\newcommand{\bm}{\mathbf}
\newcommand{\der}{\text{d}}
\newcommand{\R}{\mathbb{R}}
\newcommand{\Exp}{\text{Exp}}
\newcommand{\E}{\mathbb{E}}
\newcommand{\Var}{\text{Var}}
\newcommand{\ltot}{\lambda_{\mathrm{tot}}}
\newcommand{\gi}{g^{(i)}}
\newcommand{\Ei}{E^{(i)}}
\newcommand{\Ai}{A^{(i)}}
\newcommand{\Ji}{J^{(i)}}
\newcommand{\Jiapp}{\widetilde{J}^{(i)}}
\newcommand{\etai}[2]{\eta^{(i),\mathrm{#1}}_{#2}}
\newcommand{\hsup}[1]{H^{(#1)}}
\newcommand{\hsupa}[1]{$H^{(#1)}$}
\newcommand{\nsup}[1]{\bm{N}^{(#1)}}
\newcommand{\nsupapp}[1]{\widetilde{\bm{N}}^{(#1)}}
\newcommand{\nsupsub}[2]{N^{(#1)}_{#2}}
\newcommand{\psisup}[1]{\boldsymbol{\psi}^{(#1)}}
\newcommand{\psisupa}[1]{$\boldsymbol{\psi}^{(#1)}$}
\newcommand{\psisupsub}[2]{\psi^{(#1)}_{#2}}
\newcommand{\psisupapp}[1]{\widetilde{\boldsymbol{\psi}}^{(#1)}}
\newcommand{\psisupsubapp}[2]{\widetilde\psi^{(#1)}_{#2}}
\newcommand{\norminf}[1]{\left\lVert#1\right\rVert_\infty}
\newcommand{\normtwo}[1]{\left\lVert#1\right\rVert_2}
\newcommand{\normLinf}[2]{\left\lVert#1\right\rVert_{L^\infty #2}}
\newcommand{\normLtwoa}[2]{$\left\lVert#1\right\rVert_{L^2 #2}$}
\newcommand{\normod}[1]{\left\lVert#1\right\rVert}
\definecolor{linkgreen}{rgb}{0,0.5,0} 
\DeclareRobustCommand{\crefcomma}[1]{%
  \begingroup
    \def\sep{}%
    \textcolor{linkgreen}{\normalfont\textup(}%
    \renewcommand{\do}[1]{%
      \textcolor{linkgreen}{\normalfont\textup\sep\hyperref[##1]{\ref*{##1}}}%
      \def\sep{, }%
    }%
    \docsvlist{#1}%
    \textcolor{linkgreen}{\normalfont\textup)}%
  \endgroup
}
\setlist[enumerate]{leftmargin=.5in}
\setlist[itemize]{leftmargin=.5in}
\crefname{hypothesis}{Hypothesis}{Hypotheses}
\title{Interacting Hosts with Microbiome Exchange: An Extension of Metacommunity Theory for Discrete Interactions\thanks{Submitted to the editors DATE.
\funding{This work was funded by the National Science Foundation (through award 2124903).}}}
\author{Michael Johnson\thanks{Department of Mathematics, University of California, Los Angeles, CA 90095, USA (\email{mcjcard@math.ucla.edu}).} 
\and Mason A. Porter\thanks{Department of Mathematics and Department of Sociology, University of California, Los Angeles, CA 90095, USA; Santa Fe Institute, Santa Fe, NM 87501, USA (\email{mason@math.ucla.edu}).}
}
\begin{document}

\maketitle

\begin{abstract}
Microbiomes, which are collections of interacting microbes in an environment, often substantially impact the environmental patches or living hosts that they occupy. In microbiome models, it is important to consider both the local dynamics within an environment and exchanges of microbiomes between environments. One way to incorporate these and other interactions across multiple scales is to employ metacommunity theory. Metacommunity models commonly assume continuous microbiome dispersal between the environments in which local microbiome dynamics occur. Under this assumption, a single parameter between each pair of environments controls the dispersal rate between those environments. This metacommunity framework is well-suited to abiotic environmental patches, but it fails to capture an essential aspect of the microbiomes of living hosts, which generally do not interact continuously with each other. Instead, living hosts interact with each other in discrete time intervals. In this paper, we develop a modeling framework that encodes such discrete interactions and uses two parameters to separately control the interaction frequencies between hosts and the amount of microbiome exchange during each interaction. We derive analytical approximations of models in our framework in three parameter regimes and prove that they are accurate in those regimes. We compare these approximations to numerical simulations for an illustrative model. We demonstrate that both parameters in our modeling framework are necessary to determine microbiome dynamics. Key features of the dynamics, such as microbiome convergence across hosts, depend sensitively on the interplay between interaction frequency and strength.
\end{abstract}

\begin{keywords}
metacommunity theory, mass effects, microbiomes, mathematical ecology, networks, stochastic differential equations
\end{keywords}

\begin{MSCcodes}
92-10, 92C42, 92D40, 60J76
\end{MSCcodes}



\section{Introduction} \label{sec:Intro}

The microbiomes of humans and other animals play a critical role in their functioning and health \cite{Hou22,Valdes18,Wang17}, and there is strong evidence that a host's social interactions significantly impact their microbiome composition \cite{mice,AnimalSurvey,baboon}. Therefore, it is important to study ecological modeling frameworks that account simultaneously for microbe-scale dynamics and the effects of host interactions \cite{ADAIR17,Miller2018}.
For example, socially determined microbiome signatures are significant indicators of childhood airway development \cite{airway}, communicable-disease resistance \cite{disease_resistance}, and mental health \cite{mental_health}.


\subsection{Models of Local Ecological Dynamics} \label{sec:EMLD}

The dynamics of microbe populations are affected by environmental factors and the abundances of microbe species. A classical approach to study such populations is to analyze phenomenological dynamical-systems models \cite{Keshet}, such as the generalized Lotka--Volterra model \cite{Cui24} 
\begin{equation}
    \frac{\der N_k}{\der t} = r_k N_k + \sum_{l=1}^m \alpha_{kl} N_k N_l \,,
\label{eq:GLV}
\end{equation}
which describes the dynamics of the abundances $N_1(t), \ldots, N_m(t)$ of $m$ coexisting microbe species. Each species $k$ has an intrinsic birth rate and a death rate, which are combined into a single parameter $r_k$. A positive $r_k$ signifies that the birth rate exceeds the death rate, and a negative $r_k$ signifies that the death rate exceeds the birth rate. Each cross parameter $\alpha_{kl}$ quantifies the effect of species $l$ on the population of species $k$. A positive $\alpha_{kl}$ indicates that species $l$ is beneficial to species $k$, and a negative $\alpha_{kl}$ indicates that species $l$ is harmful to species $k$.

Researchers also employ mechanistic models, such as consumer--resource models \cite{Cui24}, to describe microbial population dynamics. One class of such models is niche models
\begin{align}\label{eq:CR}
    \frac{\der N_k}{\der t} &= N_k\, A_k(\bm{R}) \,, \\
    \frac{\der R_l}{\der t} &= B_l(\bm{R}) - \sum_{k=1}^m N_k\, C_{kl}(\bm{R}) \nonumber \,.
\end{align}
In a niche model, one tracks both the microbe species' abundances $N_1(t), N_2(t), \ldots, N_m(t)$ and the resource abundances $R_1(t), R_2(t), \ldots, R_n(t)$. The abundance $N_k(t)$ of microbe species $k$ grows or decays according to a growth rate that is a function $A_k(\bm{R})$ of the resource abundances. The resource abundance $R_l(t)$ is affected both by the resource abundances and by the consumption of the resource by microbes. The function $B_l(\bm{R})$ encodes the intrinsic dynamics of the resource abundances. The function $C_{kl}(\bm{R})$ encodes the amount of resource $l$ that species $k$ consumes per unit abundance of species $k$. Niche models capture the fact that microbes indirectly affect one another through resource competition, rather than interacting directly with each other. Niche models also allow researchers to examine environment-dependent cross-species effects.


\subsection{Metacommunity Theory}

The phenomenological and mechanistic models in \cref{sec:EMLD} assume that all microbe species exist in a single environment. Therefore, we refer to these ecological models as models of \emph{local dynamics}. The specification of local ecological dynamics provides a necessary starting point, but it is also necessary to account for interactions across different environments, which are essential to understand microbiome composition in many settings \cite{mice,AnimalSurvey,baboon}. For example, a coral reef has distinct environmental patches. Stony coral, sponges, algae, and other biotopes provide different conditions to their respective microbiomes. However, the microbiomes of these patches are not isolated and thus impact each other via microbe dispersal \cite{Sponge}. 

Researchers employ \emph{metacommunity theory}~\cite{MetaSDEC,Leibold04} 
to investigate the effects of multiple-scale interactions that include both local ecological dynamics and interactions across distinct environments. One relevant framework in metacommunity theory is the \emph{mass-effects paradigm}~\cite{KlausLV,Loreau03,Moquet02,Mouquet03,Thompson20}, in which researchers study systems with local ecological dynamics and dispersal between environmental patches. Models in this framework are often coupled differential equations of the form
\begin{equation}
    \frac{\der \nsup{i}}{\der t} = g^{(i)}(\nsup{i}) + \sum_j \sigma_{ij} \left(\nsup{j} - \nsup{i}\right) \,,
    \label{eq:meta}
\end{equation}
where $\nsup{i}(t)$ is a vector that encodes the microbe species abundances in patch $i$ at time $t$. For consumer--resource models, one can include resources as separate entries of each vector $\nsup{i}(t)$. The autonomous function $g^{(i)}$ encodes the local dynamics in patch $i$. The parameter $\sigma_{ij}$ governs the dispersal between patches $i$ and $j$.


\subsection{Our Contributions}

Mass-effects models (e.g., see \cref{eq:meta}) fail to capture an essential aspect of the microbiomes of many living hosts.
Many living hosts (such as humans) do not interact continuously \cite{Interaction} and thus do not sustain a continuous dispersal of microbes. Instead, they interact in discrete time intervals. In the present paper, we develop a framework that considers the discrete nature of host interactions. In this framework, when two hosts interact with each other, they instantaneously exchange some of their microbiomes.


\subsection{Organization of our Paper}

Our paper proceeds as follows. In \cref{sec:Model}, we describe our modeling framework for interacting microbiome hosts. In \cref{sec:LFA}, we describe the behavior of models in our framework in a regime in which hosts interact with low frequency. In \cref{sec:HFA}, we describe two distinct regimes in which hosts interact with high frequency. In \cref{sec:Num}, we present numerical experiments for our framework. In \cref{sec:Conc}, we discuss conclusions, limitations, and potential future directions of our work. In \cref{sec:LFA Proof,sec:HFLSA Proof,sec:HFCSA Proof}, we prove the accuracy of the approximations in \cref{sec:LFA,sec:HFA}. Our code is available at \url{https://github.com/mcjcard/Interacting-Hosts-with-Microbe-Exchange.git}.



\section{Our Modeling Framework} \label{sec:Model}

In \cref{tab:glossary}, we summarize the key notation that we use throughout this paper.

\def\arraystretch{1.5}
\begin{table}[htbp]
\footnotesize
\caption{Glossary of our Key Notation}\label{tab:sym}
\begin{center}
  \begin{tabular}{|c|l|} \hline
   Symbols & Definition \\ \hline
    $H$ & Node set, which is the set of microbiome hosts (i.e., nodes) \\
    \hsupa{i} & Microbiome host in $H$ \\
    $E$ & Edge set, which is the set of connections (i.e., edges)
    between hosts \\
    $\left(\hsup{i},\hsup{j}\right)$ & Edge between \hsupa{i} and \hsupa{j} \\
    $\lambda_{ij}$ & Interaction-frequency parameter between hosts \hsupa{i} and \hsupa{j} \\
    $\ltot$ & Total-interaction-frequency parameter $\left(\ltot = \sum_{i=1}^{|H|} \sum_{j=i+1}^{|H|} \lambda_{ij}\right)$ \\
    $l_{ij}$ & Relative interaction-frequency parameter $\left(l_{ij} = \frac{\lambda_{ij}}{\ltot}\right)$ \\
    $\gamma$ & Interaction strength \\
    $\nsup{i}(t)$ & Microbiome abundance vector of host \hsupa{i} \\
    $n$ & Dimension of each microbiome abundance vector $\nsup{i}(t)$ \\
    $\overline{\bm{N}}$ & Mean microbiome abundance vector $\left(\overline{\bm{N}} = \frac{1}{|H|} \sum_{j=1}^{|H|} \nsup{j}\right)$\\
    $\gi$ & Local-dynamics function of host \hsupa{i} \\
    \psisupa{i} & Basin probability vector of host \hsupa{i} \\
    $m_i$ & Dimension of the basin probability vector \psisupa{i} \\
    $\Psi$ & Basin probability tensor \\
    $\Phi^{(ij)}$ & Pairwise interaction operator between hosts \hsupa{i} and \hsupa{j}\\
    $\Phi$ & total-interaction operator \\
    $t^*$ & Frequency-scaled time $\left(t^* = \ltot t\right) $ \\
    \hline
  \end{tabular}
\end{center}
 \label{tab:glossary}
\end{table}


\subsection{Interaction Network} \label{sec:IN}

To study the microbiomes of living hosts, we consider networks that encode the interactions between these hosts. The simplest type of network is a graph $G = (H,E)$, which consists of a set $H$ of nodes and a set $E \subseteq H \times H$ of edges between nodes. See \cite{fblec,Newman} for introductions to networks. In the present paper, we use the terms ``graph" and ``network" 
interchangeably. Each node of a network represents an entity, and each edge encodes a tie between two entities. In the context of our modeling framework, we refer to each graph as an \emph{interaction network}. The nodes in the node set $H$ are microbiome hosts. The edges in the edge set $E\subseteq H\times H$ encode whether or not two hosts can interact with each other. We represent an undirected edge between two hosts $\hsup{i},\hsup{j}\in H$ by writing either $\left(\hsup{i},\hsup{j}\right)$ or $\left(\hsup{j},\hsup{i}\right)$ (which are equivalent). If an edge exists between two hosts, we say that they are \emph{adjacent}.

Each edge $e\in E$ also has an associated weight $\lambda_{e} \in \R^+$. For an edge $e = \left(\hsup{i},\hsup{j}\right)$, we equivalently write $\lambda_e$ or $\lambda_{ij}$. If there is no edge between hosts \hsupa{i} and \hsupa{j}, we set 
$\lambda_{ij} = 0$. We refer to this weight as the \emph{interaction-frequency parameter} between hosts \hsupa{i} and \hsupa{j}, as it determines the frequency of the interactions between those two hosts. The order of the hosts in indexing is arbitrary, so $\lambda_{ji} = \lambda_{ij}$. Throughout this paper, any symbol with the index order $ij$ is equivalent to that symbol with the reverse index order $ji$. In \cref{fig:Interaction Network}, we show an example of an interaction network with ten hosts. We use this network for our numerical experiments in \cref{sec:Num}.

\begin{figure}[htbp]
  \centering
  \includegraphics[scale=0.6]{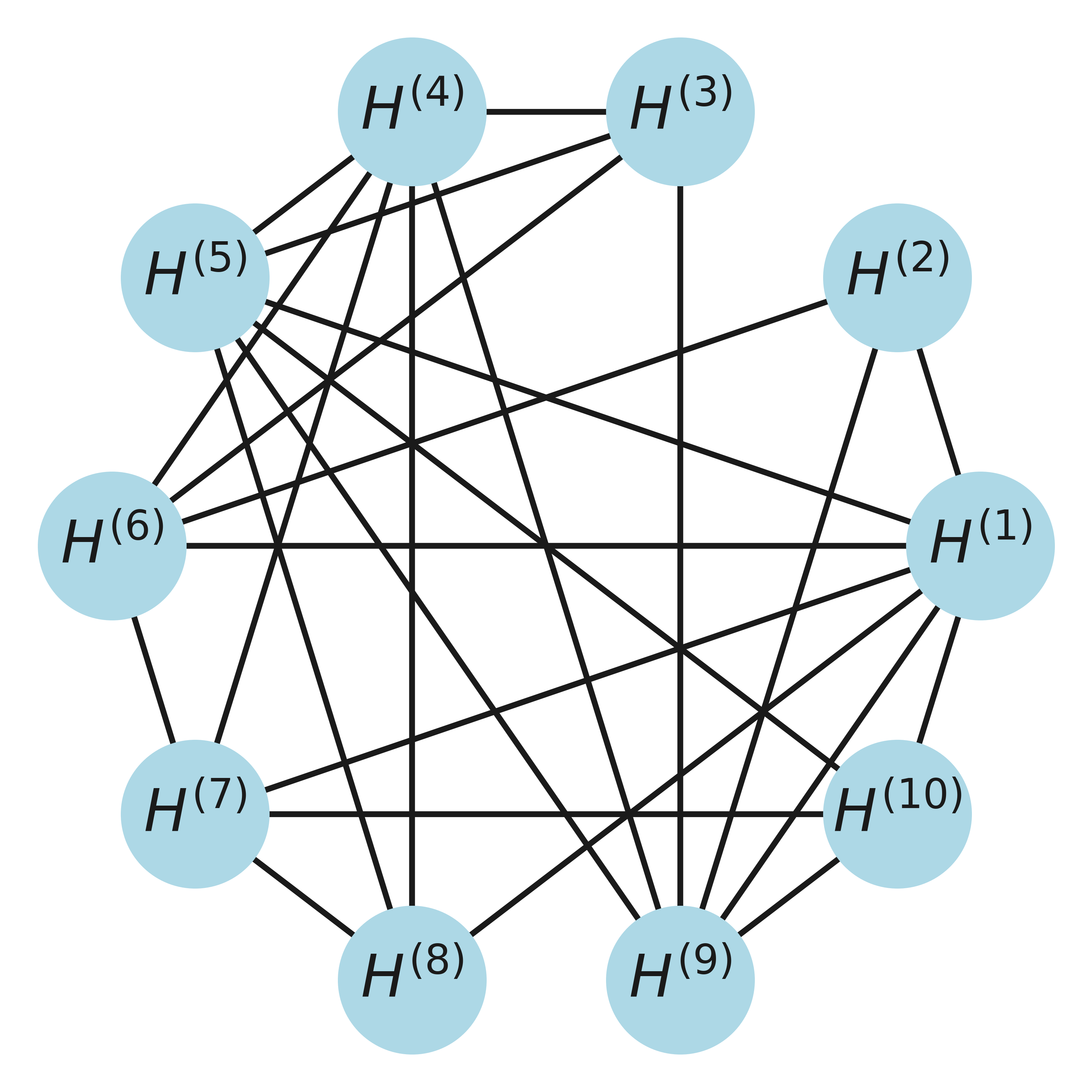}
  \caption{An example of an interaction network with 10 hosts. An edge between two hosts indicates that those two hosts can interact with each other. One can represent heterogeneous interaction-frequency parameters $\lambda_{ij}$ by using different line widths for different edges. In this example, all $\lambda_{ij}$ values are either $0$ or $1$. 
  }
  \label{fig:Interaction Network}
\end{figure}


\subsection{Exchange Dynamics}

Each host $\hsup{i}\in H$ supports a microbiome system. We encode the state of this system with a vector $\nsup{i}(t)$ of microbe species abundances. We refer to $\nsup{i}(t)$ as the \emph{microbiome abundance vector} of host \hsupa{i}. The $k$th entry $\nsupsub{i}{k}(t)$ of the microbiome abundance vector encodes the abundance of microbe species $k$ in host \hsupa{i} at time $t$. We order the microbiome abundance vector of each host so that its $k$th entry describes the same microbe species for each host. The dimension $n$ of all microbiome abundance vectors is the same. If two hosts \hsupa{i} and \hsupa{j} interact at time $t_I$, each host instantaneously exchanges a proportion $\gamma$ of its microbiome with the other host. That is,
\begin{align} \label{eq:exch}
    \nsup{i}\left(t_I^+\right) &= (1 - \gamma) \nsup{i}\left(t_I^-\right) + \gamma \nsup{j}\left(t_I^-\right) \,, \\
    \nsup{j}\left(t_I^+\right) &= (1 - \gamma) \nsup{j}\left(t_I^-\right) + \gamma \nsup{i}\left(t_I^-\right) \,, \nonumber
\end{align}
where the parameter $\gamma$ governs the strength of the interaction. At an interaction time $t_I$, the microbiome abundance vector of each host \hsupa{i} satisfies $\nsup{i}\left(t_I\right) = \nsup{i}\left(t_I^+\right)$. For simplicity, we use the same value of the \emph{interaction strength} $\gamma$ for each pair of hosts. We model the time between consecutive interactions for a pair of adjacent hosts \hsupa{i} and \hsupa{j} as an exponentially distributed random variable $X_{ij} \sim \Exp(\lambda_{ij})$. 

For convenience, we review relevant background information about exponential distributions. For further details, see \cite{Feller1950-fh}. The probability density function $f_{ij}$ for an exponential distribution with parameter $\lambda_{ij}$ is
\begin{equation} \label{eq:int dist}
    f_{ij}(t) = \lambda_{ij} e^{-\lambda_{ij} t} \,.
\end{equation}

An exponentially distributed random variable $X_{ij}$ is memoryless. No matter how much time passes after the most recent interaction between hosts \hsupa{i} and \hsupa{j}, the time that remains until the next interaction is distributed as $X_{ij}$. That is,
\begin{equation} \label{eq:mem}
    \Pr(X_{ij} > t + s \mid X_{ij} > s) = \Pr(X_{ij} > t) \,.
\end{equation}
Because $X_{ij}$ is memoryless, it is easy to describe the random variable
\begin{equation}
    X = \min_{i,j > i} \{X_{ij}\} \,,
\end{equation}
for the time until the next interaction between any pair of hosts. The random variable $X$ is also exponentially distributed: $X \sim \Exp(\ltot)$, where
\begin{equation} \label{eq:ltot}
    \ltot = \sum_{i = 1}^{|H|} \sum_{j = i + 1}^{|H|} \lambda_{ij}
\end{equation}
is the \emph{total-interaction-frequency parameter}. The probability that a given interaction is between a specified pair, \hsupa{i} and \hsupa{j}, of hosts interacts is the \emph{relative interaction-frequency parameter}
\begin{equation} \label{eq:rifp}
    l_{ij} = \Pr(X_{ij} = X) = \frac{\lambda_{ij}}{\ltot} \,.
\end{equation}


\subsection{Local Dynamics} \label{sec:LD}

Between interactions, an autonomous local dynamical system 
\begin{equation}
    \frac{\der \nsup{i}}{\der t} = g^{(i)}\left(\nsup{i}\right)
    \label{eq:loc dyn} 
\end{equation}
governs the time evolution of the microbiome abundance vector of each host \hsupa{i}. This dynamical system encodes the local dynamics of host \hsupa{i}. We refer to the function $\gi$ as the \emph{local-dynamics function} of host \hsupa{i}.

Let the flow $\bm{X}^{(i)}(t,\bm{x)}$ be the solution of
\begin{align}
    \frac{\partial \bm{X}^{(i)}}{\partial t}(t,\bm{x}) &= \gi\left(\bm{X}^{(i)}(t,\bm{x})\right) \,, \\
    \bm{X}^{(i)}(0,\bm{x}) &= \bm{x} \,. \nonumber
\end{align}
For each $\gi$, we require that each element of every valid flow is always finite and nonnegative. Specifically, there is a constant $M \in R^+$ such that $\bm{x} \in [0,M]^n$ implies that each flow $\bm{X}^{(i)}(t,\bm{x}) \in [0,M]^n$ for all times $t \geq 0$. 
When this condition holds, we say each $g^{(i)}$ is \emph{bounded}. This is a reasonable assumption for microbiome systems because abundances cannot increase without bound or become negative. 

When all $\gi$ are bounded, local dynamics cannot cause any microbiome abundance vector $\nsup{i}(t)$ to leave the region $[0,M]^n$. Interactions also cannot cause any microbiome abundance vector to leave this region. Consider an interaction at time $t_I$ between hosts \hsupa{i} and \hsupa{j} with microbiome abundance vectors that satisfy $\nsup{i}\left(t_I^-\right) \in [0,M]^n$ and $\nsup{j}\left(t_I^-\right) \in [0,M]^n$. The microbiome exchange between these hosts is an averaging process. After the interaction, the microbiome abundance vectors satisfy $\nsup{i}\left(t_I^+\right) \in [0,M]^n$ and $\nsup{j}\left(t_I^+\right) \in [0,M]^n$. Therefore, if each $\gi$ is bounded and each $\nsup{i}(0) \in [0,M]^n$, it follows that each $\nsup{i}(t) \in [0,M]^n$ for all times $t \geq 0$.

We now present an illustrative model of local dynamics that we use repeatedly to illustrate our framework. For this illustrative model, we assume that each host sustains two microbe species. Therefore, each microbiome abundance vector has dimension two. We use the dynamical system
\begin{align} \label{eq:toy}
    \frac{\der \nsupsub{i}{1}}{\der t} &= -\frac{\nsupsub{i}{1}}{10} \left(\nsupsub{i}{1} - 2\right) \left(\nsupsub{i}{1} - 8\right) \left(\nsupsub{i}{1} - 12\right) \,, \\
    \frac{\der \nsupsub{i}{2}}{\der t} &= -\frac{\nsupsub{i}{2}}{10} \left(\nsupsub{i}{2} - 2\right) \left(\nsupsub{i}{2} - 11\right) \left(\nsupsub{i}{2} - 12\right) \nonumber
\end{align}
for the local dynamics of each host. In \cref{fig:Basins}, we show this dynamical system's four stable equilibrium points and their associated basins of attraction. We use the labels $1$, $2$, $3$, and $4$ for the basins of attraction of the attractors $(2,2)$, $(12,2)$, $(2,12)$, and $(12,12)$, respectively.

\begin{figure}[htbp]
  \centering
  \includegraphics[scale=0.6]{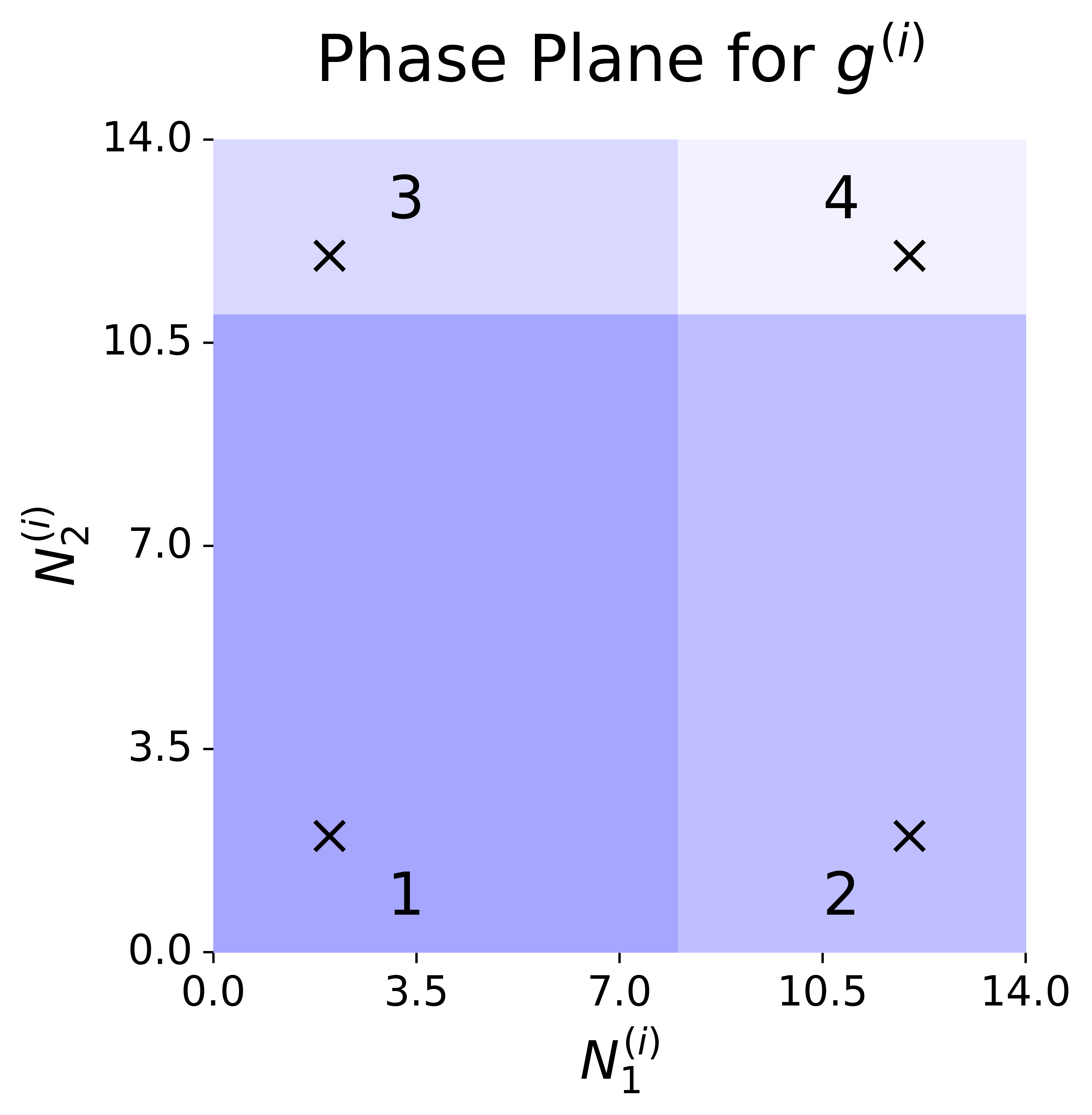}
  \caption{The four stable equilibrium points for the illustrative model \cref{eq:toy} of local dynamics and their basins of attraction. We use the labels $1$, $2$, $3$, and $4$ for the basins of attraction of the attractors $(2,2)$, $(12,2)$, $(2,12)$, and $(12,12)$, respectively.}
  \label{fig:Basins}
\end{figure}

Consider an interaction network with two hosts \hsupa{1} and \hsupa{2} that are connected by a single edge. Suppose that there is an interaction between the two hosts at time $t_I$ and that $\nsup{1}(t^-_I) = (2,2)$ and $\nsup{2}(t^-_I) = (12,12)$. In \cref{fig:Exchanges}, we show the states of the two hosts immediately after interacting for three values of the interaction strength $\gamma$. This figure demonstrates that a single interaction can change the basin of attraction of a host's microbiome abundance vector for sufficiently large $\gamma$.

\begin{figure}[htbp]
  \centering
  \includegraphics[scale=0.47]{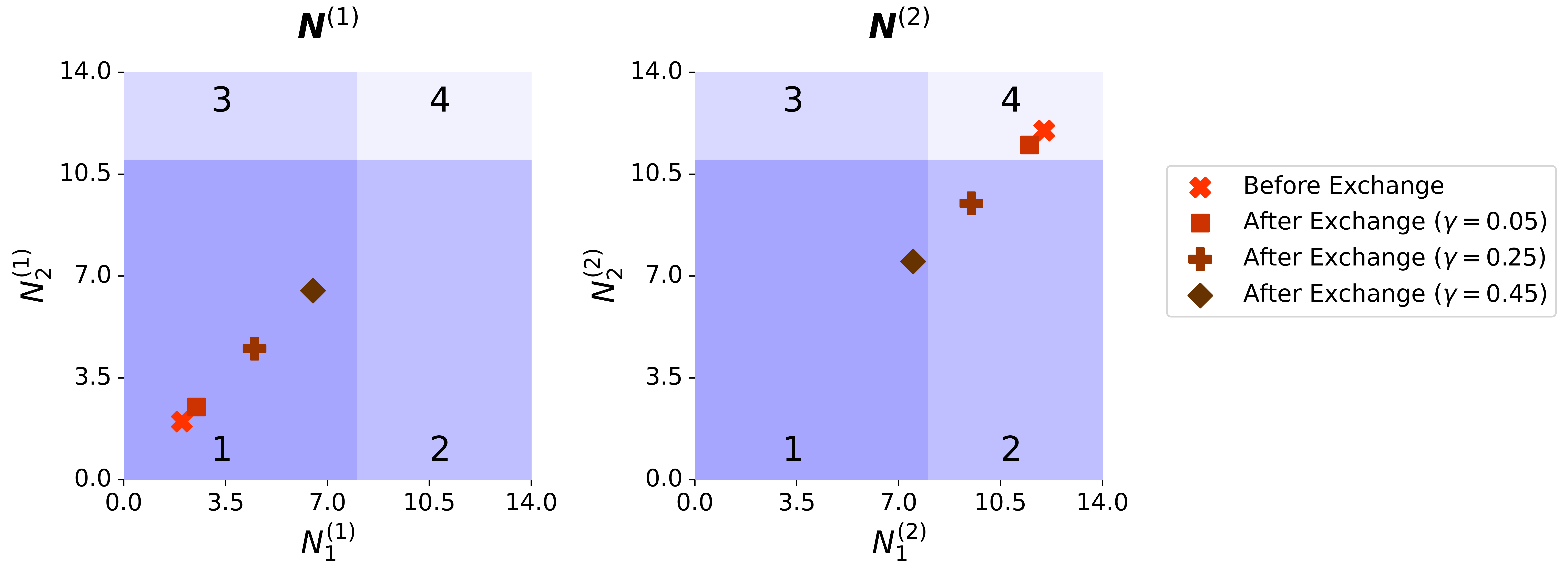}
  \caption{Two hosts with local dynamics \cref{eq:toy}. Immediately before interacting at time $t_I$, the hosts have microbiome abundance vectors $\nsup{1}\left(t_I^-\right) = (2,2)$ and $\nsup{2}\left(t_I^-\right) = (12,12)$. We show the microbiome abundance vectors $\nsup{1}\left(t_I^+\right)$ and $\nsup{2}\left(t_I^+\right)$ of the two hosts immediately after interacting for interaction strengths $\gamma = 0.05$, $\gamma = 0.25$, and $\gamma = 0.45$.
  }
  \label{fig:Exchanges}
\end{figure}

If interactions occur in sufficiently quick succession, then smaller values of $\gamma$ can also cause transitions in the basin of attraction of a host's microbiome abundance vector. In \cref{fig:Rep Int}, we show an example of this phenomenon. We begin with microbiome abundance vectors $\nsup{1}(0) = (2,2)$ and $\nsup{2}(0) = (12,12)$, and we track the microbiome abundance vectors through five interactions between the hosts. These interactions occur at times $0.1$, $0.3$, $0.4$, $0.7$, and $0.73$. In this example, the interaction strength is $\gamma = 0.32$. The first interaction is sufficient to move $\nsup{2}(t)$ from basin $4$ to basin $2$. However, for $\nsup{2}(t)$ to move from basin $2$ to basin $1$, two interactions must occur in sufficiently quick succession. In \cref{fig:Rep Int}, we see that interactions that occur at times 0.3 and 0.4 do not cause this transition. However, interactions at times 0.7 and 0.73 are close enough in time to cause $\nsup{2}(t)$ to move from basin $2$ to basin $1$.

\begin{figure}[htbp]
  \centering
  \includegraphics[scale=0.45]{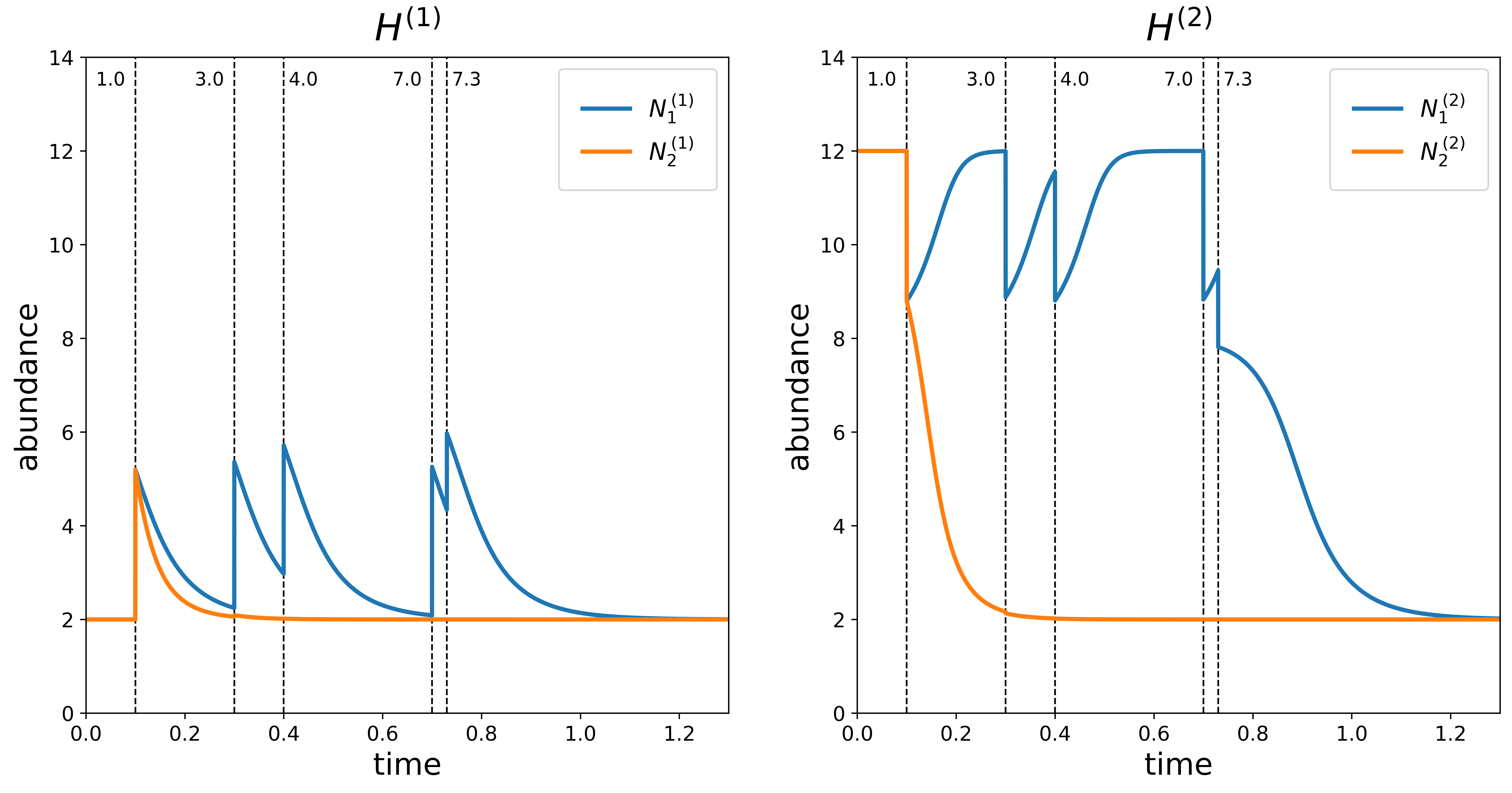}
  \caption{Two hosts with local dynamics \cref{eq:toy}. These hosts have initial states $\nsup{1}(0) = (2,2)$ and $\nsup{2}(0) = (12,12)$. We show the abundances of microbe species $1$ and $2$ in each host through the course of five interactions at times $0.1$, $0.3$, $0.4$, $0.7$, and $0.73$.}
  \label{fig:Rep Int}
\end{figure}



\section{Low-Frequency Approximation (LFA)} \label{sec:LFA}

In this section, we discuss an approximation that is accurate when all interaction-frequency parameters $\lambda_{ij}$ are sufficiently small. We develop this \emph{low-frequency approximation} (LFA) for systems in which the set of stable attractors of each host's local dynamics is a finite set of equilibrium points. We believe that it is possible to derive extensions of the LFA for systems with other types of attractors, and we discuss this possibility in \cref{sec:FW}. We define relevant terminology in \cref{sec:BST,sec:IO}, and we describe the LFA and outline the proof of its accuracy in \cref{sec:LFA theo}. We give a complete proof in \cref{sec:LFA Proof}.


\subsection{Basin State Tensor} \label{sec:BST}

We illustrated in \cref{fig:Exchanges} that interactions can result in transitions of the basin of attraction of a host's microbiome abundance vector. Local dynamics cannot cause such a transition to occur, so interactions between hosts are necessary for such transitions.

Throughout the rest of \cref{sec:LFA}, we need to be able to track the basin of attraction of a host's microbiome abundance vector. To do this, we define a \emph{basin probability vector} $\psisup{i}(t)$ for each host \hsupa{i}. An entry $\psisupsub{i}{a}(t)$ of this vector gives the probability that the microbiome abundance vector of host \hsupa{i} is in basin of attraction $a$ at time $t$. If the local dynamics of host \hsupa{i} has $m_i$ basins of attraction, then $\psisup{i}(t)$ has dimension $m_i$. Different hosts can have different local dynamics, so the basin probability vectors of different hosts can have different dimensions. 

Each local-dynamics function $\gi$ is bounded, as described in \cref{sec:LD}. Therefore, for all times $t \geq 0$, each microbiome abundance vector $\nsup{i}(t) \in [0,M]^n$. Because the set of stable attractors of each host's local dynamics consists of a finite set of equilibrium points, the set of points $\mathcal{U}^{(i)} \subset [0,M]^n$ that are not in the basin of attraction of some equilibrium point has measure $0$. For the LFA to be accurate, we require specific conditions on the local-dynamics functions $\gi$ and the interaction strength $\gamma$. We describe these conditions in the Low-Frequency Approximation Theorem (see \cref{thm:LFA}). When these conditions on $\gi$ and $\gamma$ are satisfied and the total-interaction-frequency parameter $\ltot \to 0$, no microbiome abundance vector $\nsup{i}(t)$ lies on the border between basins of attraction at any time $t$ in a finite interval $[0,T]$ with arbitrarily high probability.

We represent the state of the entire set of hosts using a \emph{basin probability tensor} $\Psi(t)$. If each $\psisup{i}(t)$ has dimension $m_i$, then $\Psi(t)$ has dimension $m_1\times m_2 \times \cdots \times m_{|H|}$. An entry of the basin probability tensor $\Psi_{a_1,a_2,\ldots,a_{|H|}}(t)$ gives the probability that the microbiome abundance vector of each host \hsupa{i} is in basin of attraction $a_i$ at time $t$. If we assume that these probabilities are independent at time $t$, then
\begin{equation} \label{eq:tens prod}
    \Psi(t) = \bigotimes_i \psisup{i}(t) \,.
\end{equation}
Typically, the $\psisup{i}(t)$ are not independent after any interaction, and then \cref{eq:tens prod} no longer holds.


\subsection{Interaction Operators} \label{sec:IO}

The basin probability tensor $\Psi(t)$ can change only due to an interaction. Unfortunately, knowing only $\Psi(t)$ before an interaction and which pair of hosts interacted is insufficient to determine $\Psi(t)$ after the interaction. One also needs information about the microbiome abundance vectors of the interacting hosts.

Suppose that an interaction occurs between hosts \hsupa{1} and \hsupa{2} at time $t_I$ and that their microbiome abundance vectors $\nsup{1}\left(t_I^-\right)$ and $\nsup{2}\left(t_I^-\right)$ are at stable equilibrium points immediately before the interaction. We make this assumption throughout the rest of this section. If we know that the basins of attraction of $\nsup{1}\left(t_I^-\right)$ and $\nsup{2}\left(t_I^-\right)$ are $a_1$ and $a_2$, respectively, then we are able to determine the basins of attraction of $\nsup{1}\left(t_I^+\right)$ and $\nsup{2}\left(t_I^+\right)$. We illustrate this in \cref{fig:All Ex} for an example in which both hosts have local dynamics \cref{eq:toy} and the interaction strength is $\gamma = 0.25$. We show $\nsup{1}\left(t_I^+\right)$ for every possible combination of $\nsup{1}\left(t_I^-\right)$ and $\nsup{2}\left(t_I^-\right)$. Because $\nsup{1}\left(t_I^-\right)$ and $\nsup{2}\left(t_I^-\right)$ are at stable equilibrium points (by assumption), there are 16 such combinations. For example, for $\nsup{1}\left(t_I^-\right) = (12,12)$, there are four possible values of $\nsup{1}\left(t_I^+\right)$. The values are $(9.5,9.5)$, $(12,9.5)$, $(9.5,12)$, and $(12,12)$; there are four corresponding values ($(2,2)$, $(12,2)$, $(2,12)$, and $(12,12)$) of $\nsup{2}\left(t_I^-\right)$. In \cref{fig:All Ex}, we mark these four possible values of $\nsup{1}\left(t_I^+\right)$ with the diamonds in the upper-right corner. Because host \hsupa{2} has the same local dynamics as host \hsupa{1}, the situation is identical for $\nsup{2}\left(t_I^+\right)$, except that we exchange the indices $1$ and $2$ everywhere.

\begin{figure}[htbp]
  \centering
  \includegraphics[scale=0.6]{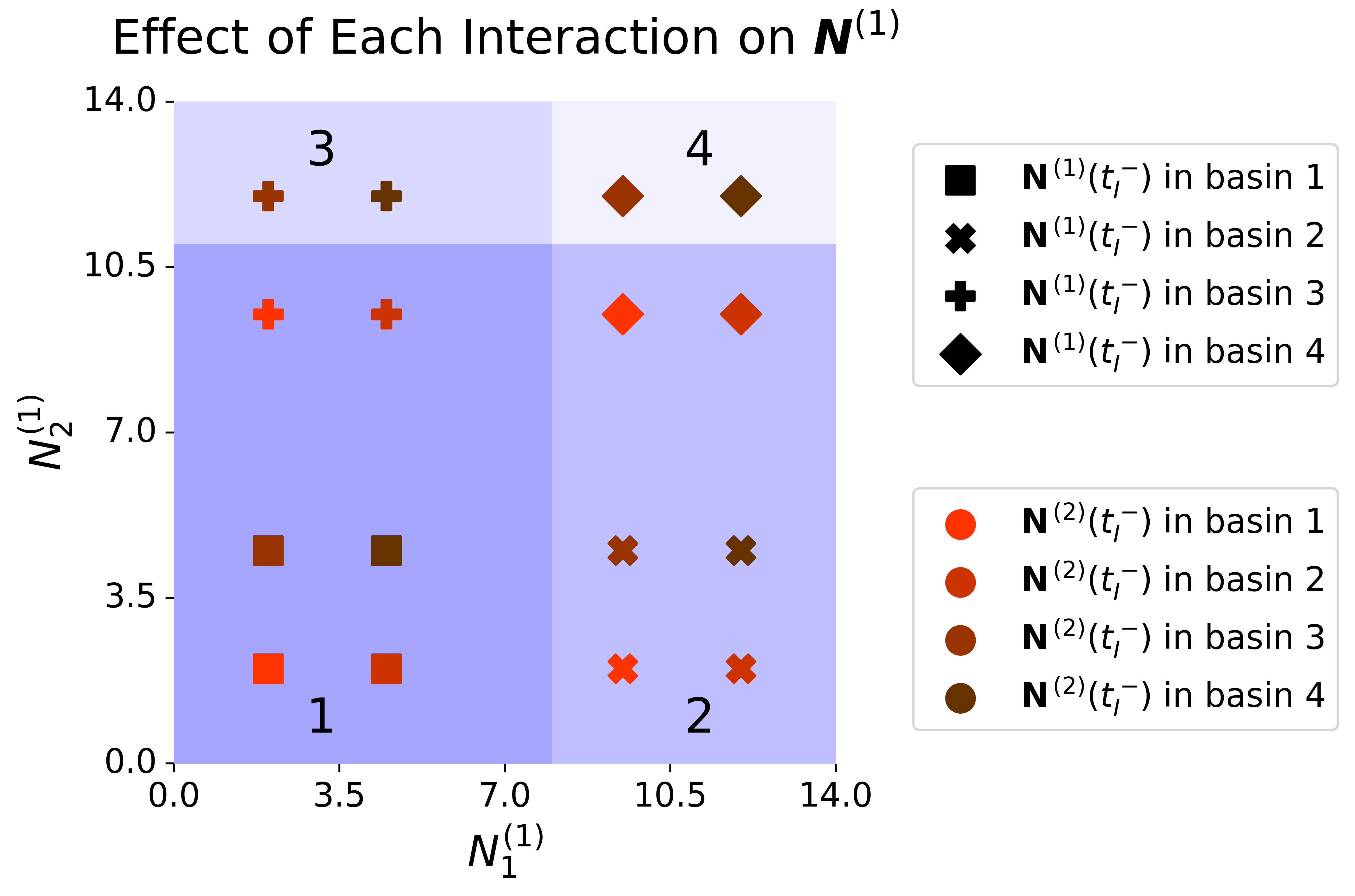}
  \caption{Each possible microbiome abundance vector {$\nsup{1}\left(t_I^+\right)$} after an interaction at time $t_I$ between hosts \hsupa{1} and \hsupa{2} with local dynamics \cref{eq:toy}, assuming that $\nsup{1}\left(t_I^-\right)$ and $\nsup{2}\left(t_I^-\right)$ are at stable equilibrium points before the interaction. The marker shapes indicate the basins of attraction of $\nsup{1}\left(t_I^-\right)$, and the marker colors indicate the basins of attraction of $\nsup{2}\left(t_I^-\right)$. For example, for $\nsup{1}\left(t_I^-\right) = (12,12)$, there are four possible values of $\nsup{1}\left(t_I^+\right)$. These values are $(9.5,9.5)$, $(12,9.5)$, $(9.5,12)$, and $(12,12)$; there are four corresponding values ($(2,2)$, $(12,2)$, $(2,12)$, and $(12,12)$) of $\nsup{2}\left(t_I^-\right)$. We mark these four possible values of $\nsup{1}\left(t_I^+\right)$ with the diamonds in the upper-right corner. The color indicates the corresponding value of $\nsup{2}\left(t_I^-\right)$.
  }
  \label{fig:All Ex}
\end{figure}

For some values of the interaction strength $\gamma$, an interaction between hosts \hsupa{i} and \hsupa{j} can result in either $\nsup{i}\left(t_I^+\right)$ or $\nsup{j}\left(t_I^+\right)$ lying on a boundary between basins of attraction, rather than inside a basin of attraction. That is, after such an interaction, we have $\nsup{i}\left(t_I^+\right) \in \mathcal{U}^{(i)}$ or $\nsup{j}\left(t_I^+\right) \in \mathcal{U}^{(j)}$. We refer to the set of such interaction strengths as the \emph{boundary set} $\mathcal{B}_{ij}$ for the hosts \hsupa{i} and \hsupa{j}. For the example in \cref{fig:All Ex}, the boundary set is $\mathcal{B}_{12} = \{0.1,0.4\}$. If hosts \hsupa{i} and \hsupa{j} cannot interact, then $\mathcal{B}_{ij}$ is the empty set. The \emph{total boundary set} is
\begin{equation}
    \mathcal{B} = \bigcup_{i,j} \mathcal{B}_{ij} \,.
\end{equation}
For the LFA, we require $\gamma \not\in \mathcal{B}$, and we assume that this is the case for the rest of this section.

Under our assumptions, we need to know only the basins of attraction of $\nsup{i}\left(t_I^-\right)$ and $\nsup{j}\left(t_I^-\right)$ (i.e., before an interaction) to determine the basins of attraction of $\nsup{i}\left(t_I^+\right)$ and $\nsup{j}\left(t_I^+\right)$ (i.e., after the interaction). Therefore, we need to know only the basin probability tensor $\Psi\left(t_I^-\right)$ prior to an interaction to determine the basin probability tensor $\Psi\left(t_I^+\right)$ after that interaction. To describe such an interaction-induced change, we define a \emph{pairwise interaction operator} $\Phi^{(ij)}$ with dimension $m_1\times \cdots \times m_{|H|}\times m_1 \times \cdots \times m_{|H|}$. Its entry $\Phi^{(ij)}_{b_1,\ldots,b_{|H|},a_1,\ldots,a_{|H|}} = 1$ if $b_i$ and $b_j$ are the basins of attraction of 
$\nsup{i}\left(t_I^+\right)$ and $\nsup{j}\left(t_I^+\right)$, respectively, when $a_i$ and $a_j$ are the respective basins of attraction of $\nsup{i}\left(t_I^-\right)$ and $\nsup{j}\left(t_I^-\right)$ and $b_k = a_k$ for all $k \not\in \{i,j\}$. Otherwise, $\Phi^{(ij)}_{b_1,\ldots,b_{|H|},a_1,\ldots,a_{|H|}} = 0$. 

Consider a two-host interaction network in which both hosts have local dynamics \cref{eq:toy} and the interaction strength is $\gamma = 0.25$. The basin probability tensor of this system has dimension $4\times 4$. Therefore, the pairwise interaction operator $\Phi^{(12)}$ has dimension $4 \times 4 \times 4 \times 4$. In \cref{fig:All Ex}, we show the results of all possible interactions between these two hosts. For example, if $\nsup{1}\left(t_I^-\right)$ and $\nsup{2}\left(t_I^-\right)$ are in basins of attraction $1$ and $4$, respectively, then the resulting basins of attraction of $\nsup{1}\left(t_I^+\right)$ and $\nsup{2}\left(t_I^+\right)$ are $1$ and $2$, respectively. Therefore, $\Phi^{(12)}_{1,2,1,4} = 1$. Because there are 16 possible combinations of $\nsup{1}\left(t_I^-\right)$ and $\nsup{2}\left(t_I^-\right)$, there are 16 entries of $\Phi^{(12)}$ that equal $1$. We list these 16 entries in \cref{fig:Int Op Entries}.

\begin{figure}[htbp]
  \centering
  \def\arraystretch{2.5}
  \begin{tabular} {c c c c}
       $\Phi^{(12)}_{1,1,1,1}$  &  $\Phi^{(12)}_{1,2,1,2}$ & $\Phi^{(12)}_{1,1,1,3}$ & $\Phi^{(12)}_{1,2,1,4}$ \\
       $\Phi^{(12)}_{2,1,2,1}$  &  $\Phi^{(12)}_{2,2,2,2}$ & $\Phi^{(12)}_{2,1,2,3}$ & $\Phi^{(12)}_{2,2,2,4}$ \\
       $\Phi^{(12)}_{1,1,3,1}$  &  $\Phi^{(12)}_{1,2,3,2}$ & $\Phi^{(12)}_{3,3,3,3}$ & $\Phi^{(12)}_{3,4,3,4}$ \\
       $\Phi^{(12)}_{2,1,4,1}$  &  $\Phi^{(12)}_{2,2,4,2}$ & $\Phi^{(12)}_{4,3,4,3}$ & $\Phi^{(12)}_{4,4,4,4}$
    \end{tabular}
  \caption{The 16 entries of the pairwise interaction operator that equal $1$ for a two-host interaction network in which both hosts have local dynamics \cref{eq:toy} and the interaction strength is $\gamma = 0.25$.}
  \label{fig:Int Op Entries}
\end{figure}

We use the Einstein summation convention~\cite{einstein} to describe how $\Phi^{(ij)}$ operates on the basin probability tensor. In this convention, one sums over any repeated index that occurs in a single term. For example, we describe the product 
\begin{equation}
    \bm{y} = A \bm{x}
\end{equation}
of a matrix $A$ and a vector $\bm{x}$ by writing
\begin{equation}
    y_i = \sum_j A_{ij} x_j  \,.
\end{equation}
Using the Einstein summation convention, we write
\begin{equation}
    y_i = A_{ij} x_j \,.
\end{equation}
We write the effect of the pairwise interaction operator $\Phi^{(ij)}$ on the basin probability tensor $\Psi$ as
\begin{equation}
    \Psi_{b_1,\ldots,b_{|H|}}\left(t_I^+\right) = \Phi^{(ij)}_{b_1,\ldots,b_{|H|},a_1,\ldots,a_{|H|}} \Psi_{a_1,\ldots, a_{|H|}}\left(t_I^-\right) \,.
\end{equation}

Each $\Phi^{(ij)}$ is a linear operator on the basin probability tensor. At any time, the next interaction is between hosts \hsupa{i} and \hsupa{j} with probability $l_{ij}$ (see \cref{eq:rifp}). This yields the \emph{total-interaction operator}
\begin{equation} \label{eq:Phi}
    \Phi = \sum_{i = 1}^{|H|} \sum_{j = i + 1}^{|H|} l_{ij} \Phi^{(ij)} \,.
\end{equation}

If we now assume that an interaction occurs between some pair of hosts at time $t_I$ (with probabilities $l_{ij}$ for each pair) and that all $\nsup{i}\left(t_I^-\right)$ are at a stable equilibrium point, then we obtain
\begin{equation}
    \Psi_{b_1,\ldots,b_{|H|}}\left(t_I^+\right) = \Phi_{b_1,\ldots,b_{|H|},a_1,\ldots,a_{|H|}} \Psi_{a_1,\ldots, a_{|H|}}\left(t_I^-\right) \,.
\end{equation}


\subsection{Low-Frequency-Approximation Theorem} \label{sec:LFA theo}

The LFA encodes the evolution of the system \crefcomma{eq:exch,eq:loc dyn} when the local dynamics of each host is much faster than the exchange dynamics between hosts. In this regime, each microbiome abundance vector $\nsup{i}(t)$ becomes close to a stable equilibrium point before the next interaction that involves host \hsupa{i}. Therefore, the total-interaction operator $\Phi$ accurately describes the dynamics of the basin probability tensor $\Psi(t)$.

Before we state the LFA Theorem, we introduce some helpful terminology. The expected number of host interactions in a time interval of duration $\Delta t$ is $\ltot\Delta t$. We refer to $t^* = \ltot t$ as the \emph{frequency-scaled time}. We say that the local-dynamics function $\gi$ is \emph{inward pointing} at a point $\bm{x}$ if there exists a constant $\delta > 0$ such that $\normtwo{\bm{y} - \bm{x}} \leq \delta$ implies that $\gi(\bm{y}) \cdot (\bm{x} - \bm{y}) > 0$.

\begin{theorem}[Low-Frequency-Approximation Theorem] \label{thm:LFA}
  Suppose that the attractors of each host's local dynamics consist of a finite set of stable equilibrium points at which the local-dynamics function $\gi$ is inward pointing, and let each $g^{(i)}$ be continuous and bounded (see \cref{sec:LD}). Fix $\gamma \not\in \mathcal{B}$, all $l_{ij}$, and a frequency-scaled time $T^*$. As $\lambda_{\mathrm{tot}} \to 0$, the basin probability tensor $\Psi(t^*)$ converges uniformly to $\widetilde{\Psi}(t^*)$ on $[0,T^*]$, where
  \begin{align}\label{eq:LFA}
      \frac{\der}{\der t^*}\widetilde\Psi_{b_1,\ldots,b_{|H|}} (t^*) &= \Phi_{b_1,\ldots,b_{|H|},a_1,\ldots,a_{|H|}} \widetilde\Psi_{a_1,\ldots, a_{|H|}}(t^*) - \widetilde\Psi_{b_1,\ldots,b_{|H|}}(t^*) \,, \\
      \widetilde{\Psi}(0) &= \Psi(0) \,. \nonumber
  \end{align}
\end{theorem}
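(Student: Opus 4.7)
The plan is to compare the true basin probability tensor $\Psi(t^*)$ to the continuous-time Markov chain on the finite set of basin tuples whose forward master equation is exactly \cref{eq:LFA}: this chain jumps at rate $1$ in $t^*$-time and applies the transition operator $\Phi$ at each jump. The strategy is to couple the true process to this chain and show that their basin-tuple trajectories agree on a high-probability event as $\ltot\to 0$.

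First, I would establish a structural lemma. Because $\gamma \notin \mathcal{B}$, each $\gi$ is continuous, and inward-pointing at each equilibrium implies that every basin of attraction is open, there exists $\delta>0$ such that for every host pair $(i,j)$ and every pair of basins $(a_i,a_j)$, whenever $\bm{x}$ is within $\delta$ of the equilibrium $p_{a_i}$ and $\bm{y}$ is within $\delta$ of $p_{a_j}$, the two post-interaction points $(1-\gamma)\bm{x}+\gamma\bm{y}$ and $(1-\gamma)\bm{y}+\gamma\bm{x}$ lie strictly inside the basins prescribed by $\Phi^{(ij)}$. Next, using the inward-pointing condition together with boundedness and compactness of the finite set of possible post-interaction starting points produced by the structural lemma, I would extract a uniform convergence time $\tau_\delta$ such that the local-dynamics flow $\bm{X}^{(i)}(\tau_\delta,\bm{x})$ lies within $\delta$ of the appropriate equilibrium whenever $\bm{x}$ is one of these post-interaction points; a possibly larger threshold handles the initial condition $\nsup{i}(0)$.

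With these constants fixed, define the good event $G$ as the event that every inter-interaction gap on the $t$-time window $[0,T^*/\ltot]$ exceeds $\tau_\delta$. The number of interactions in this window is Poisson with mean $T^*$ and each gap is exponentially distributed with rate $\ltot$, so a union bound gives $\Pr(G^c) = O(T^*\ltot\tau_\delta) \to 0$ as $\ltot\to 0$. On $G$, an induction on the interaction index shows that at every interaction time $t_I$ each participating host satisfies $\normtwo{\nsup{i}(t_I^-)-p_{a_i}}<\delta$, so by the structural lemma the post-interaction basins are exactly those assigned by $\Phi^{(ij)}$; between interactions, the basin of each host is preserved by the local flow. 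Hence, conditional on $G$, the basin-tuple trajectory of the true process is sample-path-identical to a realization of the Markov chain governed by \cref{eq:LFA}. Since $\Psi(t^*)$ and $\widetilde\Psi(t^*)$ are both probability tensors, the elementary bound $\norminf{\Psi(t^*)-\widetilde\Psi(t^*)} \leq 2\Pr(G^c)$ is uniform in $t^*\in[0,T^*]$ and tends to $0$.

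The main obstacle will be the second step: producing a finite, uniform convergence time $\tau_\delta$ from the purely local inward-pointing hypothesis. Because inward-pointing controls only a small neighborhood of each equilibrium, one must first use it to obtain a uniform absorption time once trajectories enter such a neighborhood, then patch this with trajectory-wise convergence within each basin via a compactness argument over the finite family of post-interaction starting geometries. The treatment of the very first inter-interaction interval, whose initial condition is not produced by any interaction, requires a separate compactness estimate on the support of $\nsup{i}(0)$ and may force a longer threshold time on that interval alone, which is harmless since it only changes the implicit constant in the bound on $\Pr(G^c)$.
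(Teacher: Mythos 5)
Your proposal is correct and follows essentially the same route as the paper's proof: establish a uniform $\delta$-neighborhood lemma so that interactions between near-equilibrium states realize exactly the transitions encoded by $\Phi$, extract a uniform reconvergence time $\tau$ by compactness and the inward-pointing condition, and show that the event that some inter-interaction gap is shorter than $\tau$ has probability $O(T^*\ltot\tau)\to 0$, on whose complement the basin-tuple trajectory coincides with the jump chain whose law is $\widetilde\Psi$. Your explicit treatment of the first interval (the initial condition not being a post-interaction point) is a detail the paper's own proof passes over, but it changes nothing substantive.
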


We provide key steps of the proof of \cref{thm:LFA} in this section. We give a full proof in \cref{sec:LFA Proof}.

As we described in \cref{sec:IO}, if each $\nsup{i}\left(t_I^-\right)$ is at a stable equilibrium point, then the interaction operator $\Phi$ describes the update of the basin probability tensor after an interaction at time $t_I$. We can construct neighborhoods around each stable equilibrium point of each host's local dynamics such that if each $\nsup{i}\left(t_I^-\right)$ is in one of these neighborhoods, then $\Phi$ perfectly describes the transition of $\Psi(t)$ due to an interaction at time $t_I$. In \cref{fig:Basins with Balls}, we show an example of what these neighborhoods can look like around the stable equilibrium points for our illustrative model \cref{eq:toy} of local dynamics.

\begin{figure}[htbp]
  \centering
  \includegraphics[scale=0.6]{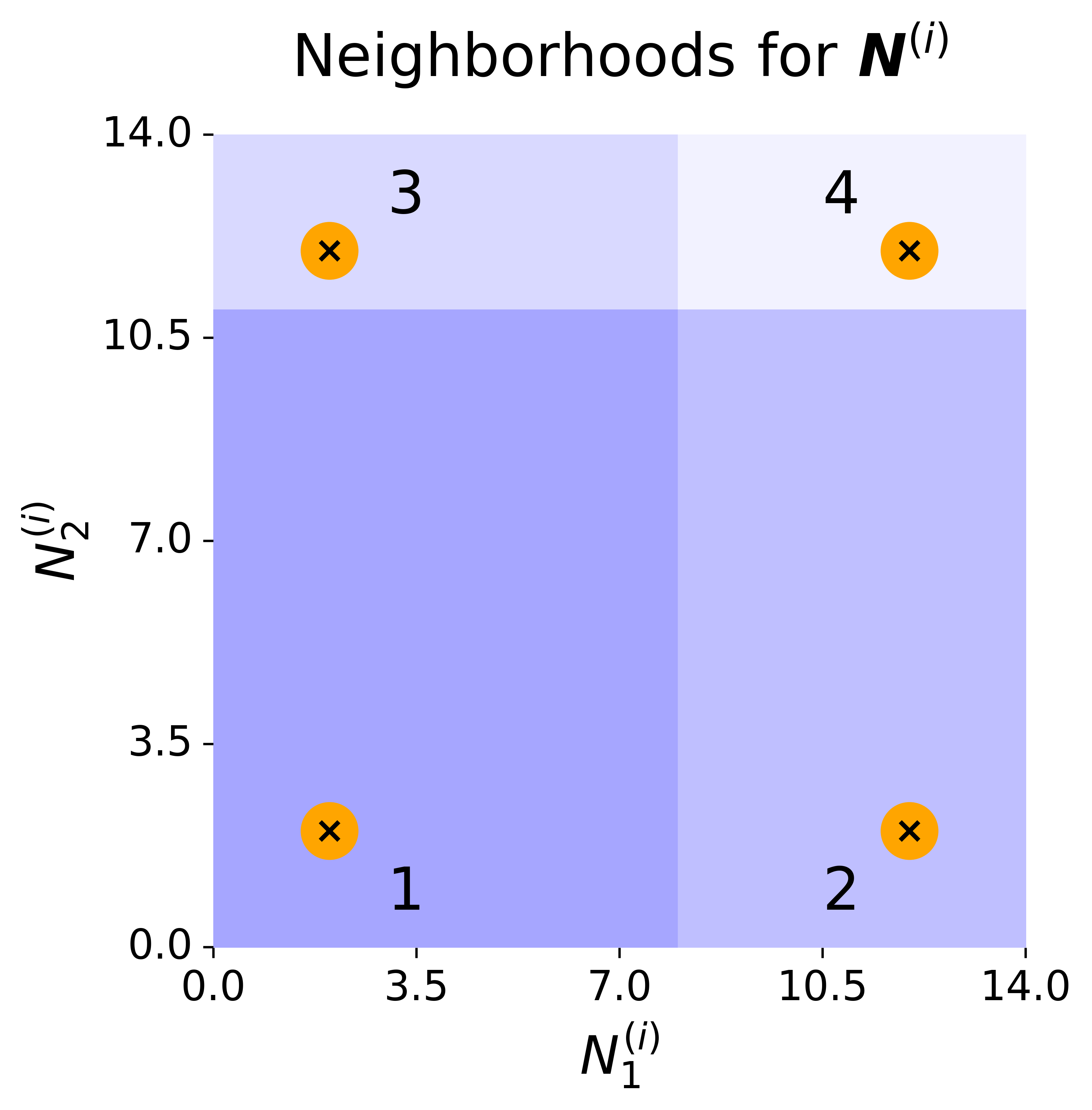}
  \caption{An illustration of potential neighborhoods around the four stable equilibrium points for a host with the local dynamics \cref{eq:toy}. These neighborhoods illustrate potential neighborhoods from \cref{thm:LFA}; they are not the neighborhoods for any particular value of the interaction strength $\gamma$.
  }
  \label{fig:Basins with Balls}
\end{figure}

After an interaction, there is an upper bound on the time that it takes for each microbiome abundance vector to re-enter a neighborhood around a stable equilibrium point. Because the system is finite, there is an upper bound $\tau$ such that if no interactions occur in the time interval $[t, t  + \tau]$, each microbiome abundance vector $\nsup{i}(t + \tau)$ is in one of these neighborhoods. If no two interactions occur within time $\tau$ of each other, then successive applications of the total-interaction operator $\Phi$ perfectly describe the evolution of the basin probability tensor $\Psi(t)$. In the frequency-scaled time interval $[0,T^*]$, the expected number of system interactions is $T^*$. As $\ltot\to 0$, the frequency-scaled time $\tau^* \to 0$. Therefore, it becomes vanishingly unlikely that any pair of interactions occurs within a frequency-scaled time that is less than $\tau^*$. Consequently, as $\ltot \to 0$, the effect on the basin probability tensor $\Psi(t^*)$ of all interactions in $[0,T^*]$ is described perfectly by the total-interaction operator $\Phi$ with arbitrarily high probability. Because the interactions are exponentially distributed, the basin probability tensor $\Psi(t^*)$ converges uniformly to $\widetilde\Psi(t^*)$ (see \cref{eq:LFA}).



\section{High-Frequency Approximations} \label{sec:HFA}

In this section, we discuss two approximations that are accurate for different regimes with large $\ltot$. The first of these approximations is the \emph{high-frequency, low-strength approximation} (HFLSA). The HFLSA becomes increasingly accurate as 
$\ltot \to \infty$ and $\gamma \to 0$ for fixed relative interaction-frequency parameters $l_{ij}$ and fixed $\ltot \gamma$. This approximation results in a model that has the same form as the mass-effects model \cref{eq:meta}. The second approximation is the \emph{high-frequency, constant-strength approximation} (HFCSA). This approximation becomes increasingly accurate as $\ltot\to\infty$ for fixed relative interaction-frequency parameters $l_{ij}$ and fixed interaction strength $\gamma$.


\subsection{High-Frequency, Low-Strength Approximation (HFLSA)} \label{HFLS}

The HFLSA is accurate when the interactions between hosts are very frequent but very weak. In this regime, the expectation of the exchange dynamics \cref{eq:exch} is constant, but the variance of the exchange dynamics is small. This results in an approximate model for the dynamics of each microbiome abundance vector $\nsup{i}(t)$ that is deterministic and has terms that encode the effects of the local dynamics and the exchange dynamics. This approximate model has the same form as the mass-effects model \cref{eq:meta}.

\begin{theorem}[High-Frequency, Low-Strength Approximation Theorem]\label{thm:HFLSA}
  Fix the relative \linebreak 
  interaction-frequency parameters $l_{ij}$, the product $\ltot \gamma$, and a time $T$. Let each local-dynamics function $g^{(i)}$ be continuously differentiable and bounded (see \cref{sec:LD}), and let $\varepsilon\in(0,1]$ and $\delta > 0$ be arbitrary but fixed. For sufficiently large $\ltot$, each host's microbiome abundance vector $\nsup{i}(t)$ satisfies
  \begin{equation} \label{eq:HFLSA bound}
      \normLinf{\nsup{i} - \nsupapp{i}}{[0,T]} < \delta
  \end{equation}
  with probability larger than $1 - \varepsilon$, where
\begin{align}\label{eq:HFLSA}
        \frac{\der \nsupapp{i}}{\der t} &= g^{(i)}(\nsupapp{i}) + \sum_j \lambda_{ij} \gamma \left(\nsupapp{j} - \nsupapp{i}\right) \,, \\
        \nsupapp{i}(0) &= \nsup{i}(0) \,. \nonumber
\end{align}
\end{theorem}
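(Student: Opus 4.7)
The plan is to represent the discrete interaction dynamics as a jump stochastic process, decompose it into a compensated drift plus a mean-zero martingale, show that the drift matches the right-hand side of the HFLSA ODE, control the error via a Grönwall argument, and show that the martingale vanishes via Doob's $L^2$ maximal inequality together with the scaling $\gamma^2 \lambda_{ij} = \gamma \cdot (\lambda_{ij}\gamma) = O(\gamma)$.

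First, I would write the evolution of each microbiome abundance vector as the jump stochastic differential equation
\begin{equation*}
\nsup{i}(t) = \nsup{i}(0) + \int_0^t \gi(\nsup{i}(s))\,\der s + \sum_j \gamma \int_0^t \left(\nsup{j}(s^-) - \nsup{i}(s^-)\right) \der N_{ij}(s)\,,
\end{equation*}
where each $N_{ij}$ is a Poisson process of rate $\lambda_{ij}$, with the processes for distinct unordered pairs mutually independent. Compensating each Poisson increment by its mean gives
\begin{equation*}
\nsup{i}(t) = \nsup{i}(0) + \int_0^t \gi(\nsup{i}(s))\,\der s + \int_0^t \sum_j \lambda_{ij}\gamma \left(\nsup{j}(s) - \nsup{i}(s)\right) \der s + \bm{M}_i(t)\,,
\end{equation*}
where $\bm{M}_i$ is a mean-zero square-integrable martingale whose component-wise predictable quadratic variations are $\sum_j \gamma^2 \lambda_{ij} \int_0^t (\nsupsub{j}{k}(s) - \nsupsub{i}{k}(s))^2 \der s$. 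The drift in this compensated equation is exactly the right-hand side of \cref{eq:HFLSA}, so the error $\bm{e}^{(i)} := \nsup{i} - \nsupapp{i}$ satisfies an integral identity in which the deterministic part is Lipschitz and the stochastic part is $\bm{M}_i$.

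Next, I would use boundedness of each $\gi$ to confine $\nsup{i}(t)$ and $\nsupapp{i}(t)$ to the compact set $[0,M]^n$, on which continuous differentiability of $\gi$ supplies a Lipschitz constant $L$. By the triangle inequality, $\normtwo{\bm{e}^{(i)}(t)}$ is at most
\begin{equation*}
L\int_0^t \normtwo{\bm{e}^{(i)}(s)}\,\der s + \sum_j \lambda_{ij}\gamma \int_0^t \left(\normtwo{\bm{e}^{(j)}(s)} + \normtwo{\bm{e}^{(i)}(s)}\right)\der s + \normtwo{\bm{M}_i(t)}\,.
\end{equation*}
Summing over $i$ and using that $\sum_{i,j}\lambda_{ij}\gamma \leq 2\ltot\gamma$ is fixed by hypothesis, a standard Grönwall argument yields
\begin{equation*}
\sup_{t\in[0,T]} \sum_i \normtwo{\bm{e}^{(i)}(t)} \leq C \sup_{t\in[0,T]} \sum_i \normtwo{\bm{M}_i(t)}\,,
\end{equation*}
where $C = C(L,\ltot\gamma,|H|,T)$ does not depend on $\ltot$ individually.

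Finally, I would bound the martingale. Since $\normtwo{\nsup{j} - \nsup{i}}^2 \leq nM^2$ pointwise, the total predictable quadratic variation of $\bm{M}_i$ on $[0,T]$ is at most $n T M^2 \gamma^2 \sum_j \lambda_{ij} \leq 2 n T M^2 \gamma(\ltot\gamma)$. Doob's $L^2$ maximal inequality then gives $\E[\sup_{t\in[0,T]} \normtwo{\bm{M}_i(t)}^2] = O(\gamma)$, and Markov's inequality combined with a union bound over $i \in \{1,\ldots,|H|\}$ shows that $\sum_i \sup_t \normtwo{\bm{M}_i(t)} < \delta/C$ with probability at least $1-\varepsilon$ for all sufficiently large $\ltot$. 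Combining with the Grönwall bound establishes \cref{eq:HFLSA bound}. The main obstacle is keeping the Grönwall constant $C$ uniform in $\ltot$: the individual exchange coefficients $\lambda_{ij}\gamma$ might in principle grow with $\ltot$, but the standing hypothesis that $\ltot\gamma$ and the ratios $l_{ij} = \lambda_{ij}/\ltot$ are fixed forces $\lambda_{ij}\gamma = l_{ij}(\ltot\gamma)$ to remain bounded. This is exactly the scaling that kills the diffusive contribution ($\gamma^2 \lambda_{ij} = O(\gamma) \to 0$) while preserving the drift ($\gamma\lambda_{ij} = O(1)$), which is why the limit is deterministic rather than a diffusion.
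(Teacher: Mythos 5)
Your proposal is correct, but it takes a genuinely different route from the paper. The paper's proof is elementary: it partitions $[0,T]$ into steps of width $dt$ chosen so that $dt^4 \leq \gamma \leq 4\,dt^4$, Taylor-expands both the true and approximate dynamics on each step, splits the interaction contribution into its conditional expectation plus a fluctuation term, bounds that fluctuation (and the number of interactions per step) with Chebyshev's inequality and a union bound over the $T/dt$ steps, and closes with a discrete Gr\"onwall recursion; the resulting error is of order $\gamma^{1/4}$. You instead write the dynamics as a jump SDE driven by independent Poisson processes, compensate to obtain exactly the HFLSA drift plus a mean-zero martingale, and control the martingale uniformly on $[0,T]$ via Doob's $L^2$ maximal inequality, Markov's inequality, and a union bound over hosts, using precisely the scaling $\gamma^2\lambda_{ij} = l_{ij}\gamma(\ltot\gamma) = O(\gamma)$ that the paper exploits through its variance computation. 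Your decomposition is sound (the pair processes are independent, so the predictable quadratic variations add; boundedness of the state space in $[0,M]^n$ makes the martingale square-integrable and supplies the Lipschitz constant), the Gr\"onwall constant indeed depends only on $L$, $\ltot\gamma$, $|H|$, and $T$, and your argument yields a sharper $O(\sqrt{\gamma})$ rate in probability. What the paper's approach buys in exchange is self-containedness --- it uses nothing beyond Taylor's theorem and Chebyshev --- and its time-slicing machinery is reused almost verbatim in the proof of the HFCSA theorem, whereas your argument presupposes the compensator formalism for marked point processes.
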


We provide key steps of the proof of \cref{thm:HFLSA} in this section. We give a full proof in \cref{sec:HFLSA Proof}. 

Consider the evolution of a microbiome abundance vector $\nsup{i}(t)$ over a short time interval $[t',t' + dt]$. 
Without interactions, the effect of the local dynamics over this interval is
\begin{equation} \label{eq:effloc}
    \left(\nsup{i}(t' + dt) - \nsup{i}(t')\right)_\text{loc} = \gi\left(\nsup{i}(t')\right) dt + O(dt^2) \, .
\end{equation}

Let
\begin{equation}
	\Ji(t_I) = \nsup{i}\left(t_I^+\right) - \nsup{i}\left(t_I^-\right) 
\end{equation}
be the effect of an interaction at time $t_I$ on $\nsup{i}(t)$. If the interaction does not involve \hsupa{i}, then $\Ji(t_I) = \bm{0}$. Otherwise, if the interaction is between \hsupa{i} and \hsupa{j}, then 
\begin{equation}
    \Ji(t_I) = \gamma \left(\nsup{j}\left(t_I^-\right) - \nsup{i}\left(t_I^-\right)\right) \,.
\end{equation}

Suppose that no interactions occur precisely at times $t'$ or $t' + dt$ and that $L$ interactions occur during the interval $(t', t' + dt)$. We denote this ordered set of interactions by $\{t_{l}\}_{l = 1}^{L}$. Suppose for all $i$ that the microbiome abundance vector $\nsup{i}(t)$ changes very little over the interval $[t',t' + dt]$ such that each $\Ji(t_l)$ is well-approximated by
\begin{equation}
        \Jiapp_{l} = \begin{cases}
            \bm{0} & \text{if the interaction at } t_I \text{ does not involve host \hsupa{i}} \\
            \gamma \left(\nsup{j}(t') - \nsup{i}(t')\right) & \text{if the interaction at } t_I \text{ is between hosts \hsupa{i} and \hsupa{j}} \,.
        \end{cases} 
\end{equation}
It then follows that the effect of the interactions {on the microbiome abundance vector $\nsup{i}(t)$} during this interval is well-approximated by
\begin{equation} \label{eq:this}
    \left(\nsup{i}(t' + dt) - \nsup{i}(t')\right)_\text{exch} = \sum_{l = 1}^{L} \Jiapp_{l} \,,
\end{equation}
which we henceforth call the \emph{approximate interaction effect}.

Each of the approximate interaction effects $\Jiapp_{l}$ is vector-valued. We denote entry $x$ of this vector by $\left(\Jiapp_{l}\right)_x$. The sum \cref{eq:this} is also vector-valued, and we denote entry $x$ of this sum by $\left(\sum_{l = 1}^{L} \Jiapp_{l}\right)_x$. We now calculate the expectation and the variance of each entry of \cref{eq:this}. The stochasticity in \cref{eq:this} arises both from the number $L$ of interactions and from which pair of hosts interact at each time. The number of interactions follows a Poisson distribution with mean $\ltot  dt$. The approximate interaction effects $\Jiapp_l$ are independent of one another. An interaction at time $t_l$ is between hosts \hsupa{i} and \hsupa{j} with probability $\lambda_{ij} / \ltot$. Therefore,
\begin{align}
    \E\left[L\right] &= \ltot   dt \,, \\
    \Var\left[L\right] &= \ltot   dt \,, \nonumber \\
    \E\left[\left(\Jiapp_{l}\right)_x\right] &= \sum_j \frac{\lambda_{ij}}{\ltot} \gamma \left(\nsup{j}(t')-\nsup{i}(t')\right)_x \,, \nonumber \\
    \Var\left[\left(\Jiapp_{l}\right)_x\right] &= \E\left[\left(\Jiapp_{l}\right)_x^2\right] - \left(\E\left[\left(\Jiapp_{l}\right)_x\right]\right)^2 \,. \nonumber
\end{align}
The expectation of each entry of the sum is
\begin{align} \label{eq:exsum}
    \E\left[\left(\sum_{l = 1}^{L} \Jiapp_{l}\right)_x\right] &= \E\left[\;\E \left[\left(\sum_{l = 1}^{L} \Jiapp_{l}\right)_x \;\middle|\; L \ \right]\;\right]  \\
    &= \E \left[L  \sum_j \frac{\lambda_{ij}}{\ltot} \gamma \left(\nsup{j}(t') - \nsup{i}(t')\right)_x\right] \nonumber\\
    &= \ltot   dt \sum_j \frac{\lambda_{ij}}{\ltot} \gamma \left(\nsup{j}(t') - \nsup{i}(t')\right)_x \nonumber\\
    &= \sum_j \lambda_{ij} \gamma \left(\nsup{j}(t') - \nsup{i}(t')\right)_x \, dt \,. \nonumber
    \end{align}
Applying the law of total variance, the variance of each entry of the sum is
\begin{align} \label{eq:varsum}
        \Var\left[\left(\sum_{l=1}^{L} \Jiapp_{l}\right)_x\right] &= \E\left[\; \Var \left[\left(\sum_{l=1}^{L} \Jiapp_{l}\right)_x \;\middle|\; L \ \right]\;\right] + \Var\left[\;\E\left[\left(\sum_{l=1}^{L} \Jiapp_{l}\right)_x \;\middle|\; L \ \right]\;\right] \\
        &= \E\left[L \ \Var\left[\left(\Jiapp_{l}\right)_x\right]\;\right] + \Var\left[ \; L \ \E\left[\left(\Jiapp_{l}\right)_x\right]\;\right] \nonumber\\
        &= \ltot   dt \, \Var\left[\left(\Jiapp_{l}\right)_x\right] + \ltot   dt \left(\E\left[\left(\Jiapp_{l}\right)_x\right]\right)^2 \nonumber\\
        &= \ltot   dt \, \E\left[\left(\Jiapp_{l}\right)_x^2\right] \nonumber\\
        &= \ltot  dt \sum_j \frac{\lambda_{ij}}{\ltot} \gamma^2 \left(\nsup{j}(t') - \nsup{i}(t')\right)_x^2 \nonumber\\
        &= \sum_j \lambda_{ij} \gamma^2 \left(\nsup{j}(t') - \nsup{i}(t')\right)_x^2 \, dt \, . \nonumber
\end{align}

Because $\ltot \gamma$ is fixed, the interaction strength $\gamma \rightarrow 0$ as $\ltot \rightarrow \infty$. As $\gamma \to 0$, the expectation \cref{eq:exsum} of each entry of \cref{eq:this} remains fixed, but the variance \cref{eq:varsum} of each entry decreases to $0$. For sufficiently small $\gamma$, the effect of interactions {on the microbiome abundance vector $\nsup{i}(t)$} over the interval $[t',t' + dt]$ is
\begin{equation} \label{eq:effex}
    \left(\nsup{i}(t' + dt) - \nsup{i}(t')\right)_\text{exch} = \sum_j \lambda_{ij} \gamma \left(\nsup{j}(t') - \nsup{i}(t')\right) dt + O(dt^2)
\end{equation}
with arbitrarily high probability.

For sufficiently small $\gamma$, the change in the microbiome abundance vector $\nsup{i}(t)$ over the interval $[t',t' + dt]$ is approximately equal to the sum of the effect \cref{eq:effloc} of local dynamics and the effect \cref{eq:effex} of interactions. In \cref{sec:HFLSA Proof}, we show that
\begin{align}
    \nsup{i}(t' + dt) - \nsup{i}(t') &= \left(\nsup{i}(t' + dt) - \nsup{i}(t')\right)_\text{loc} \\
    &\qquad + \left(\nsup{i}(t' + dt) - \nsup{i}(t')\right)_\text{exch} + O(dt^2) \nonumber \\
    		&= \left[\gi\left(\nsup{i}(t')\right) + \sum_j \lambda_{ij} \gamma \left(\nsup{j}(t') - \nsup{i}(t')\right)\right] dt + O(dt^2)  \nonumber
\end{align}
with arbitrarily high probability. Therefore, $\nsup{i}(t)$ is well-approximated by $\nsupapp{i}(t)$.

As an example of the HFLSA, consider a two-host system in which each host has local dynamics \cref{eq:toy}. Let $\nsup{1}(0) = (2,2)$ and $\nsup{2}(0) = (12,12)$. In \cref{fig:Meta Limit Ex}, we show how the approximation improves as we increase the total-interaction-frequency parameter $\ltot = \lambda_{12}$ and decrease the interaction strength $\gamma$ for fixed $\ltot \gamma = 8$. The error that we obtain by using the approximate microbiome abundance vector $\nsupapp{i}(t)$ appears to be largest near times that the $i$th abundance vector $\nsupapp{i}(t)$ transitions between different basins of attraction. However, for sufficiently large $\ltot$, this error becomes arbitrarily small for all times $t \in [0,T]$ with arbitrarily high probability.

\begin{figure}[htbp]
  \centering
  \includegraphics[scale=0.45]{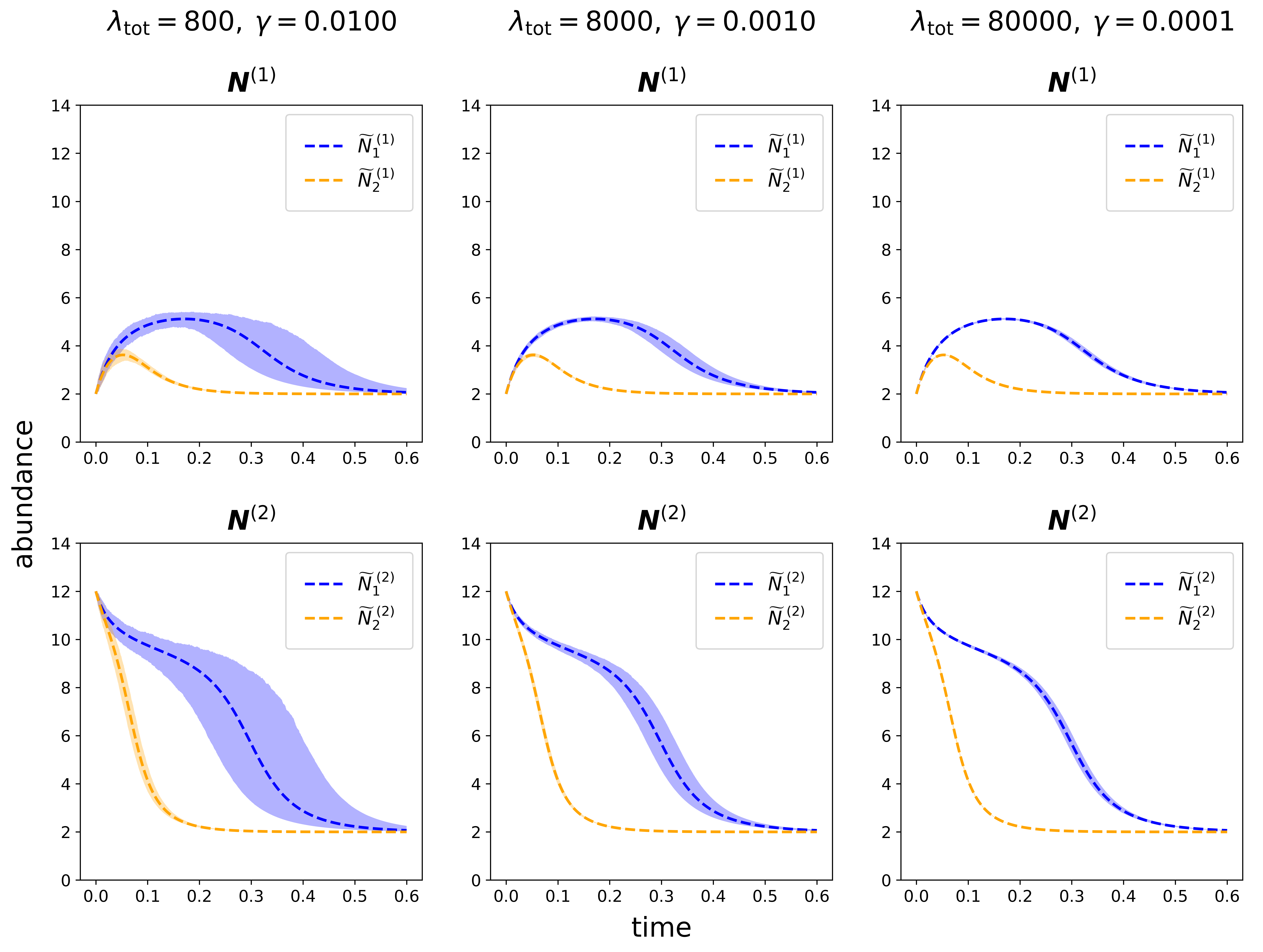}
  \caption{Numerical experiments for a two-host system in which each host has local dynamics \cref{eq:toy}. The three columns show experiments for different values of the total-interaction-frequency parameter $\lambda_{\mathrm{tot}}$ and the interaction strength $\gamma$ for fixed $\lambda_{\mathrm{tot}} \gamma = 8$. We show the microbiome abundances for \hsupa{1} in the first row and the microbiome abundances for \hsupa{2} in the second row. We run 500 simulations for each set of parameters. The highlighted region shows the range between the $5^\mathrm{{th}}$ and $95^\mathrm{{th}}$ percentiles of the simulated host abundances. The dashed curves show the HFLSAs for these experiments.
  }
  \label{fig:Meta Limit Ex}
\end{figure}


\subsection{High-Frequency, Constant-Strength Approximation (HFCSA)}

The HFCSA is accurate when interactions are very frequent and have constant strengths. In this regime, all microbiome abundance vectors converge rapidly to the mean microbiome abundance vector
\begin{equation}
    \overline{\bm{N}}(t) = \frac{1}{|H|} \sum_{j = 1}^{|H|} \nsup{j}(t) \, .
\end{equation} 
Subsequently, these ``synchronized" microbiome abundance vectors each follow the mean of their local dynamics (see \cref{eq:dermean} below). 

\begin{theorem}[High-Frequency, Constant-Strength Approximation Theorem]\label{thm:HFCSA}
  Fix the relative interaction-frequency parameters $l_{ij}$, the interaction strength $\gamma > 0$, and a time $T$. Suppose that each local-dynamics function $g^{(i)}$ is Lipschitz continuous and bounded (see \cref{sec:LD}). Let $\varepsilon \in (0,1]$, $\delta > 0$, and $\eta > 0$ be arbitrary but fixed constants. For sufficiently large $\ltot$, each host microbiome abundance vector $\nsup{i}(t)$ satisfies
  \begin{equation}\label{eq:HFCSA bound}
      \normLinf{\nsup{i} - \widetilde{\bm{N}}}{[\eta,T]} < \delta
  \end{equation}
with probability larger than $1 - \varepsilon$, where
  \begin{align}\label{eq:HFCSA}
         \frac{\der \widetilde{\bm{N}}}{\der t} &= \frac{1}{|H|} \sum_{j = 1}^{|H|} g^{(j)}\left(\widetilde{\bm{N}}\right) \,, \\
         \widetilde{\bm{N}}(0) &= \overline{\bm{N}}(0) \,. \nonumber
\end{align}
\end{theorem}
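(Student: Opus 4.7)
The plan is to split the analysis into two phases: a rapid \emph{synchronization} phase that forces all host microbiome abundance vectors close to their empirical mean, and a \emph{tracking} phase in which the mean evolves approximately according to the averaged local dynamics. The key structural observation is that each pairwise interaction \cref{eq:exch} preserves the pairwise sum $\nsup{i}\left(t_I^-\right) + \nsup{j}\left(t_I^-\right)$ and hence preserves the overall mean $\overline{\bm{N}}(t)$. Consequently, $\overline{\bm{N}}(t)$ is continuous across interaction times and satisfies $\frac{d\overline{\bm{N}}}{dt} = \frac{1}{|H|}\sum_j g^{(j)}(\nsup{j}(t))$ between interactions. If each $\nsup{j}(t)$ is close to $\overline{\bm{N}}(t)$, then the right-hand side is close to $\frac{1}{|H|}\sum_j g^{(j)}(\overline{\bm{N}}(t))$, which is precisely the ODE that drives $\widetilde{\bm{N}}$.

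\textbf{Synchronization via a Lyapunov function.} I would introduce the dispersion Lyapunov functional $V(t) = \sum_i \normtwo{\nsup{i}(t) - \overline{\bm{N}}(t)}^2$. A direct computation shows that an interaction between hosts \hsupa{i} and \hsupa{j} decreases $V$ by exactly $2\gamma(1-\gamma)\normtwo{\nsup{i} - \nsup{j}}^2$, so the expected rate of decrease of $V$ from interactions equals $2\gamma(1-\gamma)\sum_{i<j}\lambda_{ij}\normtwo{\nsup{i} - \nsup{j}}^2 \geq c_2\,\ltot\, V$, where the constant $c_2 > 0$ comes from a standard Laplacian quadratic-form bound for the weighted graph of relative interaction-frequency parameters $l_{ij}$ (which requires connectivity of that graph). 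Between interactions, boundedness of each $g^{(i)}$ on $[0,M]^n$ gives a deterministic growth bound $\frac{dV}{dt} \leq c_1$. Together, these yield the expected-drift inequality $\frac{d}{dt}\E[V(t)] \leq c_1 - c_2\,\ltot\,\E[V(t)]$, so $\E[V(t)] \leq c_1/(c_2 \ltot)$ once $t \gtrsim 1/(c_2\ltot)$. A Markov-inequality argument then delivers, for any preassigned $\eta' \in (0,\eta)$ and $\delta' > 0$, the high-probability bound $\sup_{t \in [\eta', T]} V(t) < \delta'$ whenever $\ltot$ is sufficiently large.

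\textbf{Tracking via Gr\"onwall.} Once synchronization is established, I would close the loop by comparing $\overline{\bm{N}}$ with $\widetilde{\bm{N}}$. Let $e(t) = \normtwo{\overline{\bm{N}}(t) - \widetilde{\bm{N}}(t)}$ and let $L$ be a common Lipschitz constant for the $g^{(j)}$ on $[0,M]^n$. Adding and subtracting $\frac{1}{|H|}\sum_j g^{(j)}(\widetilde{\bm{N}}(t))$ and using Lipschitz continuity gives
\begin{equation*}
    \dot{e}(t) \leq L\,D(t) + L\,e(t) \,, \qquad D(t) := \max_j \normtwo{\nsup{j}(t) - \overline{\bm{N}}(t)} \leq \sqrt{V(t)} \,.
\end{equation*}
Since $e(0) = 0$ and $\overline{\bm{N}}$ is continuous across interactions, Gr\"onwall's inequality yields $e(T) \leq L\,e^{LT}\bigl(\eta'\sqrt{V_{\max}} + T\sqrt{\delta'}\bigr)$, where $V_{\max} \leq |H|\,n\,M^2$ is a trivial a priori bound. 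Choosing $\eta'$ sufficiently small and then $\ltot$ sufficiently large (which makes $\delta'$ small) forces $e(t) < \delta/2$ for all $t \in [\eta,T]$. A final triangle inequality $\normtwo{\nsup{i}(t) - \widetilde{\bm{N}}(t)} \leq D(t) + e(t)$ on $[\eta, T]$ completes the proof.

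\textbf{Main obstacle.} I expect the chief technical difficulty to lie in promoting the expected-drift estimate on $V$ to a \emph{uniform-in-time}, high-probability bound strong enough to feed into the Gr\"onwall step. The process $V(t)$ is piecewise-deterministic with downward jumps at stochastic interaction times of total rate $\ltot$, and one must control its supremum, not just its mean, over the entire window $[\eta',T]$. The natural tool is a supermartingale argument for $V(t) - c_1/(c_2\ltot)$ combined with Doob's maximal inequality, which requires careful bookkeeping of the continuous-time jump structure. A secondary subtlety is that the theorem statement does not explicitly require the weighted interaction graph to be connected; connectivity is necessary to produce the spectral-gap constant $c_2$, and so must either be inserted as a standing assumption or handled component-by-component (in which case different components would synchronize to different means and the theorem would need to be interpreted accordingly).
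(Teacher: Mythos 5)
Your proposal is correct in its architecture and follows essentially the same two-phase strategy as the paper: a dispersion functional that is non-increasing under interactions and grows at a bounded rate between them, a high-probability uniform-in-time smallness bound for large $\ltot$, a Gr\"onwall comparison of $\overline{\bm{N}}$ with $\widetilde{\bm{N}}$, and a closing triangle inequality. The differences are in implementation. You work with the variance $V(t)=\sum_i\normtwo{\nsup{i}-\overline{\bm{N}}}^2$ and a weighted-Laplacian spectral-gap bound, whereas the paper uses the edge-weighted Dirichlet energy $U(t)=\tfrac12\sum_{i,j}\frac{\lambda_{ij}}{\ltot}\normtwo{\nsup{i}-\nsup{j}}^2$ and avoids any explicit spectral constant: it partitions $[0,T]$ into intervals of length $dt$ with $4GM\,dt<\xi/3$, argues by contradiction that if $U>\xi/3$ throughout such an interval then every interaction has probability at least $l_{\min}$ of decreasing $U$ by a definite amount, applies Chebyshev to the accumulated decrease to drive $U$ below zero, and then uses the facts that interactions never increase the functional and local dynamics increase it by at most $4GM\,dt$ per interval to propagate $U<\xi$ forward in time. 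That same elementary device resolves what you flag as your main obstacle: since your $V$ likewise only decreases at jumps and grows at rate at most $c_1$ between them, a Markov bound at $O(T/\Delta)$ grid times combined with the deterministic growth bound and a union bound already controls $\sup_{[\eta',T]}V$, so no supermartingale or Doob maximal inequality is required. Your remark about connectivity is well taken and applies equally to the paper, which invokes it implicitly when bounding shortest paths by $|H|-1$ to pass from edgewise closeness to closeness to the mean; your spectral-gap route merely makes the same hypothesis explicit. The Gr\"onwall step matches the paper's (the paper handles the initial layer $[0,dt]$ with the crude bound $CM\,dt$, analogous to your $\eta'\sqrt{V_{\max}}$ term). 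A modest advantage of your functional is that the jump in $V$ is computed exactly, since only the $i$ and $j$ summands change and $\overline{\bm{N}}$ is preserved, whereas the Dirichlet energy also carries cross terms $\normtwo{\nsup{i}-\nsup{k}}^2$ for bystander hosts $k$ that must be accounted for when a jump occurs.
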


We provide key steps of the proof of \cref{thm:HFCSA} in this section. We give a full proof in \cref{sec:HFCSA Proof}. 

When two hosts \hsupa{i} and \hsupa{j} interact at time $t_I$, they exchange portions of their microbiome as described in \cref{eq:exch}. This exchange causes no change in the sum
\begin{align} \label{eq:Ex no d}
 	\nsup{i}\left(t_I^+\right) + \nsup{j}\left(t_I^+\right) &= (1 - \gamma) \nsup{i}\left(t_I^-\right) + \gamma \nsup{j}\left(t_I^-\right) + (1 - \gamma) \nsup{j}\left(t_I^-\right) + \gamma \nsup{i}\left(t_I^-\right) \\
    		&= \nsup{i}\left(t_I^-\right) + \nsup{j}\left(t_I^-\right) \,.   \nonumber
\end{align}
Therefore, interactions do not cause any direct change in the mean microbiome abundance vector $\overline{\bm{N}}(t)$. The local dynamics drive the evolution
\begin{equation} \label{eq:dermean}
    \frac{\der \overline{\bm{N}}}{\der t} = \frac{1}{|H|} \sum_{j = 1}^{|H|} g^{(j)}\left(\nsup{j}\right) \,.
\end{equation}

For sufficiently large $\ltot$, interactions occur on a much faster time scale than the local dynamics. 
Because of this separation of time scales, all microbiome abundance vectors $\nsup{i}(t)$ converge rapidly, which entails that
\begin{equation}
    \normLinf{\nsup{i} - \overline{\bm{N}}}{[\eta,T]} < \xi
\end{equation}
with high probability. For fixed probability, a larger $\ltot$ allows a smaller bound $\xi$. For a sufficiently small bound $\xi$, each abundance vector $\nsup{i}(t)$ is close enough to $\overline{\bm{N}}(t)$ so that $\overline{\bm{N}}(t)$ is well-approximated by {the approximate microbiome abundance vector $\widetilde{\bm{N}}(t)$ (see \cref{eq:HFCSA}). For times $t \in [\eta,T]$, each microbiome abundance vector $\nsup{i}(t)$ is very close to $\overline{\bm{N}}(t)$ and $\overline{\bm{N}}(t)$ is very close to $\widetilde{\bm{N}}(t)$. Therefore, on the interval $[\eta,T]$, each abundance vector 
$\nsup{i}(t)$ is well-approximated by $\widetilde{\bm{N}}(t)$.

As an example of the HFCSA, consider a two-host system in which each host has local dynamics \cref{eq:toy}. Let $\nsup{1}(0) = (2,2)$ and $\nsup{2}(0) = (12,12)$. In \cref{fig:HFSCA ex}, we show how the approximation improves as we increase the total-interaction-frequency parameter $\ltot = \lambda_{12}$ for fixed interaction strength $\gamma = 0.02$.

\begin{figure}[htbp]
  \centering
  \includegraphics[scale=0.45]{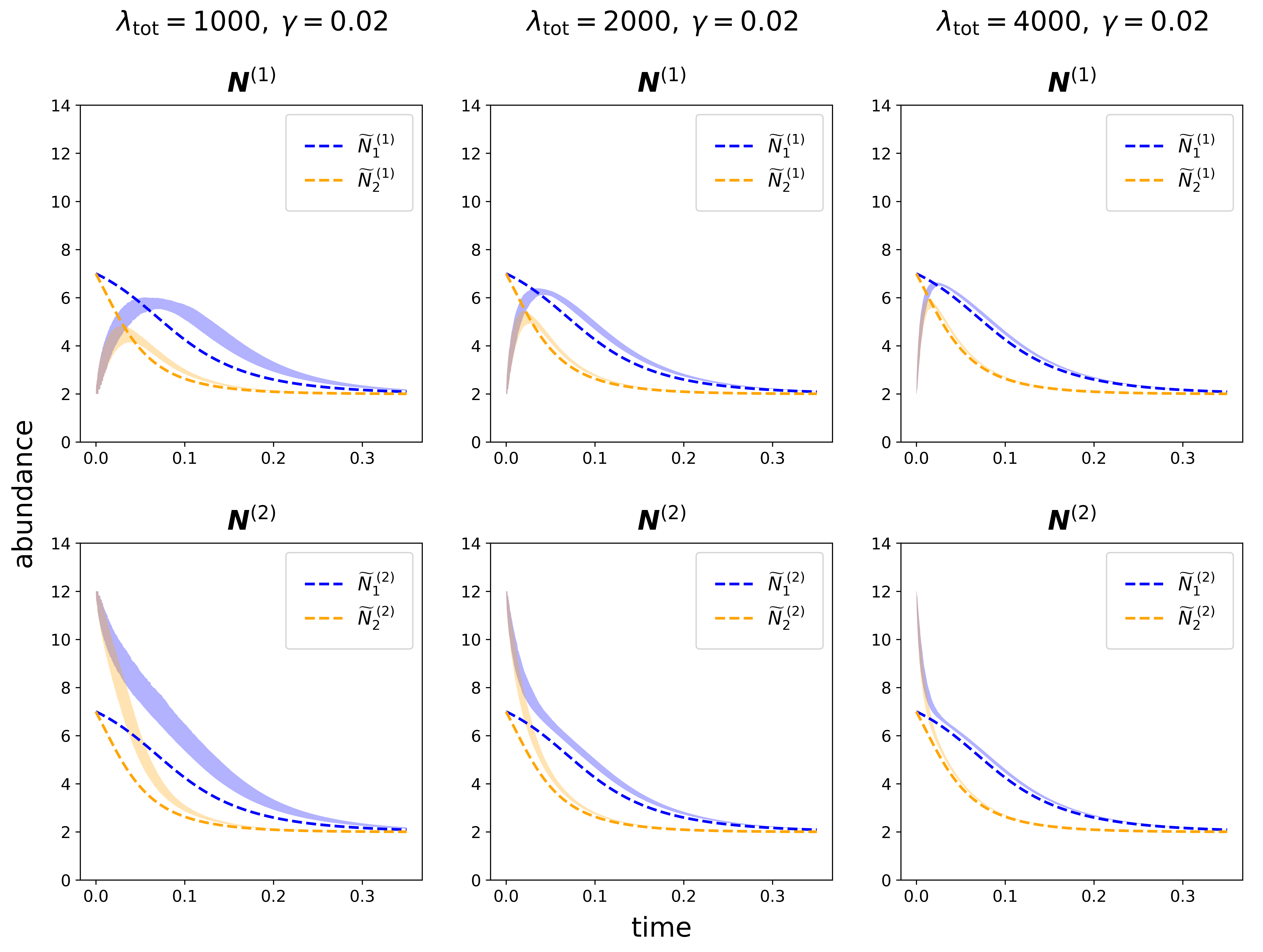}
  \caption{Numerical experiments for a two-host system in which each host has local dynamics \cref{eq:toy}. The three columns show experiments for different values of the total-interaction-frequency parameter $\lambda_{\mathrm{tot}}$ for fixed interaction strength $\gamma = 0.02$. We show the microbiome abundances for \hsupa{1} in the first row and the microbiome abundances for \hsupa{2} in the second row. We run 500 simulations for each set of parameters. The highlighted region shows the range between the $5^\mathrm{{th}}$ and $95^\mathrm{{th}}$ percentiles of the simulated host abundances. The dashed curves show the HFCSA for these experiments.
  }
  \label{fig:HFSCA ex}
\end{figure}



\section{Numerical Experiments} \label{sec:Num}

In this section, we present simulations for a system of $10$ hosts with local dynamics \cref{eq:toy}. We showed the interaction network for this system in \cref{fig:Interaction Network}. This network has $25$ edges, and we suppose that all relative interaction-frequency parameters $\lambda_{ij}$ are equal. For each $\left(\hsup{i},\hsup{j}\right) \in E$, the corresponding relative interaction-frequency parameter is $l_{ij} = {1}/{25}$. We explore the accuracy of our three approximations for a range of values for the total-interaction-frequency parameter $\ltot$ and the interaction strength $\gamma$. 


\subsection{Pair Approximation for the LFA} \label{sec:Pair App}

For the LFA, the approximate basin probability tensor $\widetilde\Psi(t)$ (see \cref{eq:LFA}) has dimension $m_1 \times m_2 \times \cdots \times m_{|H|}$. In many circumstances, this tensor is too large to analyze directly. Therefore, we use a pair approximation \cite{porter2016,Newman} of $\widetilde\Psi(t)$ for our calculations of the LFA in this subsection.

We approximate $\widetilde\Psi(t)$ by tracking the individual probabilities
\begin{equation}
    \psisupsubapp{i}{a}(t) = \text{ probability that host } \nsup{i}(t) \text{ is in basin } a \\
\end{equation}
and the dyadic (i.e., pair) probabilities
\begin{equation}
    \psisupsubapp{ij}{ab}(t) = \text{ probability that host } \nsup{i}(t) \text{ is in basin } a \text{ and host } \nsup{j}(t) \text{ is in basin } b \,.
\end{equation}
To obtain a pair approximation, we also need to consider the triadic (i.e., triplet) probabilities
\begin{align}
    \psisupsubapp{ijk}{abc}(t) &= \text{ probability that host } \nsup{i}(t) \text{ is in basin } a \text{, host } \nsup{j}(t) \text{ is in basin } b \text{,} \\ 
    							&\qquad\text{ and host } \nsup{k}(t) \text{ is in basin } c \,. \nonumber
\end{align}

In the LFA, if an interaction between hosts \hsupa{i} and \hsupa{j} causes their microbiome abundance vectors to move from basin $d$ to basin $a$ and from basin $e$ to basin $b$, respectively, then $\Phi^{(ij)}_{abde} = 1$. Otherwise, $\Phi^{(ij)}_{abde} = 0$. The probability that a given interaction is one between \hsupa{i} and \hsupa{j} is $l_{ij}$. Therefore, the change in $\psisupapp{i}_a(t)$ due to an interaction at time $t_I$ is
\begin{equation}
    \psisupsubapp{i}{a}(t_I^+) - \psisupsubapp{i}{a}(t_I^-) = \sum_{j,b,e} \sum_{d\neq a} l_{ij} \Phi^{(ij)}_{abde} \psisupsubapp{ij}{de}(t_I^-) -\sum_{j,b,e} \sum_{d\neq a} l_{ij} \Phi^{(ij)}_{deab} \psisupsubapp{ij}{ab}(t_I^-) \,.
\end{equation}

Using frequency-scaled time, the expected number of system interactions during an interval $[t^*,t^* + dt^*]$ is $dt^*$. Therefore,
\begin{equation} \label{eq:single der}
    \frac{\der}{\der t^*}\psisupsubapp{i}{a} = \sum_{j,b,e} \sum_{d\neq a} l_{ij} \left[\Phi^{(ij)}_{abde} \psisupsubapp{ij}{de} - \Phi^{(ij)}_{dbae} \psisupsubapp{ij}{ae}\right] \,.
\end{equation}
For the dyadic probabilities, we have
\begin{align} \label{eq:pair der 1}
    \frac{\der}{\der t^*}\psisupsubapp{ij}{ab} &=   \sum_{e\neq b,d\neq a} l_{ij}\left[ \Phi^{(ij)}_{abde} \psisupsubapp{ij}{de} - \Phi^{(ij)}_{deab} \psisupsubapp{ij}{ab}\right] \nonumber\\ 
    &\qquad + \sum_{k,c,f} \sum_{d\neq a} l_{ik} \left[\Phi^{(ik)}_{acdf} \psisupsubapp{ijk}{dbf} - \Phi^{(ik)}_{dcaf} \psisupsubapp{ijk}{abf}\right] \\
    &\qquad + \sum_{k,c,f} \sum_{e\neq b} l_{jk} \left[Phi^{(jk)}_{bcef} \psisupsubapp{ijk}{aef} - \Phi^{(jk)}_{ecbf} \psisupsubapp{ijk}{abf}\right] \nonumber \,.
\end{align}
The right-hand sides in \cref{eq:single der} and \cref{eq:pair der 1} are exact expressions. We form an approximation by replacing the triadic probabilities in \cref{eq:pair der 1} with combinations of dyadic probabilities. In the derivatives of $\psisupapp{ij}(t^*)$, when considering the impact of interactions between hosts \hsupa{i} and \hsupa{k}, we use the approximation
\begin{equation} \label{eq:this-approx}
    \psisupsubapp{ijk}{abc}(t^*) \approx \frac{\psisupsubapp{ij}{ab}(t^*) \psisupsubapp{ik}{ac}(t^*)}{\psisupsubapp{i}{a}(t^*)} \,,
\end{equation}
which assumes that $\psisupsubapp{j}{b}$ and $\psisupsubapp{k}{c}$ are independent. Inserting the approximation \cref{eq:this-approx}
into \cref{eq:pair der 1} gives
\begin{align} \label{eq:pair der 2}
    \frac{\der}{\der t^*}\psisupsubapp{ij}{ab} \approx &  \sum_{e\neq b,d\neq a} l_{ij} \left[\Phi^{(ij)}_{abde} \psisupsubapp{ij}{de} - \Phi^{(ij)}_{deab} \psisupsubapp{ij}{ab}\right] \\ 
    &\qquad + \sum_{k,c,f} \sum_{d\neq a} l_{ik} \left[\Phi^{(ik)}_{acdf} \frac{\psisupsubapp{ij}{db} \psisupsubapp{ik}{df}}{\psisupsubapp{i}{d}} - \Phi^{(ik)}_{dcaf} \frac{\psisupsubapp{ij}{ab} \psisupsubapp{ik}{af}}{\psisupsubapp{i}{a}}\right] \nonumber\\
    &\qquad + \sum_{k,c,f} \sum_{e\neq b} l_{jk} \left[\Phi^{(jk)}_{bcef} \frac{\psisupsubapp{ij}{ae} \psisupsubapp{jk}{ef}}{\psisupsubapp{j}{e}} - \Phi^{(jk)}_{ecbf} \frac{\psisupsubapp{ij}{ab} \psisupsubapp{jk}{bf}}{\psisupsubapp{j}{b}}\right]b\nonumber \,.
\end{align}

Equations \cref{eq:single der,eq:pair der 2} constitute a pair approximation of the evolution of $\widetilde\Psi(t^*)$. In our simulations (see \cref{sec:LFA Num}), we compare the individual probabilities $\psisupsubapp{i}{a}(t^*)$ from our pair approximation \crefcomma{eq:single der,eq:pair der 2} with the fraction $\psi^{(i),\,\text{sim}}_a(t^*)$ of simulations in which $\nsup{i}(t^*)$ is in basin of attraction $a$.


\subsection{Simulations for the Low-Frequency Approximation} 
\label{sec:LFA Num}

In this subsection, we show numerical results for the LFA \cref{eq:LFA}. We use the pair approximation \crefcomma{eq:single der,eq:pair der 2} to determine the individual probabilities $\psisupsubapp{i}a(t^*))$. As we described in \cref{sec:Pair App}, an individual probability $\psisupsubapp{i}a(t^*)$ is an approximation of the probability from the LFA that $\nsup{i}(t^*)$ is in basin of attraction $a$. We compare these individual probabilities to the fraction $\psi^{(i),\,\text{sim}}_a(t^*)$ of simulations of \crefcomma{eq:exch,eq:toy} in which $\nsup{i}(t^*)$ is in basin of attraction $a$. We perform these approximations and simulations for 59 linearly spaced interaction strengths $\gamma \in [0,0.5]$ and 13 logarithmically spaced values of the total-interaction-frequency parameter $\ltot \in [2.5\times10^{-2},2.5\times10^4]$. Because $\mathcal{B} = \{0.1,0.4\}$ for this system, the LFA is not valid when $\gamma = 0.1$ or $\gamma = 0.4${, so we exclude these values of $\gamma$ from our simulations. Therefore, we perform simulations for values of $\gamma \in [0,0.5]$ that are multiples of $\frac{0.5}{60}$ (except $\gamma = 0.1$ and $\gamma = 0.4$).} For each pair of $\gamma$ and $\ltot$ values, we select a random four-dimensional (4D) vector $\psisupapp{i}(0)$ from the Dirichlet distribution $\text{Dir}(1,1,1,1)$ \cite{MacKay05} for each host. The entries of each of these 4D vectors sum to $1$. For each simulation, we set each $\nsup{i}(0)$ to be the stable equilibrium point in one of the basins of attraction. For each basin of attraction $a$, the initial microbiome abundance vector $\nsup{i}(0)$ is in that basin of attraction with probability $\psisupsubapp{i}{a}(0)$.

We perform 1000 simulations of \crefcomma{eq:exch,eq:toy} for each pair of $\gamma$ and $\ltot$ values, and we calculate the fraction $\psi^{(i),\,\text{sim}}_a(t^*)$ of the simulations in which $\nsup{i}(t^*)$ is in basin of attraction $a$. We compare $\psi^{(i),\,\text{sim}}_a(t^*)$ to $\psisupapp{i}(t^*)$, which we calculate using the pair approximation \crefcomma{eq:single der,eq:pair der 2}. We calculate $\boldsymbol{\psi}^{(i),\,\text{sim}}(t^*)$ and $\psisupapp{i}(t^*)$ for 1001 evenly spaced frequency-scaled times $t_k^* = {k}/{500}$ in the interval $[0,2]$. In \cref{fig:LFA Error}, we plot the error 
\begin{equation} \label{eq:LFA error}
    \text{Error} = \frac{2}{1001} \sum_{k = 1}^{1001} \sqrt{\sum_{i = 1}^{10} \sum_{a = 1}^4 \left(\psi^{(i),\,\text{sim}}_a(t_k^*) - \psisupsubapp{i}{a}(t_k^*)\right)^2 }
\end{equation}
for each pair of $\gamma$ and $\ltot$ values. The error \cref{eq:LFA error}  is a discrete approximation of the norm \normLtwoa{\boldsymbol{\psi}^{(i),\,\text{sim}} - \psisupapp{i}}{[0,T^*]}.

\begin{figure}[htbp]
  \centering
  \includegraphics[scale=0.6]{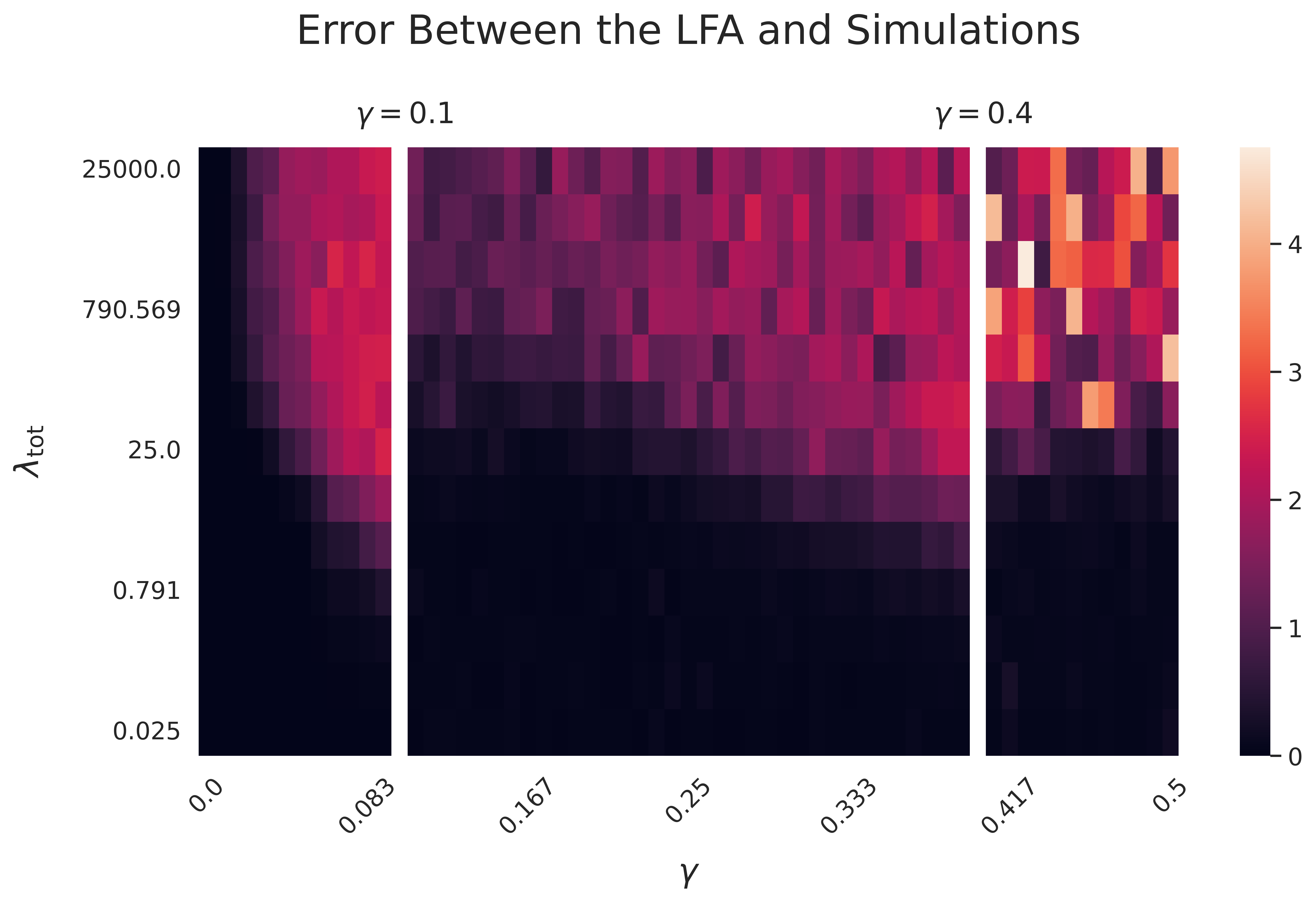}
  \caption{The error \cref{eq:LFA error} between the LFA {(see \cref{eq:LFA})} versus means of 1000 simulations of \crefcomma{eq:exch,eq:toy} for each pair of the interaction strength $\gamma$ and the total-interaction-frequency parameter $\ltot$. We plot $\gamma$ on a linear scale and $\ltot$ on a logarithmic scale. We do not plot errors for $\gamma = 0.1$ and $\gamma = 0.4$ because the LFA is not valid for these values.
  }
  \label{fig:LFA Error}
\end{figure}

The LFA is most accurate when the total-interaction-frequency parameter $\ltot$ is small. It is also better when the interaction strength $\gamma$ is not near $0.1$ or $0.4$. There are two types of errors in the LFA. The first type of error arises when repeated interactions occur in sufficiently quick succession to yield a transition that the LFA misses. For example, for $\gamma \approx 0.342$, the LFA does not predict that the $\nsup{i}(t)$ can move from basin $2$ to basin $1$. For sufficiently large values of $\ltot$, repeated interactions in short succession are common (and not merely possible), which causes the LFA to overestimate the probability that a host is in basin $2$ and underestimate the probability that a host is in basin $1$. In \cref{fig:LFA Type 1}, we illustrate this type of error for $\gamma \approx 0.342$ and several values of $\ltot$.

\begin{figure}[htbp]
  \centering
  \includegraphics[scale=0.4]{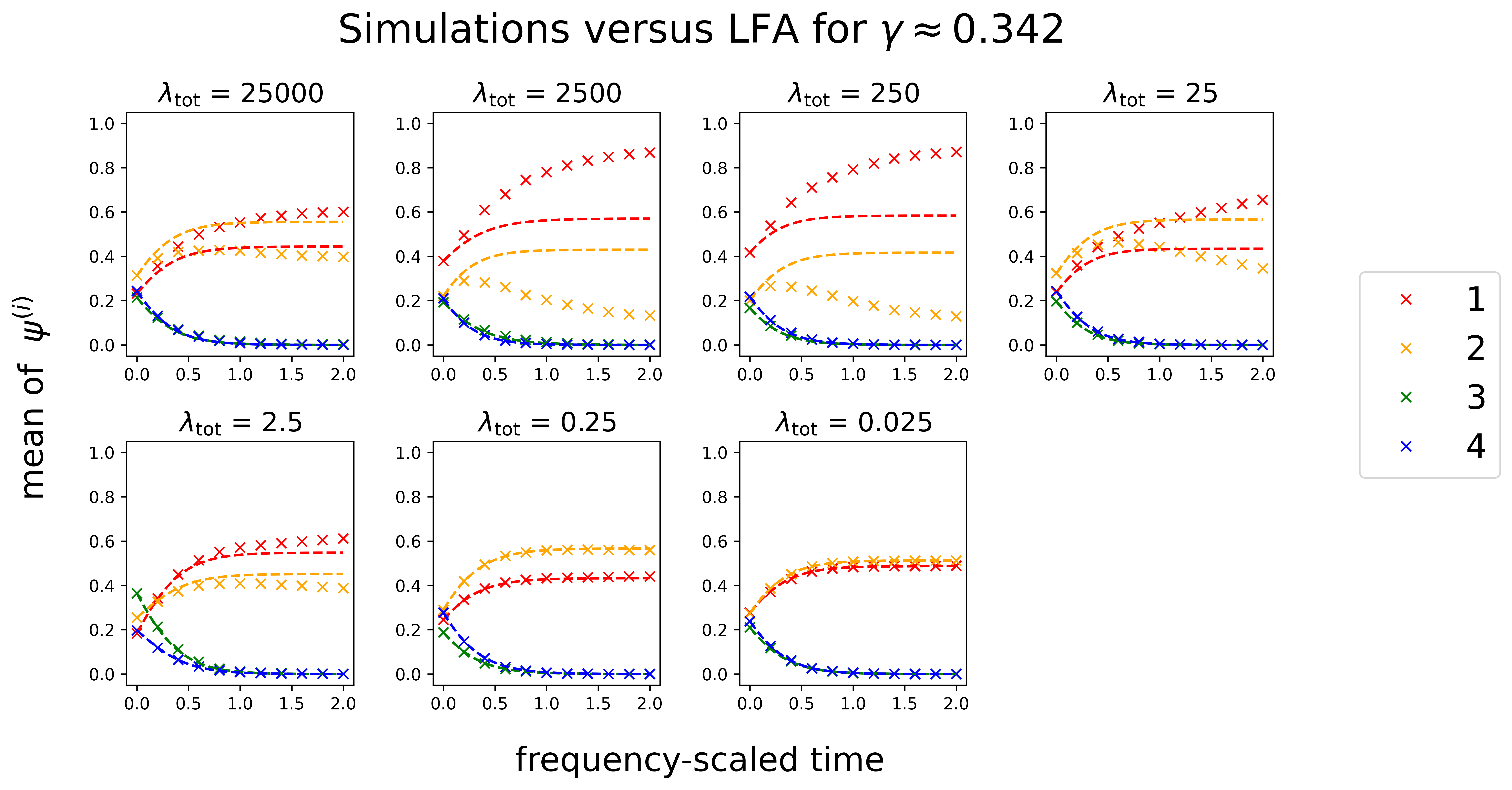}
  \caption{The means of the simulated probabilities $\psi^{(i),\,\mathrm{sim}}_a(t^*)$ over all hosts for interaction strength $\gamma \approx 0.342$ and several values of the total-interaction-frequency parameter $\ltot$. The dashed curves indicate the LFA approximation of the mean of the probabilities $\psisupapp{i}(t^*)$ over all hosts.
  }
  \label{fig:LFA Type 1}
\end{figure}

The second type of error arises when repeated interactions in sufficiently quick succession cause the LFA to overestimate the impact of the second and subsequent interactions. As an example, for the interaction strength $\gamma \approx 0.433$, the LFA predicts that $\nsup{j}(t)$ will be in basin $1$ after an interaction whenever \hsupa{j} interacts with \hsupa{i} and 
$\nsup{i}(t)$ is in basin $1$ before the interaction. This prediction arises because the LFA assumes that $\nsup{i} = (2,2)$ before this interaction. However, if \hsupa{i} recently interacted with a different host, then $\nsup{i}(t)$ may be in basin $1$ but not sufficiently close to the equilibrium point $(2, 2)$ to drive a transition to the basin of attraction of $\nsup{j}(t)$. Consequently, the LFA overestimates the probability that a host is in basin $1$. In \cref{fig:LFA Type 2}, we illustrate this type of error for $\gamma \approx 0.433$ and several values of $\ltot$.

\begin{figure}[htbp]
  \centering
  \includegraphics[scale=0.4]{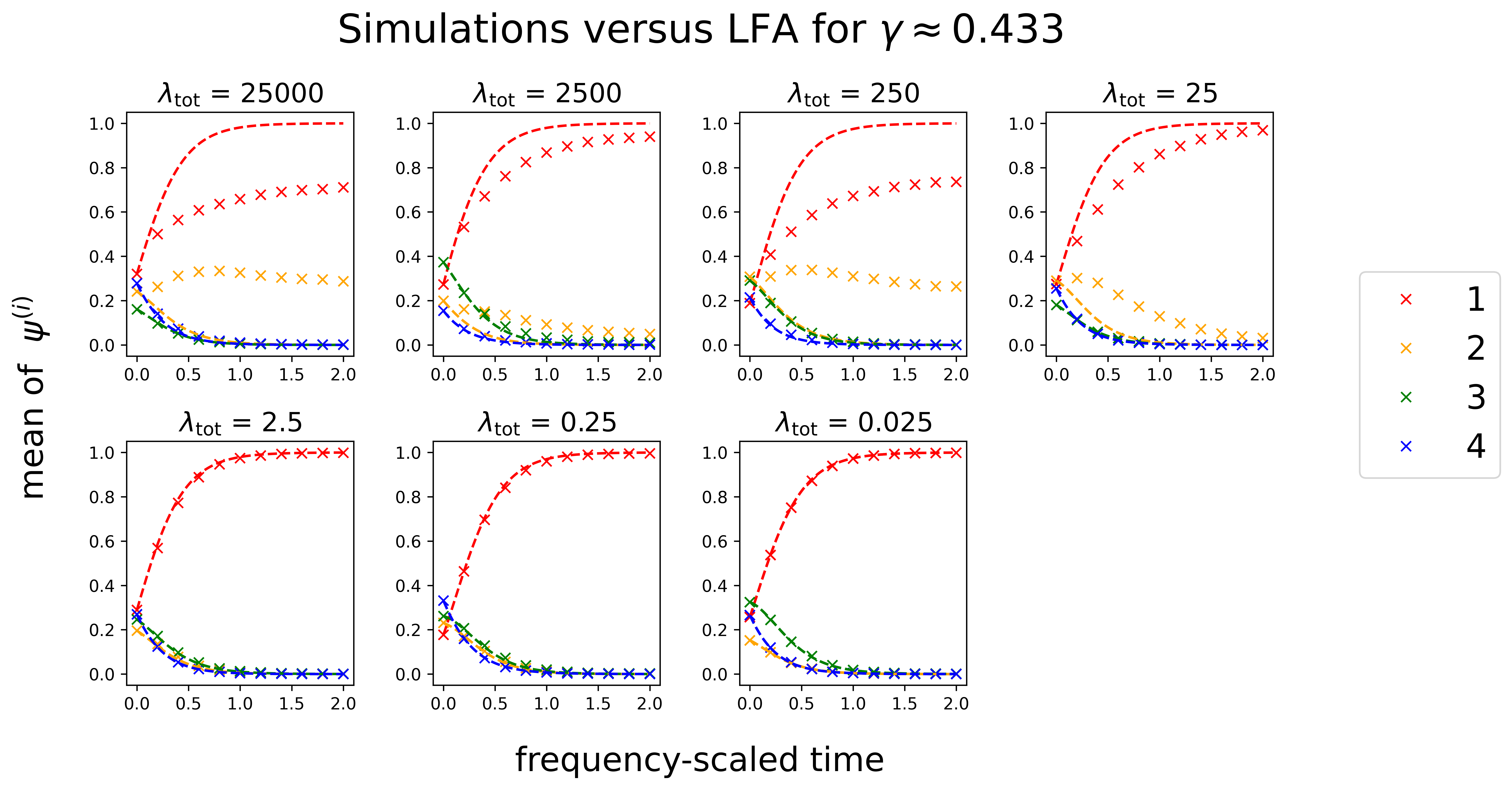}
  \caption{The mean of the simulated probabilities $\psi^{(i),\,\mathrm{sim}}_a(t^*)$ over all hosts for interaction strength $\gamma \approx 0.433$ and several values of the total-interaction-frequency parameter $\ltot$. The dashed curves indicate the LFA approximation of the mean of the probabilities $\psisupapp{i}(t^*)$ over all hosts.
  }
  \label{fig:LFA Type 2}
\end{figure}


\subsection{Simulations for the High-Frequency Approximations}

{In this subsection, we show numerical results for the HFLSA \cref{eq:HFLSA} and the HFCSA \cref{eq:HFCSA}. We compare the approximate microbiome abundance vectors from these two approximations to simulations of \crefcomma{eq:exch,eq:toy}.

To evaluate the accuracy of the approximate microbiome abundance vectors $\nsupapp{i}(t)$ that we obtain from the HFLSA \cref{eq:HFLSA}, we compare them to the microbiome abundance vectors $\nsup{i}(t)$ from \crefcomma{eq:exch,eq:toy}. We perform these simulations for 13 logarithmically spaced values of the total-interaction-frequency parameter $\ltot \in [25,2500]$ and 75 linearly spaced values of $\ltot \gamma \in [0.04,0.3]$. We use the product $\ltot \gamma$ as a parameter instead of the interaction strength $\gamma$ on its own to illustrate the improvement of the HFLSA as we increase $\ltot$ for fixed $\ltot \gamma$. For each pair of $\ltot$ and $\ltot \gamma$ values, we perform 1000 simulations over the time interval $[0,1]$ with initial conditions
\begin{align} \label{eq:init cond}
    \nsup{1}(0) &= (12,12)\,, \ \nsup{2}(0) = (2,2)\,, \ \nsup{3}(0) = (12,2) \,, \\
    \nsup{4}(0) &= (2,2)\,, \ \nsup{5}(0) = (12,12) \,, \ \nsup{6}(0) = (12,12)\,,  \notag \\
    \nsup{7}(0) &= (2,2) \,, \ \nsup{8}(0) = (12,2) \,, \ \nsup{9}(0) = (2,12) \,, \ \nsup{10}(0) = (2,12) \,. \nonumber
\end{align}
The microbiome abundance vector $\bm{N}^{(i),l}(t)$ is the $l$th simulated microbiome abundance vector for host \hsupa{i}. For each of these simulations, we calculate the simulated microbiome abundance vectors $\bm{N}^{(i),l}(t)$ for 101 evenly spaced times $t_k = {k} / {100}$. {We compare these simulations to the approximate microbiome abundance vector $\nsupapp{i}(t)$ from the HFLSA \cref{eq:HFLSA}.} In \cref{fig:HFLSA Error}, we plot the error
\begin{equation} \label{eq:HFA error}
    \text{Error} = \frac{1}{101 \times 1000} \sum_{l = 1}^{1000} \sum_{k = 1}^{101} \sqrt{\sum_{i = 1}^{10} \sum_{a = 1}^2 \left(\bm{N}^{(i),l}_a(t_k) - \nsupapp{i}_a(t_k)\right)^2 }
\end{equation}
for each pair of $\ltot$ and $\ltot \gamma$ values. The error \cref{eq:HFA error} is a discrete approximation of the mean of \normLtwoa{\bm{N}^{(i),l}_a(t_k) - \nsupapp{i}_a(t_k)}{[0,1]} over all simulations. For any fixed value of $\ltot \gamma$, the HFLSA is more accurate for larger $\ltot$. However, for a fixed value of $\ltot$, the error depends significantly on the value of $\ltot \gamma$. Each value of $\ltot \gamma$ yields a set $\left\{\nsupapp{i}(1)\right\}$ of final approximate microbiome abundance vectors. For all but a finite set of values of $\ltot \gamma$, each final approximate microbiome abundance vector $\nsupapp{i}(1)$ changes continuously with $\ltot \gamma$. However, there are a finite number of $\ltot \gamma$ values for which some final approximate microbiome abundance vector $\nsupapp{i}(1)$ has a discontinuous jump. For the initial set of microbiome abundance vectors \cref{eq:init cond}, these discontinuous jumps occur at
\begin{equation} \label{eq:phase tran}
    \ltot \gamma \in \{0.0866,0.1069,0.1160,0.1161,0.1416,1.6432,1.7174,1.7425,1.8187,1.8363\} \, .
\end{equation} 
The regions in which the HFLSA performs worst in our simulations are near $\ltot \gamma \approx 0.1$ and $\ltot \gamma \approx 1.8$, which are very close to several of the values in \cref{eq:phase tran}.

\begin{figure}[htbp]
  \centering
  \includegraphics[scale=0.6]{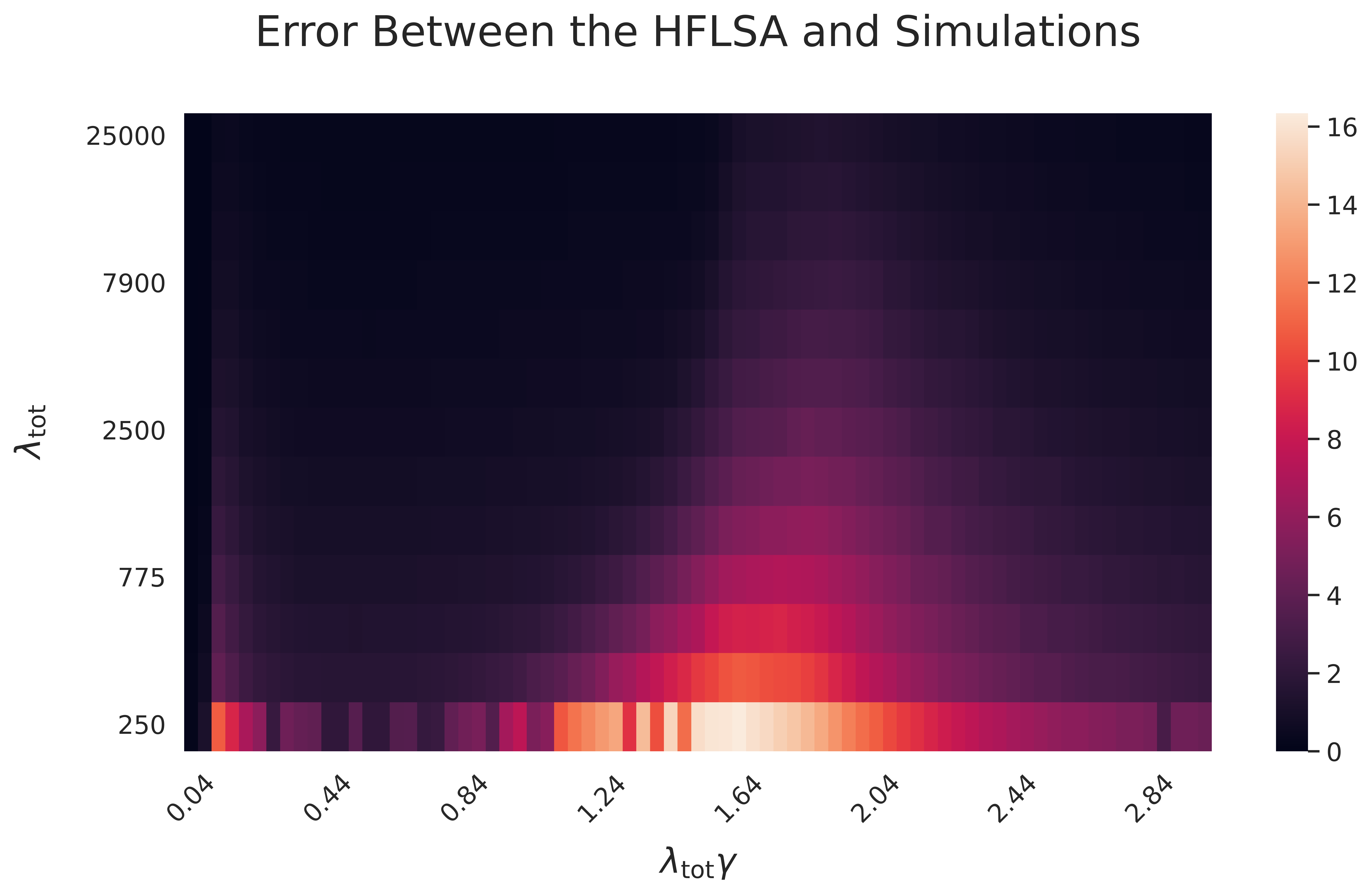}
  \caption{The mean error \cref{eq:HFA error} between the approximate microbiome abundance vectors $\left\{\nsupapp{i}(t)\right\}$ from the HFLSA (see \cref{eq:HFLSA}) and the microbiome abundance vectors $\left\{\nsup{i}(t)\right\}$ for 1000 simulations of \crefcomma{eq:exch,eq:toy}. We plot $\ltot$ on a logarithmic scale and plot $\ltot \gamma$ on a linear scale.
  } 
  \label{fig:HFLSA Error}
\end{figure}

For the HFCSA \cref{eq:HFCSA}, we perform simulations for 61 linearly spaced values of the interaction strength $\gamma \in [0,0.5]$ and 13 logarithmically spaced values of the total-interaction-frequency parameter 
$\ltot \in [2.5\times10^{-1},2.5\times10^4]$. We use the same initial conditions \cref{eq:init cond} as in our HFLSA simulations. We also again {perform 1000 simulations and evaluate each simulated microbiome abundance vector $\bm{N}^{(i),l}(t)$ for 101 evenly spaced times $t_k = {k} / {100}$} on the time interval $[0, 1]$. In \cref{fig:HFCSA Error}, we show the error \cref{eq:HFA error} for each pair of $\gamma$ and $\ltot$ values. The approximation is accurate for sufficiently large $\ltot$. In general, a larger $\gamma$ increases the rate at which each microbiome abundance vector $\nsup{i}(t)$ converges to the mean microbiome abundance vector $\overline{\bm{N}}(t)$. Consequently, the HFCSA yields a better approximation of $\nsup{i}(t)$ for larger values of $\gamma$.

\begin{figure}[htbp]
  \centering
  \includegraphics[scale=0.6]{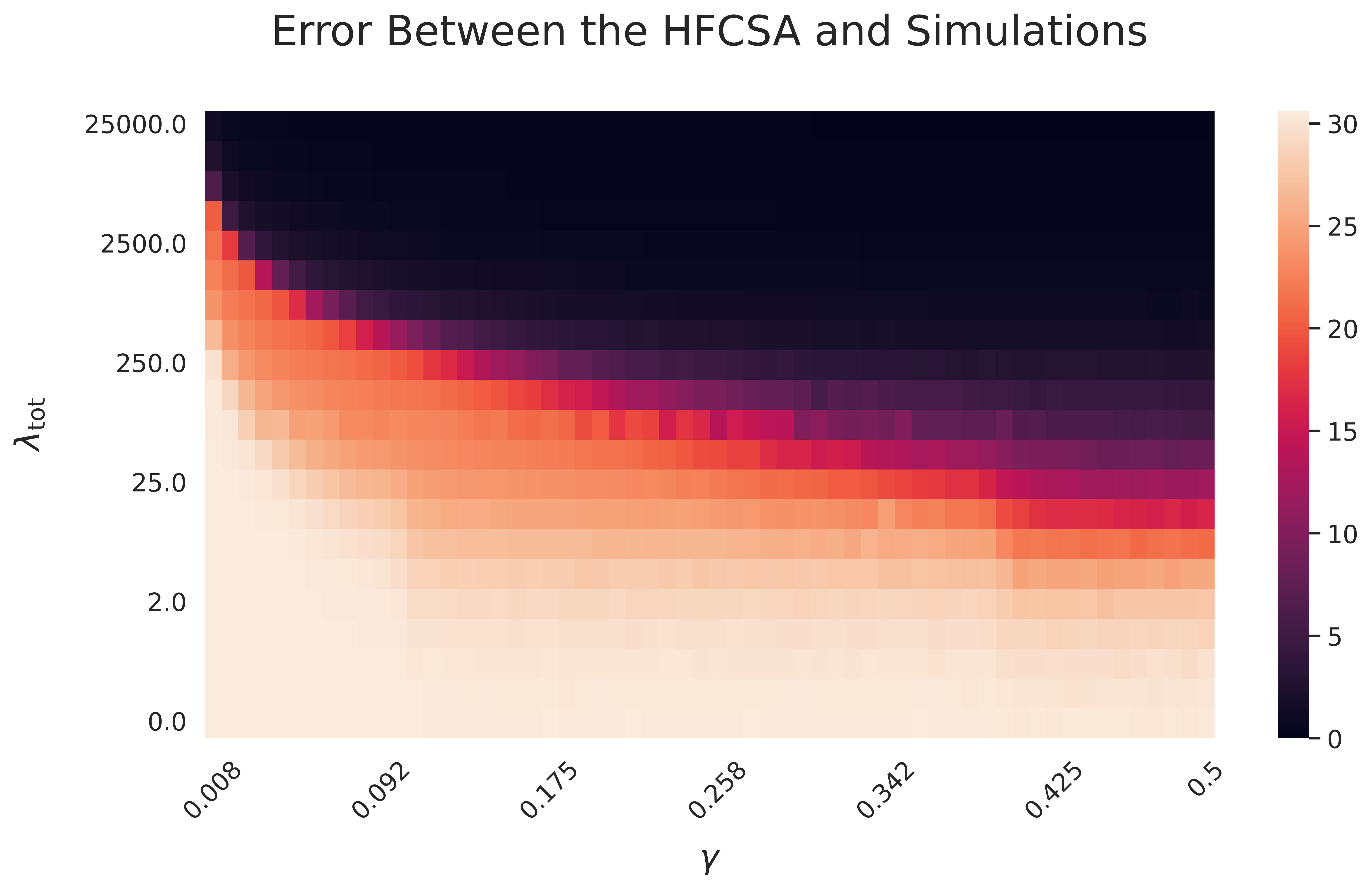}
  \caption{The mean error \cref{eq:HFA error} between the approximate microbiome abundance vectors $\left\{\nsupapp{i}(t)\right\}$ from the HFCSA (see \cref{eq:HFCSA}) and the microbiome abundance vectors $\left\{\nsup{i}(t)\right\}$ for 1000 simulations of \crefcomma{eq:exch,eq:toy}. We plot $\ltot$ on a logarithmic scale and plot $\gamma$ on a linear scale.
  }
  \label{fig:HFCSA Error}
\end{figure}



\section{Conclusions and Discussion} \label{sec:Conc}


\subsection{Summary}

We developed a novel framework to model the microbiome dynamics of living hosts that incorporates both the local dynamics within an environment and exchanges of microbiomes between environments. Our framework extends existing metacommunity theory by accounting for the discrete nature of host interactions. Unlike classical mass-effects models, our framework incorporates two distinct parameters that control interaction frequencies and interaction strength. Using both analytical approximations and numerical computations, we demonstrated that both parameters are necessary to determine microbiome dynamics. 

We developed approximations in three parameter regions, and we proved their accuracy in those regions. Our low-frequency approximation (LFA) gives a good approximation of the microbiome dynamics when local dynamics are much faster than host interactions. Our high-frequency, low-strength approximation (HFLSA) encodes the dynamics of a system when interactions are frequent but weak, resulting in a model with the same form as the mass-effects model \cref{eq:meta}. Finally, our high-frequency, constant-strength approximation (HFCSA) accurately predicts the rapid convergence of all hosts' microbiome dynamics when interactions are frequent and have constant interaction strength $\gamma$. We validated each of these approximations through numerical experiments on an illustrative model of microbiome dynamics for a range of parameter values. 

A qualitative example of dynamics in our model involves the probability that all microbiome abundance vectors converge in some time interval. This probability depends both the interaction-frequency parameters and on the interaction strength. Using the LFA, we showed for sufficiently small interaction-frequency parameters that the convergence probability depends on whether the interaction strength $\gamma$ is large enough that a single interaction between two hosts places their microbiome abundance vectors in the same basin of attraction. By contrast, using the HFLSA, we showed for sufficiently large interaction-frequency parameters that the convergence probability depends on the product $\ltot \gamma$ of the total-interaction-frequency parameter $\ltot$ and the interaction strength $\gamma$. For intermediate values of the interaction-frequency parameters, we used numerical simulations of models in our framework to examine convergence probabilities.


\subsection{Outlook} \label{sec:FW}

Our modeling framework provides a foundation for many promising future research directions in microbiome dynamics. In our framework's current form, one can use it to study the effects of host interactions in many ecological models of local dynamics. One can also use our framework to study the impact of the structure of interaction networks on microbiome dynamics.

There are many possible extensions of our modeling framework. For example, we considered a homogeneous interaction strength $\gamma$ for simplicity. However, one can allow each pair of hosts to have heterogeneous interaction strengths $\gamma_{ij}$. Additionally, hosts can exchange different microbe species with different exchange strengths, so one can also consider interaction strengths $\gamma_{ijk}$ that encode the exchange of different species $k$ when hosts \hsupa{i} and \hsupa{j} interact. This seems helpful when using consumer--resource models for the local dynamics. In this case, it also may be desirable to separately encode the strengths of microbiome exchange and resource exchange. Another way to extend our framework is to relax our assumption of instantaneous microbiome exchange by instead employing rapid but continuous functions.

Additionally, in our framework, the LFA assumes that the attractors of each 
\linebreak
local-dynamics function $\gi$ consist of a finite set of stable equilibrium points. We believe that it is possible to extend the LFA to systems in which the $\gi$ have more complicated attractors, such as limit cycles and chaotic attractors. We also believe that it is possible to generalize the HFLSA and HFCSA to systems in which the times between consecutive interactions for pairs of adjacent hosts follow a distribution other than an exponential distribution.


\appendix


\section{Proof of Low-Frequency Approximation Theorem} \label{sec:LFA Proof}
In this appendix, we prove the LFA Theorem (see \cref{thm:LFA}).

\begin{theorem}[Low-Frequency-Approximation Theorem]
  Suppose that the attractors of each host's local dynamics consist of a finite set of stable equilibrium points at which the local-dynamics function $\gi$ is inward pointing, and let each $g^{(i)}$ be continuous and bounded (see \cref{sec:LD}). Fix 
  $\gamma \not\in \mathcal{B}$, all $l_{ij}$, and a frequency-scaled time $T^*$. As $\lambda_{\mathrm{tot}} \to 0$, the basin probability tensor $\Psi(t^*)$ converges uniformly to $\widetilde{\Psi}(t^*)$ on $[0,T^*]$, where
  \begin{align}\tag{\ref{eq:LFA}}
      \frac{\der}{\der t^*}\widetilde\Psi_{b_1,\ldots,b_{|H|}}(t^*) &= \Phi_{b_1,\ldots,b_{|H|},a_1,\ldots,a_{|H|}} \widetilde\Psi_{a_1,\ldots, a_{|H|}}(t^*) - \widetilde\Psi_{b_1,\ldots,b_{|H|}}(t^*) \,, \\
      \widetilde{\Psi}(0) &= \Psi(0) \,. \nonumber
  \end{align}
\end{theorem}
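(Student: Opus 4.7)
The plan is to formalize the heuristic given right after the statement via three ingredients: (i) open neighborhoods of the equilibria on which the pairwise interaction operator $\Phi^{(ij)}$ exactly describes basin transitions; (ii) a uniform return time $\tau > 0$ to these neighborhoods under each local flow; and (iii) a Poisson-process tail bound ruling out two interactions within real time $\tau$ of each other.

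For (i), fix any ordered pair of equilibria $\bm p^{(i)}_a$ of $g^{(i)}$ and $\bm p^{(j)}_b$ of $g^{(j)}$. Because $\gamma \notin \mathcal{B}$, the post-interaction points $(1-\gamma)\bm p^{(i)}_a + \gamma \bm p^{(j)}_b$ and its symmetric partner lie strictly inside the basins of attraction prescribed by the corresponding entry of $\Phi^{(ij)}$. Continuity of the exchange map, together with the fact that the basin indicator is locally constant on the complement of the measure-zero set $\mathcal{U}^{(i)} \cup \mathcal{U}^{(j)}$, lets me choose open neighborhoods $U^{(i)}_a$ and $U^{(j)}_b$ of these equilibria on which the same basin membership holds for every $(\bm x, \bm y) \in U^{(i)}_a \times U^{(j)}_b$. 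Finiteness of the equilibrium sets per host and of the host set reduces this to finitely many compatibility conditions, so the neighborhoods can be chosen uniformly. The inward-pointing and boundedness hypotheses on $g^{(i)}$, combined with continuity of the flow on the compact set $[0,M]^n$, then yield a single time $\tau > 0$ for which $\bm X^{(i)}(\tau, \bm x) \in U^{(i)}_a$ whenever $\bm x$ lies in the basin of $\bm p^{(i)}_a$ at positive distance from $\mathcal{U}^{(i)}$; this is the case immediately after any interaction, again because $\gamma \notin \mathcal{B}$.

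For (iii), set $T = T^*/\ltot$ and let $A_\ltot$ be the event that every pair of consecutive interactions in $[0,T]$ is separated by at least $\tau$ units of real time. Conditioning on the number $N \sim \mathrm{Poisson}(T^*)$ of interactions, memorylessness \cref{eq:mem} and a union bound give $\Pr(A_\ltot^c) \leq \E[(N+1)(1 - e^{-\ltot\tau})] = (T^* + 1)(1 - e^{-\ltot\tau}) \to 0$ as $\ltot \to 0$. On $A_\ltot$, after a frequency-scaled warm-up of duration at most $\ltot \tau$, every host's microbiome abundance vector lies in the appropriate neighborhood $U^{(i)}_a$ at every interaction time, so the deterministic update $\Psi(t_I^+) = \Phi^{(ij)} \Psi(t_I^-)$ is exact at each interaction.

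Conditional on $A_\ltot$ (after the warm-up), the basin configuration evolves as a continuous-time pure-jump Markov process whose jump times form a rate-one Poisson process in $t^*$ and whose one-step kernel is $\Phi$; its marginal distribution is precisely the solution $\widetilde\Psi(t^*)$ of \cref{eq:LFA}. Since the unconditional marginal $\Psi(t^*)$ differs from the conditional one in total variation by at most $\Pr(A_\ltot^c)$, and on the warm-up window $[0,\ltot\tau]$ both $\Psi$ and $\widetilde\Psi$ differ from $\Psi(0)$ by $O(\ltot\tau)$, both contributions vanish uniformly on $[0,T^*]$ as $\ltot \to 0$, giving the claimed convergence. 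The main obstacle is step (i): using $\gamma \notin \mathcal{B}$ and finiteness of the equilibrium sets to produce uniform neighborhoods on which $\Phi^{(ij)}$ faithfully represents the exchange. Without this step the reduction to a Markov jump process breaks down, since $\Phi^{(ij)}$ would no longer be a well-defined deterministic rule on the relevant state space.
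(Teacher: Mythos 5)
Your proposal is correct and follows essentially the same route as the paper's proof: uniform neighborhoods of the equilibria on which each $\Phi^{(ij)}$ exactly encodes the basin transitions (using $\gamma \notin \mathcal{B}$, openness of basins, and finiteness), a uniform return time $\tau$ for the post-interaction states, a probabilistic bound showing that two interactions within real time $\tau$ of each other occur with probability $O(\ltot\tau) \to 0$, and a total-variation comparison of $\Psi(t^*)$ with $\widetilde\Psi(t^*)$ on the complementary event. The only differences are minor: your tail bound via a union bound over Poisson inter-arrival gaps replaces the paper's covering of $[0,T^*]$ by overlapping windows of width $\approx \tau^*$ (both yield $O(T^*\ltot\tau)$), and your justification of the uniform $\tau$ should invoke compactness of the post-interaction sets inside the basins (as the paper does via continuity of the crossing time and the inward-pointing condition) rather than mere ``positive distance from $\mathcal{U}^{(i)}$,'' which on its own would not give a uniform bound.
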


\begin{proof}
Each $g^{(i)}$ is bounded (see \cref{sec:LD}) so, there exists a constant $M$ such that each entry of $\nsup{i}(t)$ is nonnegative and
\begin{equation}
 	\norminf{\nsup{i}(t)} \leq M  
\end{equation}
for each microbiome abundance vector $\nsup{i}(t)$ and all times $t \geq 0$.

Consider an arbitrary but fixed $\varepsilon > 0$. We will show that 
\begin{equation}
      \norminf{\Psi(t^*) - \widetilde{\Psi}(t^*)} < \varepsilon 
\end{equation}
for sufficiently small total-interaction-frequency parameter $\ltot$ and all frequency-scaled times $t^* \in [0,T^*]$.

For each $i$, let $\mathcal{A}_i$ be the set of stable equilibrium points of the local dynamics of host \hsupa{i}. Suppose that adjacent hosts \hsupa{i} and \hsupa{j} interact at time $t_I$ and that $\nsup{i} \left(t_I^-\right)$ and $\nsup{j} \left(t_I^-\right)$ are at stable equilibrium points $\bm{a}^{(i)} \in \mathcal{A}_i$ and $\bm{a}^{(j)} \in \mathcal{A}_j$, respectively. For $\bm{x}, \bm{y} \in [0,M]^n$, let
\begin{equation}
    \mathcal{X}\left(\bm{x},\bm{y}\right) = (1 - \gamma) \bm{x} + \gamma \bm{y} \,.
\end{equation}
After the interaction, $\nsup{i} \left(t_I^+\right) = \mathcal{X}\left(\bm{a}^{(i)}, \bm{a}^{(j)}\right)$ and $\nsup{j} \left(t_I^+\right) = \mathcal{X}\left(\bm{a}^{(j)}, \bm{a}^{(i)}\right)$. Because $\gamma \not\in \mathcal{B}$, it follows that $\mathcal{X}\left(\bm{a}^{(i)}, \bm{a}^{(j)}\right)$ and $\mathcal{X}\left(\bm{a}^{(j)}, \bm{a}^{(i)}\right)$ are in some basins of attraction $b_i$ and $b_j$ for the stable equilibrium points $\bm{b}^{(i)} \in \mathcal{A}_i$ and $\bm{b}^{(j)} \in \mathcal{A}_j$, respectively. 

Let
\begin{align}
    B(\bm{x},\delta) &= \left\{ \bm{y} \mid \normtwo{\bm{y} - \bm{x}} < \delta \,\text{ and }\, \bm{y} \in [0,M]^n \right\} \,, \\
    \overline{B}(\bm{x},\delta) &= \left\{ \bm{y} \mid \normtwo{\bm{y} - \bm{x}}\leq\delta \,\text{ and }\, \bm{y} \in [0,M]^n \right\} \,.
\end{align}
The basins of attraction of stable equilibrium points are open sets, so there exists \linebreak $\delta(\bm{a}^{(i)},\bm{a}^{(j)}) = \delta(\bm{a}^{(j)},\bm{a}^{(i)})$ such that
\begin{align}
    &B\left(\mathcal{X}\left(\bm{a}^{(i)}, \bm{a}^{(j)}\right), \delta(\bm{a}^{(i)},\bm{a}^{(j)})\right) \subseteq b_i \,, \\
    &B\left(\mathcal{X}\left(\bm{a}^{(j)}, \bm{a}^{(i)}\right), \delta(\bm{a}^{(i)},\bm{a}^{(j)})\right) \subseteq b_j \nonumber \,.
\end{align}
All $\mathcal{A}_i$ are finite, so there are minima
\begin{equation}
    \delta_{ij} = \min_{\bm{a}^{(i)} \in \mathcal{A}_i} \min_{\bm{a}^{(j)} \in \mathcal{A}_j}  \delta(\bm{a}^{(i)},\bm{a}^{(j)}) \,.
\end{equation}

Let the flow $\bm{X}^{(i)}(t,\bm{x)}$ be the solution of
\begin{align}
    \frac{\partial \bm{X}^{(i)}}{\partial t}(t,\bm{x}) &= \gi\left(\bm{X}^{(i)}(t,\bm{x})\right) \,, \\
    \bm{X}^{(i)}(0,\bm{x}) &= \bm{x} \,. \nonumber
\end{align}

For each local-dynamics function $\gi$, each $\bm{a}^{(i)} \in \mathcal{A}_i$ is a stable equilibrium point at which $\gi$ is inward pointing. Therefore, there exists $\delta\left(\bm{a}^{(i)}\right)$ such that $\gi(\bm{y}) \cdot \left(\bm{a}^{(i)} - \bm{y}\right) > 0$ for $\bm{y} \in \overline{B}\left(\bm{a}^{(i)},\delta\left(\bm{a}^{(i)}\right)\right)$.
If $\bm{x}$ is in the basin of attraction of $\bm{a}^{(i)}$, then $\bm{X}(t_a,\bm{x}) \in B\left(\bm{a}^{(i)},\delta\left(\bm{a}^{(i)}\right)\right)$ for some time $t_a$. We then have
\begin{align} \label{eq:trajder}
	\left[\frac{\partial}{\partial t} \normtwo{\bm{X}^{(i)} - \bm{a}^{(i)}}^2\right](t_a,\bm{x}) &= \left[2 \frac{\partial \bm{X}^{(i)}}{\partial t} \cdot \left(\bm{X}^{(i)} - \bm{a}^{(i)}\right)\right](t_a,\bm{x}) \,, \\
		&= -2\gi\left(\bm{X}^{(i)}(t_a,\bm{x})\right) \cdot \left(\bm{a}^{(i)} - \left(\bm{X}^{(i)}(t_a,\bm{x})\right)\right) < 0 \,. \nonumber
\end{align}
Consequently, $\normtwo{\bm{X}^{(i)}(t,\bm{x}) - \bm{a}^{(i)}}$ is monotonically decreasing for $t \geq t_a$.
There are only finitely many hosts, so there exists
\begin{align}
    \delta &= \frac{1}{2} \min \left\{\Delta_1 \cup \Delta_2\right\} \, , \\
    \Delta_1 &= \bigcup_i \bigcup_{\bm{a}^{(i)} \in \mathcal{A}_i} \left\{\delta\left(\bm{a}^{(i)}\right)\right\} \,,  \notag \\
    \Delta_2 &= \bigcup_{\{i,j \ \mid \ \left(\hsup{i},\hsup{j}\right) \in E\} } \left\{\delta_{ij} \right\} \,. \notag
\end{align}

For any $\bm{x}, \bm{y} \in [0,M]^n$, let
\begin{equation}
    \mathcal{F}\left(\bm{x}, \bm{y}\right) = \overline{B}\left(\mathcal{X}\left(\bm{x}, \bm{y}\right),\delta \ \right) \,. 
\end{equation}
For each pair of adjacent hosts \hsupa{i} and \hsupa{j} and each $\bm{a}^{(i)} \in \mathcal{A}_i$ and $\bm{a}^{(j)} \in \mathcal{A}_j$, we have
\begin{align}
    \mathcal{F}\left(\bm{a}^{(i)}, \bm{a}^{(j)}\right) &= \overline{B}\left(\mathcal{X}\left(\bm{a}^{(i)}, \bm{a}^{(j)}\right), \delta \ \right)\subset B\left(\mathcal{X}\left(\bm{a}^{(i)}, \bm{a}^{(j)}\right), \delta(\bm{a}^{(i)},\bm{a}^{(j)})\right) \subseteq b_i \,, \\
    \mathcal{F}\left(\bm{a}^{(j)}, \bm{a}^{(i)}\right) &= \overline{B}\left(\mathcal{X}\left(\bm{a}^{(j)}, \bm{a}^{(i)}\right), \delta \ \right) \subset B\left(\mathcal{X}\left(\bm{a}^{(j)}, \bm{a}^{(i)}\right),\delta(\bm{a}^{(i)},\bm{a}^{(j)})\right) \subseteq b_j \nonumber \ 
\end{align}
for some basins of attraction $b_i$ and $b_j$ of the local dynamics of hosts \hsupa{i} and \hsupa{j}, respectively.

Recall that $\mathcal{U}^{(i)} \in [0,M]^n$ is the set of points that are not in the basin of attraction of some stable equilibrium point of the local dynamics of host \hsupa{i}, and let
\begin{equation}
    \mathcal{U} = \bigcup_i \ \mathcal{U}^{(i)} \,.
\end{equation}
Each $\mathcal{U}^{(i)}$ has measure $0$, so $\mathcal{U}$ also has measure $0$. Let $\bm{x} \in [0,M]^n \setminus \mathcal{U}$. For each $i$, the vector $\bm{x}$ is in the basin of attraction of some $\bm{a}^{(i)} \in \mathcal{A}_i$. 
Therefore, there exists some time $t_a$ such that $\normtwo{\bm{X}^{(i)}(t_a,\bm{x}) - \bm{a}^{(i)}} = \delta$. We refer to such a time as a \emph{crossing time}. Moreover, because $2\delta \leq \delta \left(\bm{a}^{(i)}\right)$, it follows from \cref{eq:trajder} that $\normtwo{\bm{X}^{(i)}(t,\bm{x}) - \bm{a}^{(i)}}$ is monotonically decreasing in $t$ on some interval $(t_a - \eta, t_a + \eta)$ for all $t \geq t_a$. Therefore, the crossing time $t_a$ is the unique time that satisfies the equality $\normtwo{\bm{X}^{(i)}(t_a,\bm{x}) - \bm{a}^{(i)}} = \delta$. Because the crossing time is unique, we can define a function $\mathcal{T}^{(i)}(\bm{x})$ such that $\normtwo{\bm{X}^{(i)}\left(\mathcal{T}^{(i)}(\bm{x}),\bm{x}\right) - \bm{a}^{(i)}} = \delta$. This function $\mathcal{T}^{(i)}(\bm{x})$ gives the unique crossing time for a flow that starts at $\bm{x}$.

The local-dynamics function $\gi$ is continuous, so
\begin{equation}
    \mathcal{S}(t,\bm{x}) = \normtwo{\bm{X}^{(i)}(t,\bm{x}) - \bm{a}^{(i)}} - \delta
\end{equation}
is continuously differentiable with respect to both $t$ and $\bm{x}$. Evaluating $\mathcal{S}(t,\bm{x})$ at $t = \mathcal{T}^{(i)}(\bm{x})$ yields $\mathcal{S}\left(\mathcal{T}^{(i)}(\bm{x}),\bm{x}\right) = 0$. The function $\mathcal{S}(t,\bm{x})$ is monotonically decreasing in $t$ on some interval $(\mathcal{T}^{(i)}(\bm{x}) - \eta, \mathcal{T}^{(i)}(\bm{x}) + \eta)$. Therefore, by the Implicit Function Theorem, $\mathcal{T}^{(i)}$ is a continuous function on each basin of attraction of $\gi$.

Because $\mathcal{T}^{(i)}(\bm{x})$ is continuous and each set $\mathcal{F}(\bm{a}^{(i)},\bm{a}^{(j)})$ is compact, there exist times
\begin{align}
	\tau_{ij} &= \max_{\bm{a}^{(i)} \in \mathcal{A}_i, \bm{a}^{(j)} \in \mathcal{A}_j} \left\{\mathcal{T}^{(i)}(\bm{x}) \  \text{ for }\, \bm{x} \in \mathcal{F}\left(\bm{a}^{(i)},\bm{a}^{(j)}\right)\right\} \,, \\
	\tau &= \max_{\{i,j \ \mid \ \left(\hsup{i},\hsup{j}\right) \in E \}} \tau_{ij} \,.
\end{align}

Suppose that an interaction occurs between hosts \hsupa{i} and \hsupa{j} at time $t_{I,1}$. Let 
\begin{align}
    \nsup{i}\left(t_{I,1}^-\right) &\in B\left(\bm{a}^{(i)},\delta\right) \,, \\
    \nsup{j}\left(t_{I,1}^-\right) &\in B\left(\bm{a}^{(j)},\delta\right) \nonumber
\end{align}
for some $\bm{a}^{(i)} \in \mathcal{A}_i$ and $\bm{a}^{(j)} \in \mathcal{A}_j$. We then have 
\begin{align}
    \nsup{i}\left(t_{I,1}^+\right) &\in \mathcal{F}\left(\bm{a}^{(i)},\bm{a}^{(j)}\right) \,, \\
    \nsup{j}\left(t_{I,1}^+\right) &\in \mathcal{F}\left(\bm{a}^{(j)},\bm{a}^{(i)}\right) \,. \nonumber
\end{align}
If the next interaction occurs at time $t_{I,2} > t_{I,1} + \tau$, then 
\begin{align}
    \nsup{i}\left(t_{I,2}^-\right) &\in B\left(\bm{b}^{(i)},\delta\right) \,, \\
    \nsup{j}\left(t_{I,2}^-\right) &\in B\left(\bm{b}^{(j)},\delta\right) \nonumber
\end{align}
for some $\bm{b}^{(i)} \in \mathcal{A}_i$ and $\bm{b}^{(j)} \in \mathcal{A}_j$. If no two interactions occur within time $\tau$ of each other, then 
\begin{equation}
    \nsup{i}\left(t_{I}^-\right) \in B\left(\bm{a}^{(i)},\delta\right)
\end{equation}
for all $i$, all interaction times $t_{I}$, and some $\bm{a}^{(i)} \in \mathcal{A}_i$.
In this case, the effect of each interaction on the basin probability tensor $\Psi$ is described exactly by the operation of the interaction operator
\begin{equation}
	\left(\phi\left(\Psi\left(t_I^-\right)\right)\right)_{b_1,\ldots,b_{|H|}} = \Phi_{b_1,\ldots,b_{|H|},a_1,\ldots,a_{|H|}} \Psi_{a_1,\ldots, a_{|H|}}\left(t_I^-\right) \,.
\end{equation}

Let $\mathcal{I}$ be the set of all possible sets $\Omega = \{t_l^*\}_{l=1}^L$ of frequency-scaled interaction times in the interval $[0,T^*]$. We select a set $\Omega$ of interaction times using a Poisson process on $[0,T^*]$ with rate parameter $1$. 
Let $q : I \to R^+$ be the probability density function for these interactions. Therefore,
\begin{equation}
    \Pr\left(\Omega \in \mathcal{J}\right) = \int_\mathcal{J} q\left(\Omega'\right) \der\Omega' 
\end{equation}
for any $\mathcal{J} \subseteq \mathcal{I}$. We define a counting function
\begin{equation}
    \nu(\Omega,t^*) = \bigg|\left\{t_l^* \in \Omega \mid t_l^* \leq t^*\right\} \bigg| 
\end{equation}
that tracks the number of interactions that occur in the interval $[0,t^*]$ for the set $\Omega$. The approximate basin probability tensor {$\widetilde{\Psi}(t^*)$ from the LFA (see \cref{eq:LFA})} conditioned on $\Omega$ is
\begin{equation}
    \widetilde\Psi^{(\Omega)}(t^*) = \phi^{\nu(\Omega,t^*)}\left(\Psi(0)\right) \,.
\end{equation}
Therefore,
\begin{equation}
    \widetilde\Psi(t^*) = \int_\mathcal{I} q(\Omega') \widetilde{\Psi}^{(\Omega')}(t^*) \, \der\Omega' \,.
\end{equation}

Let $\Psi^{(\Omega)}(t^*)$ be the basin probability tensor conditioned on $\Omega$. We then have
\begin{equation}
    \Psi(t^*) = \int_\mathcal{I} q(\Omega') \Psi^{(\Omega')}(t^*) \, \der\Omega' \,.
\end{equation}
If $\Omega = \{t_l^*\}_{l = 1}^L$ satisfies
\begin{equation} \label{eq:spacing}
    \min_{l\in\{2,\ldots,L\}} \{t_l^* - t_{l - 1}^*\} > \tau^* \,,
\end{equation}
then
\begin{equation}
    \Psi^{(\Omega)}(t^*) = \Phi^{\nu(\Omega,t^*)} \left(\Psi(0)\right) \,.
\end{equation}

Let $\mathcal{I}_1 \subset \mathcal{I}$ be the set of interaction sets $\Omega = \{t_l^*\}_{l = 1}^L$ for which \cref{eq:spacing} holds, and let $\mathcal{I}_2 = \mathcal{I} \setminus \mathcal{I}_1 \subset \mathcal{I}$. Choose $dt^*$ such that $\tau^* < dt^* < 2\tau^*$ and ${T^*} / {dt^*}$ is an integer. For sufficiently small $\tau^*$, this is always possible. If two interaction times $t_l^*$ and $t_{l - 1}^*$ occur within $\tau^*$ of each other, then both interactions occur in an interval $\left[k \, dt^*,(k - 1) dt^*\right]$ and/or an interval $\left[\left(k + \frac{1}{2}\right) dt^*,\left(k - \frac{1}{2}\right) dt^*\right]$ for an integer $k$. (These two types of intervals overlap.) All of these intervals have width $dt^*$. Therefore, the probability that at least two interactions occur in a specific one of these intervals is
\begin{align}
    \Pr\bigg(\nu\left(\Omega, dt^*\right) \geq 2\bigg) &= 1 - e^{dt^*} - dt^* e^{dt^*} \\ 
    	&= 1 - e^{dt^*} (1 - dt^*) \nonumber \\
    	&\leq 1 - (1 + dt^*) (1 - dt^*) \nonumber\\
    	&= (dt^*)^2 \, . \nonumber
\end{align}
There are $2 \frac{T^*}{dt^*} - 1$ such intervals. Therefore, the probability that at least two interactions occur in at least one of these intervals is less than $\left(2 \frac{T^*}{dt^*} - 1\right) \left(dt^*\right)^2$. This probability equals the probability that a set $\Omega$ of interactions satisfies $\Omega \in \mathcal{I}_2$. Consequently,
\begin{align}
    \Pr\left(\Omega \in \mathcal{I}_2\right) & \leq \left(2 \frac{T^*}{dt^*} - 1\right) (dt^*)^2 \\
    	&< 2 \frac{T^*}{dt^*} (dt^*)^2 \nonumber \\ 
	&= 2T^* dt^* \nonumber \\
    	&\leq 4 T^* \tau^* \nonumber  \\
    	&= 4 T^* \ltot \tau \,. \nonumber
\end{align}
We choose $\ltot < \frac{\varepsilon}{4T^* \tau}$, and we then have
\begin{align}
    \Psi(t^*) - \widetilde\Psi(t^*) &= \int_\mathcal{I} q(\Omega') \Psi^{(\Omega')}(t^*) \, \der\Omega' - \int_\mathcal{I} q(\Omega') \widetilde{\Psi}^{(\Omega')}(t^*) \, \der\Omega' \\
    	&= \int_\mathcal{I} q(\Omega') \left[\Psi^{(\Omega')}(t^*) - \widetilde\Psi^{(\Omega')}(t^*)\right] \der\Omega' \nonumber \\
    	&= \int_{\mathcal{I}_1} q(\Omega') \left[\Psi^{(\Omega')}(t^*) - \widetilde\Psi^{(\Omega')}(t^*)\right] \der\Omega' + \int_{\mathcal{I}_2} q(\Omega') \left[\Psi^{(\Omega')}(t^*) - \widetilde\Psi^{(\Omega')}(t^*)\right] \der\Omega' \nonumber \\
    	&= \int_{\mathcal{I}_2} q(\Omega') \left[\Psi^{(\Omega')}(t^*) - \widetilde\Psi^{(\Omega')}(t^*)\right] \der\Omega' \,. \nonumber
\end{align}
Therefore,
\begin{equation}
    \norminf{\Psi(t^*) - \widetilde\Psi(t^*)} \leq \int_{\mathcal{I}_2} q(\Omega') \norminf{\Psi^{(\Omega')}(t^*) - \widetilde\Psi^{(\Omega')}(t^*)} \der\Omega' \,. 
\end{equation}
Each entry of $\Psi(t^*)$ and $\widetilde\Psi(t^*)$ is a probability and hence is in the interval $[0,1]$, so we know that $\norminf{\Psi(t^*) - \widetilde\Psi(t^*)} \leq 1$. Therefore,
\begin{align}
    \norminf{\Psi(t^*) - \widetilde\Psi(t^*)} &\leq \int_{\mathcal{I}_2} q(\Omega') \der\Omega' \\
    &= \Pr\left(\Omega \in \mathcal{I}_2\right) \nonumber \\
    &\leq 4 T^* \ltot \tau  \nonumber \\
    &< \varepsilon \,. \nonumber
\end{align}
This bound holds for all $t^* \in [0,T^*]$. Because $\varepsilon > 0$ is arbitrary, the basin probability tensor $\Psi(t)$ converges uniformly to $\widetilde{\Psi}(t^*)$ on $[0,T^*]$ as $\lambda_{\mathrm{tot}} \to 0$.
\end{proof}



\section{Proof of High-Frequency Low-Strength Approximation Theorem} \label{sec:HFLSA Proof}

In this appendix, we prove the HFLSA Theorem (see \cref{thm:HFLSA}).

\begingroup
\begin{theorem}[High-Frequency, Low-Strength Approximation Theorem]
  Fix the relative \linebreak 
  interaction-frequency parameters $l_{ij}$, the product $\ltot \gamma$, and a time $T$. Let each local-dynamics function $g^{(i)}$ be continuously differentiable and bounded (see \cref{sec:LD}), and let $\varepsilon \in (0,1]$ and $\delta > 0$ be arbitrary but fixed. For sufficiently large $\ltot$, each host's microbiome abundance vector $\nsup{i}(t)$ satisfies
  \begin{equation} \tag{\ref{eq:HFLSA bound}}
      \normLinf{\nsup{i} - \nsupapp{i}}{[0,T]} < \delta
  \end{equation}
  with probability larger than $1 - \varepsilon$, where
\begin{align}\tag{\ref{eq:HFLSA}}
        \frac{\der \nsupapp{i}}{\der t} &= g^{(i)}(\nsupapp{i}) + \sum_j \lambda_{ij} \gamma \left(\nsupapp{j} - \nsupapp{i}\right) \,, \\
        \nsupapp{i}(0) &= \nsup{i}(0) \,. \nonumber
\end{align}
\end{theorem}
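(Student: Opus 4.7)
My plan is to express each host's process $\nsup{i}(t)$ via a Doob--Meyer-style decomposition, compare it to the deterministic ODE \cref{eq:HFLSA} by a Grönwall argument on the pooled error, and then control the martingale part of the decomposition in sup-norm with high probability using its predictable quadratic variation. I would write
\begin{equation*}
  \nsup{i}(t) = \nsup{i}(0) + \int_0^t \gi\bigl(\nsup{i}(s)\bigr)\,ds + \int_0^t \sum_j \lambda_{ij}\gamma\bigl(\nsup{j}(s) - \nsup{i}(s)\bigr)\,ds + M^{(i)}(t),
\end{equation*}
where $M^{(i)}$ is the purely discontinuous martingale obtained by compensating the interaction-driven jump process; the mean computation in \cref{eq:exsum} identifies the compensator. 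Both $\nsup{i}(t)$ and $\nsupapp{i}(t)$ remain in the compact set $[0,M]^n$ (for \cref{eq:HFLSA}, the added exchange term is inward-pointing on $\partial[0,M]^n$ because it is a convex-combination correction), so each $\gi$ is Lipschitz with some constant $L$ on the relevant region.

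Next I would form the error $\bm{e}^{(i)}(t) := \nsup{i}(t) - \nsupapp{i}(t)$, subtract the integral equation for $\nsupapp{i}$ from the one above, and take norms. Writing $E(t) := \sum_i \norminf{\bm{e}^{(i)}(t)}$ and $d_i := \sum_j \lambda_{ij}$, and using $\max_i d_i \leq \sum_i d_i = 2\ltot$ together with symmetry of $\lambda_{ij}$, I obtain
\begin{equation*}
  E(t) \leq (L + 4\ltot\gamma)\int_0^t E(s)\,ds + \sum_i \sup_{s \in [0,T]}\norminf{M^{(i)}(s)}.
\end{equation*}
Grönwall's inequality then gives $E(t) \leq e^{(L + 4\ltot\gamma)T}\sum_i \sup_{s \leq T}\norminf{M^{(i)}(s)}$ for every $t \in [0,T]$, and because $\ltot\gamma$ is held fixed the prefactor is a constant depending only on $L$, $T$, and $\ltot\gamma$.

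The remaining step is to show that $\max_i \sup_{t\leq T}\norminf{M^{(i)}(t)}$ is arbitrarily small with probability at least $1 - \varepsilon$ for $\ltot$ sufficiently large. Each jump of $M^{(i)}$ has magnitude at most $\gamma M$, and the predictable quadratic variation of each coordinate $M^{(i)}_x$ satisfies $\langle M^{(i)}_x\rangle_T \leq d_i \gamma^2 M^2 T \leq 2 M^2 T (\ltot\gamma)\gamma$. Since $\gamma = (\ltot\gamma)/\ltot \to 0$ as $\ltot \to \infty$ with $\ltot\gamma$ fixed, this bound tends to zero. Doob's $L^2$ maximal inequality then yields $\E\bigl[\sup_{t \leq T}|M^{(i)}_x(t)|^2\bigr] \leq 4\E\bigl[\langle M^{(i)}_x\rangle_T\bigr] = O(\gamma)$, and a Chebyshev estimate followed by a union bound over the finite index set of pairs $(i,x)$ finishes the probability control. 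I expect the main obstacle to be rigorously setting up the martingale bookkeeping (identifying the compensator precisely from the marked point process of interactions, and justifying Doob's $L^2$ inequality for this pure-jump martingale) together with verifying the invariance of $[0,M]^n$ under \cref{eq:HFLSA} so that the Lipschitz constant $L$ may be used globally; once those ingredients are in hand, the Grönwall step and the quadratic-variation calculation are routine.
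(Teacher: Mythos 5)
Your proposal is correct, but it takes a genuinely different route from the paper's proof. The paper avoids stochastic-calculus machinery entirely: it discretizes $[0,T]$ into intervals of width $dt$ chosen so that $dt^4 \leq \gamma \leq 4\,dt^4$, approximates the jump at each interaction by the ``frozen'' increment $\gamma(\nsup{j}(t_k)-\nsup{i}(t_k))$, bounds the deviation of the summed jumps from their mean on each subinterval by Chebyshev's inequality applied to the Poisson-compound sum, takes a union bound over the $T/dt$ subintervals and over a Chebyshev bound on the interaction counts $L_k$, and closes the argument with a discrete Gr\"onwall recursion $E_{k+1}\leq(1+Y\,dt)E_k+Z\,dt^2$ followed by interpolation between grid points. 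You instead keep time continuous, compensate the marked point process to obtain the purely discontinuous martingale $M^{(i)}$, run a continuous Gr\"onwall estimate on the pooled error (valid for the c\`adl\`ag, bounded function $E(t)$ even though it jumps), and control $\sup_{t\leq T}\norminf{M^{(i)}(t)}$ via the predictable quadratic variation bound $\langle M^{(i)}_x\rangle_T \leq 2M^2T(\ltot\gamma)\gamma$, Doob's $L^2$ maximal inequality, Chebyshev, and a union bound over the finitely many coordinates. Both arguments are sound; your two flagged obstacles (invariance of $[0,M]^n$ under \cref{eq:HFLSA}, which the paper also verifies via the inward-pointing boundary check, and the identification of the compensator, which is exactly the mean computation in \cref{eq:exsum}) are genuine but routine. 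What your approach buys is brevity, a sharper deviation rate ($O(\sqrt{\gamma})$ with high probability, versus the paper's $O(\gamma^{1/4})$), and no need for the ad hoc coupling $dt^4\leq\gamma\leq 4\,dt^4$ or the error bookkeeping for the frozen-increment approximations $\etai{\,local}{k,l}$ and $\etai{\,exchange}{k,l}$; what the paper's approach buys is complete elementarity, requiring nothing beyond Taylor's theorem, Chebyshev's inequality, and basic properties of Poisson processes.
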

\endgroup

\begin{proof}
Each local-dynamics function $g^{(i)}$ is bounded (see \cref{sec:LD}), so there exists a constant $M$ such that 
\begin{equation}
 	\norminf{\nsup{i}(t)} \leq M   
\end{equation}
and each entry of $\nsup{i}(t)$ is nonnegative for each microbiome abundance vector $\nsup{i}(t)$ and all times $t \geq 0$. Each approximate microbiome abundance vector $\nsupapp{i}(t)$ also satisfies
\begin{equation}
 \norminf{\nsupapp{i}(t)} \leq M   
\end{equation}
because
\begin{align}
    \left(\sum_j \lambda_{ij} \gamma \left(\nsupapp{j} - \nsupapp{i}\right)\right)_x &\geq 0 \quad \text{if }\,\,  \nsupapp{i}_x = 0 \,, \\
    \left(\sum_j \lambda_{ij} \gamma \left(\nsupapp{j} - \nsupapp{i}\right)\right)_x &\leq 0 \quad \text{if } \,\, \nsupapp{i}_x = M  \notag
\end{align}
for each entry $x$ of $\nsupapp{i}(t)$.

We also assume that each local-dynamics function $g^{(i)}$ is continuously differentiable and hence continuous. Each host's microbiome abundance vector $\nsup{i}(t)$ is in the region $[0,M]^{n}$, which is compact, so there exist constants $G$ and $F$ such that all $\nsup{i}(t)$ satisfy the bounds
\begin{align} \label{eq:LD Bound 1}
        \norminf{\frac{\der \nsup{i}}{\der t}} &= \norminf{g^{(i)}\left(\nsup{i}\right)} \leq G \,, \\
        \norminf{\frac{\der^2 \nsup{i}}{\der t ^2}} &= \norminf{Dg^{(i)}\left(\nsup{i}\right) \cdot g^{(i)}\left(\nsup{i}\right)}  \leq 2F \,.  \nonumber
\end{align}
For an interaction involving host \hsupa{i} that occurs at time $t_I$, we choose $\nsup{i}\left(t_I\right) = \nsup{i}\left(t_I^+\right)$. Therefore, $\nsup{i}(t)$ is right-continuous at time $t_I$. It is usually not left-continuous at time $t_I$, so it is usually not left-differentiable at time $t_I$.\footnote{The only situation where $\nsup{i}(t)$ is left-continuous at time $t_I$ occurs when the host \hsupa{j} with which \hsupa{i} interacts has a microbiome vector $\nsup{j}(t_I^-) = \nsup{i}(t_I^-)$.} In such situations, the derivatives that we use in \cref{eq:LD Bound 1} are right derivatives. 
By Taylor's theorem,
\begin{align}
        \norminf{\nsup{i}(t + dt) - \nsup{i}(t)} &\leq G \, dt \,, \\
         \norminf{\nsup{i}(t + dt) - \nsup{i}(t) - \gi \left(N^i(t)\right) dt} &\leq F \, dt^2 \notag
\end{align}
for any interval $(t,t + dt]$ in which there are no interactions. Each local-dynamics function $\gi$ is also Lipschitz continuous. Therefore, there exists a constant $C$ such that 
\begin{equation}
    \norminf{g^{(i)}\left(\bm{x}\right) - g^{(i)}\left(\bm{y}\right)} \leq C \norminf{\bm{x} - \bm{y}}
\end{equation}
for each $\gi$ and all $\bm{x},\bm{y} \in [0,M]^{n}$.

Let $\widetilde{G} = G + M \ltot \gamma$, which is a constant because $\ltot \gamma$ is fixed. For all host microbiome abundance vectors $\nsupapp{i}(t)$, we have
\begin{align}
    \norminf{\frac{\der \nsupapp{i}}{\der t}} &= \norminf{g^{(i)}\left(\nsupapp{i}\right) + \sum_j \lambda_{ij} \gamma \left(\nsupapp{j} - \nsupapp{i}\right)} \\
    	&\leq G + \sum_j \lambda_{ij} \gamma \norminf{\nsupapp{j} - \nsupapp{i}} \leq G + M \ltot \gamma  = \widetilde G \,. \nonumber
\end{align}
Let $\widetilde{F} = F + G \ltot \gamma + M \ltot^2 \gamma^2$. For all $\nsupapp{i}(t)$, we have
\begin{align}
         \norminf{\frac{\der^2 \nsupapp{i}}{\der t ^2}} 
         &= \norminf{
         	\begin{aligned}
         	   &Dg^{(i)}\left(\nsupapp{i}\right) \cdot g^{(i)}\left(\nsupapp{i}\right) \\
         	   &\qquad + \sum_j \lambda_{ij} \gamma \left[ g^{(j)}\left(\nsupapp{j}\right) + \sum_k \lambda_{jk} \gamma \left(\nsupapp{k}-\nsupapp{j}\right) \right] \\
         	   &\qquad + \sum_j \lambda_{ij} \gamma \left[ g^{(i)}\left(\nsupapp{i}\right) + \sum_k \lambda_{ik} \gamma \left(\nsupapp{k}-\nsupapp{i}\right) \right]
        		\end{aligned}} \\
        &\leq 2F + G \gamma \sum_j \lambda_{ij} + M \gamma^2 \sum_j \sum_k \lambda_{ij} \lambda_{jk} + G \gamma \sum_j \lambda_{ij} + M \gamma^2 \sum_j \sum_k \lambda_{ij} \lambda_{ik} \nonumber\\
        &\leq 2F + 2G \ltot \gamma + 2M \ltot^2 \gamma^2 = 2\widetilde F \,. \nonumber
\end{align}
 By Taylor's theorem, 
\begin{align}
        \norminf{\ \nsupapp{i}(t + dt) - \nsupapp{i}(t) \ } &\leq \widetilde G \, dt \,, \\
        \norminf{ \ \nsupapp{i}(t + dt) - \nsupapp{i}(t) - \left[\gi \left(\nsupapp{i}(t)\right) + \sum_j \lambda_{ij} \gamma \left(\nsupapp{j}(t) - \nsupapp{i}(t)\right) \right] dt } &\leq \widetilde F \, dt^2 \notag
\end{align}
for any times $t$ and $t + dt$.

We phrased {\cref{thm:HFLSA} in terms of finding a sufficiently large total-interaction-frequency parameter} $\ltot$ so that
\begin{equation}
      \normLinf{\nsup{i} - \nsupapp{i}}{[0,T]} < \delta
  \end{equation}
with probability larger than $1 - \varepsilon$. In \cref{thm:HFLSA}, we assume that the product $\ltot \gamma$ is fixed, so finding a sufficiently large $\ltot$ is equivalent to finding a sufficiently small $\gamma$. We define the error term
  \begin{equation}
    \Ei(t) = \nsup{i}(t) - \nsupapp{i}(t) \, .
\end{equation}
We will show that one can bound each $\norminf{E^{(i)}(t)}$ with probability larger than $1 - \varepsilon$ by a term that involves $\gamma$. For sufficiently small $\gamma$, we will show that each
\begin{equation}
    \norminf{\Ei(t)} \leq \delta 
\end{equation}
for all $t \in [0,T]$ with probability larger than $1 - \varepsilon$.

Fix a $dt$ such that $dt^4 \leq \gamma \leq 4 \, dt^4 < 1$ and {${T}/{dt}$ is an integer}. 
This is always possible for sufficiently small $\gamma$. Let $t_k = k \, dt$. There are only finitely many $t_k$ in the interval $[0,T]$. The probability that an interaction occurs precisely at any of these $t_k$ is $0$. Therefore, for the remainder of this proof, we only consider interactions that occur at times $t \neq t_k$ for any $k$.
Under this assumption,
\begin{equation}
    \nsup{i}(t_k^-) = \nsup{i}(t_k)
\end{equation}
for all $t_k \in [0,T]$.

We now consider how a microbiome abundance vector $\nsup{i}(t)$ changes over an interval $[t_k,t_{k + 1}]$. Let $L_k$ be the number of interactions that occur in $(t_k,t_{k + 1})$. (This interval is open because no interactions occur at any of the $t_k$.) We denote the associated ordered set of interactions by $\{t_{k,l}\}_{l = 1}^{L_k}$. Additionally, we let $t_{k,0} = t_k$ and $t_{k,L_k + 1} = t_{k + 1}$, and we define $dt_{k,l} = t_{k,l} - t_{k,l - 1}$. For $l \in \{1,\ldots,L_k + 1\}$, let
\begin{equation}
    \Ai_{k,l} = \nsup{i}\left(t_{k,l}^-\right) - \nsup{i}(t_{k}) \,.
\end{equation}
 For $l \in \{1,\ldots,L_k\}$, let
\begin{equation}
    \Ji_{k,l} = \nsup{i}\left(t_{k,l}^+\right) - \nsup{i}\left(t_{k,l}^-\right) \,.
\end{equation}
The difference $\Ji_{k,l}$ indicates the change in $\nsup{i}(t)$ after an interaction at time $t_{k,l}$. If this interaction does not involve host \hsupa{i}, then $\Ji_{k,l} = \bm{0}$. Otherwise, for an interaction between hosts \hsupa{i} and \hsupa{j}, we have
\begin{equation}
    \Ji_{k,l} = \gamma \left(\nsup{j}\left(t_{k,l}^-\right) - \nsup{i}\left(t_{k,l}^-\right)\right) \, .
\end{equation}
In either case, $\norminf{\Ji_{k,l}} \leq M \gamma$. For $l \geq 2$, we decompose the difference $\Ai_{k,l}$ by writing
\begin{align}
        \Ai_{k,l} &= \nsup{i}\left(t_{k,l}^-\right) - \nsup{i}(t_{k,l - 1}^+) + \nsup{i}(t_{k,l - 1}^+) - \nsup{i}(t_{k,l - 1}^-) \\
        &\qquad + \nsup{i}(t_{k,l - 1}^-) - \nsup{i}(t_{k}) \nonumber \\
        &= \nsup{i}\left(t_{k,l}^-\right) - \nsup{i}(t_{k,l - 1}^+) + \Ji_{k,l - 1} + \Ai_{k,l - 1}  \,. \nonumber
    \end{align}
Therefore,
\begin{align}
    \norminf{\Ai_{k,l}} & \leq \norminf{\nsup{i}\left(t_{k,l}^-\right) - \nsup{i}(t_{k,l - 1}^+)} + \norminf{\Ji_{k,l - 1}} + \norminf{\Ai_{k,l - 1}}  \\
        &  \leq \norminf{\Ai_{k,l - 1}} + G \, dt_{k,l} + M \gamma \,. \nonumber
\end{align}
For $l = 1$, we have
\begin{equation}
    \norminf{\Ai_{k,1}} \leq \norminf{\nsup{i}(t_{k,1}^-) - \nsup{i}(t_{k})} \leq  G \, dt_{k,1}\,.
\end{equation}
Therefore,
\begin{equation}
    \norminf{\Ai_{k,l}} \leq \sum_{l'=1}^l G \, dt_{k,l'} + (l - 1) M \gamma \leq \sum_{l'=1}^{L_k+1} G \, dt_{k,l'} + L_k M \gamma = G \, dt + L_k M \gamma \,.
\end{equation}

The error between the actual microbiome abundance vector $\nsup{i}(t_k)$ and the approximate microbiome abundance vector $\nsupapp{i}(t_k)$ is
\begin{equation}
        \Ei_k = \nsup{i}(t_k) - \nsupapp{i}(t_k) \,.
\end{equation}
Consider the difference
\begin{align} \label{eq:error diff}
    \Ei_{k + 1} - \Ei_{k}  &= \left[\nsup{i}(t_{k + 1}) - \nsupapp{i}(t_{k + 1})\right] - \left[\nsup{i}(t_k) - \nsupapp{i}(t_k) \right] \\
    		&= \left[\nsup{i}(t_{k + 1}) - \nsup{i}(t_{k})\right] - \left[\nsupapp{i}(t_{k + 1}) - \nsupapp{i}(t_{k})\right] \,. \nonumber
\end{align}
We have
\begin{equation} \label{eq:insert1}
    \nsupapp{i}(t_{k + 1}) - \nsupapp{i}(t_{k}) = \left[\gi \left(\nsupapp{i}(t_k)\right) + \sum_j \lambda_{ij} \gamma \left(\nsupapp{j}(t_k) - \nsupapp{i}(t_{k})\right) \right] dt + \etai{\,approx}{k} \,,
\end{equation}
where
\begin{equation}
    \norminf{\etai{\,approx}{k}} \leq \widetilde F \, dt^2 \, .
\end{equation}

The change of the microbiome abundance vector $\nsup{i}(t)$ over the interval $[t_k,t_{k + 1}]$ is
\begin{align} \label{eq:insert2}
        \nsup{i}(t_{k + 1}) - \nsup{i}(t_{k}) &= \sum_{l = 1}^{L_k + 1} \left[ \nsup{i}\left(t_{k,l}^-\right) - \nsup{i}\left(t_{k,l - 1}^-\right) \right]  \\
        		&= \sum_{l = 1}^{L_k + 1} \!\left[ \nsup{i}\left(t_{k,l}^-\right) - \nsup{i}\left(t_{k,l - 1}^+\right)\right] + \!\sum_{l = 2}^{L_k + 1} \!\left[ \nsup{i}\left(t_{k,l - 1}^+\right) - \nsup{i}\left(t_{k,l - 1}^-\right) \right]  \nonumber \\
        		&= \sum_{l = 1}^{L_k + 1} \left[ \gi \left(\nsup{i}\left(t_k\right) \right) \, dt_{k,l} + \etai{\,local}{k,l} \right] + \sum_{l = 1}^{L_k} \Ji_{k,l}  \nonumber\\
        		&= \gi\left(\nsup{i}\left(t_k\right)\right) dt + \sum_{l = 1}^{L_k + 1} \left[\etai{\,local}{k,l}\right] + \sum_{l = 1}^{L_k} \Ji_{k,l} \,, \nonumber
    \end{align}
where
\begin{align}
    \etai{\,local}{k,l} &= \left[\nsup{i}\left(t_{k,l}^-\right) - \nsup{i}\left(t_{k,l-1}^+\right) - \gi\left(\nsup{i}\left(t_{k,l}^-\right) \right) \, dt_{k,l} \right] \\
        &\qquad + \left[ \gi\left(\nsup{i}\left(t_{k,l}^-\right) \right) \, dt_{k,l} - \gi\left(\nsup{i}\left(t_k\right) \right) \, dt_{k,l} \right] \nonumber \,.
\end{align}
Therefore,
\begin{align} \label{eq:etalockl}
    \norminf{\etai{\,local}{k,l}} & \leq F \, dt_{k,l}^2 + C \norminf{\nsup{i}\left(t_{k,l}^-\right) - \nsup{i}\left(t_{k}\right)} \, dt_{k,l}  \\ 
    &= F \, dt_{k,l}^2 + C \norminf{\Ai_{k,l}} \, dt_{k,l} \nonumber \\
    & \leq F \, dt_{k,l}^2 + C G \, dt \, dt_{k,l} + C L_k M \gamma \, dt_{k,l} \nonumber \\
    & \leq \left(F \, dt + C G \, dt + C L_k M \gamma \right) dt_{k,l} \,. \nonumber
\end{align}

We now consider 
\begin{equation}
    \sum_{l=1}^{L_k} \Ji_{k,l} \,,
\end{equation}
which is the sum of the changes {in microbiome abundance vector $\nsup{i}(t)$} due to the interactions that host \hsupa{i} has with other hosts. Let
\begin{equation}
        \Jiapp_{k,l} = \begin{cases}
            \bm{0}\,, & \!\!\!\text{the interaction at time } t_{k,l} \text{ does not involve \hsupa{i}} \\
            \gamma\left(\nsup{j}(t_{k}) - \nsup{i}(t_{k})\right)\,, & \!\!\!\text{the interaction at time } t_{k,l} \text{ is between \hsupa{i} and \hsupa{j}} \, .
        \end{cases}
\end{equation}
We then have
\begin{equation} \label{eq:sum ex}
    \sum_{l = 1}^{L_k} \Ji_{k,l} = \sum_{l=1}^{L_k} \left(\Jiapp_{k,l} + \etai{\,exchange}{k,l}\right) \,,
\end{equation}
where
\begin{equation}
    \etai{\,exchange}{k,l} = \Ji_{k,l} - \Jiapp_{k,l} \,.
\end{equation}
If the interaction at time $t_{k,l}$ does not involve host \hsupa{i}, then $\etai{\,exchange}{k,l} = \bm{0}$. Otherwise, for an interaction at time $t_{k,l}$ between hosts \hsupa{i} and \hsupa{j}, we have
\begin{align}
        \etai{\,exchange}{k,l} &= \gamma \left(\nsup{j}(t_{k})-\nsup{i}(t_{k})\right) - \gamma\left(\nsup{j}\left(t_{k,l}^-\right) -\nsup{i}\left(t_{k,l}^-\right)\right)  \\
        &= \gamma \left[\left(\nsup{j}(t_{k}) - \nsup{j}\left(t_{k,l}^-\right)\right) - \left(\nsup{i}(t_{k}) - \nsup{i}\left(t_{k,l}^-\right)\right)\right] \,. \nonumber
    \end{align}
In either case,
\begin{align} \label{eq:etaexchkl}
        \norminf{\etai{\,exchange}{k,l}} &\leq  \gamma \left(\norminf{\nsup{j}(t_{k}) - \nsup{j}\left(t_{k,l}^-\right)} + \norminf{\nsup{i}(t_{k}) - \nsup{i}\left(t_{k,l}^-\right)}\right) \\
        &= \gamma \left(\norminf{A^{(j)}_{k,l}} + \norminf{\Ai_{k,l}}\right) \nonumber \\
        &\leq 2 G \gamma \, dt + 2L_k M \gamma^2 \,. \nonumber
    \end{align}

We seek to bound
\begin{equation} \label{eq:sumnorm}
    \norminf{\sum_{l = 1}^{L_k} \Jiapp_{k,l} - \E\left[\sum_{l = 1}^{L_k} \Jiapp_{k,l}\right]} \,.
\end{equation}
There are two sources of stochasticity in the sum
\begin{equation} \label{eq:sumjapp}
    \sum_{l = 1}^{L_k} \Jiapp_{k,l} \,.
\end{equation}
The number $L_k$ of interactions follows a Poisson distribution with mean $\ltot dt$, and the term $\Jiapp_{k,l}$ depends on which pair of hosts interacts at time $t_{k,l}$. Because the sum \cref{eq:sumnorm} is stochastic, we are only able to bound it with high probability.

The quantity $\Jiapp_{k,l}$ is vector-valued, and we denote an entry $x$ of it by $\left(\Jiapp_{k,l}\right)_x$. The sum \cref{eq:sumjapp} is also vector-valued, and we denote an entry $x$ of it by $\left(\sum_{l = 1}^{L} \Jiapp_{k,l}\right)_x$. We now calculate the expectation and the variance of each entry of \cref{eq:sumjapp}. The interaction at time $t_{k,l}$ is between hosts \hsupa{i} and \hsupa{j} with probability ${\lambda_{ij}}/{\ltot}$. Therefore,
\begin{align}
    \E \left[L_k\right] &= \ltot  dt \,, \\
    \Var \left[L_k\right] &= \ltot  dt \,, \nonumber \\
    \E \left[ \left(\Jiapp_{k,l}\right)_x \right] &= \sum_j \frac{\lambda_{ij}}{\ltot}\gamma\left(\nsup{j}(t_{k}) - \nsup{i}(t_{k})\right)_x \,, \nonumber \\
    \Var \left[ \left(\Jiapp_{k,l}\right)_x \right] &= \E \left[ \left(\Jiapp_{k,l}\right)_x^2\right] - \left(\E \left[ \left(\Jiapp_{k,l}\right)_x \right]\right)^2 \,. \nonumber
\end{align}

The expectation of each entry of the sum \cref{eq:sumjapp} is
\begin{align}
    \E \left[\left(\sum_{l = 1}^{L_k} \Jiapp_{k,l}\right)_x\right] &= \E\left[\;\E \left[\left(\sum_{l = 1}^{L_k} \Jiapp_{k,l}\right)_x \;\middle|\; L_k\right]\;\right]  \\
    	&= \E \left[L_k  \sum_j \frac{\lambda_{ij}}{\ltot} \gamma \left(\nsup{j}(t_{k}) - \nsup{i}(t_{k})\right)_x \right] \nonumber\\
    	&= \ltot  dt  \sum_j \frac{\lambda_{ij}}{\ltot} \gamma \left(\nsup{j}(t_{k}) - \nsup{i}(t_{k})\right)_x \nonumber\\
    	&= \sum_j \lambda_{ij} \gamma \left(\nsup{j}(t_{k}) - \nsup{i}(t_{k})\right)_x dt \,. \nonumber
\end{align}
Therefore,
\begin{equation}
    \E \left[\sum_{l=1}^{L_k} \Jiapp_{k,l}\right] = \sum_j \lambda_{ij} \gamma \left(\nsup{j}(t_{k}) - \nsup{i}(t_{k})\right)dt \,.
\end{equation}

Applying the law of total variance, the variance of each entry of the sum \cref{eq:sumjapp} is
\begin{align}
        \Var\left[\left(\sum_{l = 1}^{L_k} \Jiapp_{k,l}\right)_x\right] &= \E\left[\; \Var \left[\left(\sum_{l=1}^{L_k} \Jiapp_{k,l}\right)_x\;\middle|\; L_k \right]\; \right] + \Var\left[ \; \E \left[\left(\sum_{l = 1}^{L_k} \Jiapp_{k,l}\right)_x\;\middle|\; L_k \right]\; \right] \\
        		&= \E\left[L_k \Var \left[\left(\Jiapp_{k,l}\right)_x \right]\; \right] + \Var\left[ \; L_k\E \left[\left(\Jiapp_{k,l}\right)_x \right]\; \right] \nonumber\\
        		&= \ltot dt \,\Var \left[\left(\Jiapp_{k,l}\right)_x \right] + \ltot  dt \left(\E \left[\left(\Jiapp_{k,l}\right)_x \right]\right)^2 \nonumber\\
        		&=  \ltot dt \,\E \left[\left(\Jiapp_{k,l}\right)_x^2\right] \nonumber\\
        		&=  \ltot  dt \sum_j \frac{\lambda_{ij}}{\ltot}\gamma^2\left(\nsup{j}(t_{k})-\nsup{i}(t_{k})\right)_x^2  \nonumber\\
        		&=   \sum_j \lambda_{ij}\gamma^2\left(\nsup{j}(t_{k}) - \nsup{i}(t_{k})\right)_x^2 dt \,. \nonumber
    \end{align}
These variances satisfy the bound
    \begin{align}
             \Var\left[\sum_{l = 1}^{L_k} \left(\Jiapp_{k,l}\right)_x\right] &\leq  \sum_j \lambda_{ij} \gamma^2 \left(\nsup{j}(t_{k}) - \nsup{i}(t_{k})\right)_x^2 dt  \\
            	&\leq  M^2 \gamma^2 \, dt  \sum_j \lambda_{ij} \leq M^2 \ltot \gamma^2 \, dt \,. \nonumber
        \end{align}

The error due to the stochasticity of the interactions is
\begin{equation} \label{eq:etakexch}
    \etai{\,exchange}{k} = \sum_{l = 1}^{L_k} \Jiapp_{k,l} - \E\left[\sum_{l = 1}^{L_k} \Jiapp_{k,l}\right] \, .
\end{equation}
Therefore,
\begin{equation} \label{eq:sumjeta}
    \sum_{l = 1}^{L_k} \Jiapp_{k,l} = \etai{\,exchange}{k} + \E\left[\sum_{l = 1}^{L_k} \Jiapp_{k,l}\right] = \etai{\,exchange}{k} + \sum_j \lambda_{ij} \gamma \left(\nsup{j}(t_{k}) - \nsup{i}(t_{k})\right)dt \,. 
\end{equation}
The error \cref{eq:etakexch} satisfies
\begin{equation}
    \norminf{\etai{\,exchange}{k}} = \max_x \left\{\;\normod{\sum_{l=1}^{L_k} \left(\Jiapp_{k,l}\right)_x - \E\left[\sum_{l=1}^{L_k} \left(\Jiapp_{k,l}\right)_x\right]} \; \right\} \,.
\end{equation}
Let $\kappa = \dim\left(\nsup{i}\right)$ and $\alpha = \sqrt{\frac{3 \kappa T}{\varepsilon \, dt}}$. By Chebyshev's inequality,
\begin{align}
    \Pr\left(\;\normod{\sum_{l = 1}^{L_k} \left(\Jiapp_{k,l}\right)_x - \E\left[\sum_{l = 1}^{L_k} \left(\Jiapp_{k,l}\right)_x\right]} \geq \alpha \sqrt{\Var\left[\sum_{l = 1}^{L_k} \left(\Jiapp_{k,l}\right)_x\right]} \;\right) &\leq \frac{1}{\alpha^2} \,, \\
    \Pr\left(\;\normod{\sum_{l = 1}^{L_k} \left(\Jiapp_{k,l}\right)_x - \E\left[\sum_{l = 1}^{L_k} \left(\Jiapp_{k,l}\right)_x\right]} \geq \alpha M \gamma \sqrt{\ltot dt} \; \right) &\leq \frac{1}{\alpha^2} \, , \nonumber\\
    \Pr\left(\;\normod{\sum_{l = 1}^{L_k} \left(\Jiapp_{k,l}\right)_x - \E\left[\sum_{l = 1}^{L_k} \left(\Jiapp_{k,l}\right)_x\right]} \geq M \gamma \sqrt{\frac{3\kappa T \ltot}{\varepsilon}} \; \right) &\leq \frac{\varepsilon \, dt}{3 \kappa T} \, . \nonumber
\end{align}
Therefore, 
\begin{equation} \label{eq:err exchange}
    \Pr\left(\;\norminf{\etai{\,exchange}{k}} <  M \gamma \sqrt{\frac{3 \kappa T \ltot}{\varepsilon}} \; \right) > 1 - \frac{\varepsilon \, dt}{3 T} \, .
\end{equation}

Inserting \cref{eq:sumjeta} into \cref{eq:sum ex} yields
\begin{align} \label{eq:insert3}
        \sum_{l = 1}^{L_k} \Ji_{k,l} &= \sum_{l = 1}^{L_k} \left(\Jiapp_{k,l} + \etai{\,exchange}{k,l}\right)  \\
        			&=\sum_j \lambda_{ij} \gamma \left(\nsup{j}(t_{k}) - \nsup{i}(t_{k})\right) dt + \sum_{l = 1}^{L_k} \etai{\,exchange}{k,l} + \etai{\,exchange}{k} \,. \nonumber
\end{align}
Inserting \cref{eq:insert1}, \cref{eq:insert2}, and \cref{eq:insert3} into \cref{eq:error diff} yields
\begin{align}
    \Ei_{k + 1} - \Ei_{k} &= \left[\nsup{i}(t_{k+1}) - \nsup{i}(t_{k})\right] - \left[\nsupapp{i}(t_{k + 1}) - \nsupapp{i}(t_{k})\right] \\
    &\begin{aligned}
         &=  \gi\left(\nsup{i}\left(t_k\right) \right) dt + \sum_{l = 1}^{L_k+1} \etai{\,local}{k,l} \\
        		&\qquad + \sum_j \lambda_{ij} \gamma \left(\nsup{j}(t_{k}) - \nsup{i}(t_{k})\right) dt + \sum_{l = 1}^{L_k} \etai{\,exchange}{k,l} + \etai{\,exchange}{k} \\ 
        		&\qquad - \left[\gi\left(\nsupapp{i}(t_k)\right) + \sum_j \lambda_{ij} \gamma \left(\nsupapp{j}(t_k) - \nsupapp{i}(t_k)\right)\right] dt - \etai{\,approx}{k} 
    \end{aligned} \nonumber \\
    &\begin{aligned}
        &=  \gi\left(\nsup{i}\left(t_k\right)\right) dt - \gi\left(\nsupapp{i}(t_k)\right) dt \\
        		&\qquad + \sum_j \lambda_{ij} \gamma \left(\nsup{j}(t_{k}) - \nsup{i}(t_{k})\right) dt - \sum_j \lambda_{ij} \gamma \left(\nsupapp{j}(t_k) - \nsupapp{i}(t_k)\right)dt \\
        		&\qquad + \sum_{l = 1}^{L_k+1} \etai{\,local}{k,l} + \sum_{l=1}^{L_k} \etai{\,exchange}{k,l} + \etai{\,exchange}{k} - \etai{\,approx}{k} \,.
    \end{aligned} \nonumber
\end{align}
Therefore,
\begin{align}
        \norminf{\Ei_{k + 1}} \leq & \norminf{\Ei_{k}} + \norminf{\gi\left(\nsup{i}\left(t_k\right)\right) - \gi\left(\nsupapp{i}(t_k)\right)}  dt \\
        &+ \sum_j \lambda_{ij} \gamma \left(\norminf{\nsup{j}(t_{k}) - \nsupapp{j}(t_k)} + \norminf{\nsup{i}(t_{k}) - \nsupapp{i}(t_k)}\right) dt \nonumber\\
        &+ \sum_{l = 1}^{L_k + 1} \norminf{\etai{\,local}{k,l}} + \sum_{l = 1}^{L_k} \norminf{\etai{\,exchange}{k,l}} + \norminf{\etai{\,exchange}{k}} + \norminf{\etai{\,approx}{k}} \nonumber \,.
    \end{align}

The inequality \cref{eq:err exchange} gives a bound for $\norminf{\etai{\,exchange}{k}}$ that holds with probability larger than $1 - \frac{\varepsilon \, dt}{3 T}$. The inequalities \cref{eq:etalockl} and \cref{eq:etaexchkl} give bounds for the error terms $\norminf{\etai{\,local}{k,l}}$ and $\norminf{\etai{\,exchange}{k,l}}$, respectively. The latter two bounds depend on the number $L_k$ of interactions in the interval $(t_k, t_{k + 1})$. As we discussed above, $L_k$ is stochastic and is distributed as a Poisson random variable with mean $\ltot dt$. Therefore, we give bounds for $L_k$ that hold with high probability. To obtain these bounds, we first define
\begin{equation}
    \beta = \sqrt{\frac{3 T}{\varepsilon \, dt}} \,.
\end{equation}
By Chebyshev's inequality,
\begin{align}
    \Pr\left(L_k \geq \ltot dt + \beta \sqrt{\ltot dt} \right) &\leq \frac{1}{\beta^2} \,, \\
    \Pr\left(L_k \geq \ltot  dt +  \sqrt{\frac{3 T \ltot}{\varepsilon} } \right) &\leq \frac{\varepsilon \, dt}{3 T} \,. \nonumber
    \end{align}
Therefore, with probability at least $1 - \frac{\varepsilon \, dt}{3 T}$, we have the bounds
\begin{equation}
    L_k \leq \ltot dt +  \sqrt{\frac{3 T \ltot}{\varepsilon} } \,.\\
\end{equation}
and
\begin{align} \label{eq:bp 2}
    L_k \gamma &\leq \ltot \gamma \, dt +  \sqrt{\gamma} \sqrt{\frac{3 T \ltot \gamma}{\varepsilon}} \\
    &\leq \ltot \gamma \, dt +  2 dt^2 \sqrt{\frac{3 T \ltot \gamma}{\varepsilon}} \nonumber\\
    &\leq \left(\ltot \gamma +  2\sqrt{\frac{3 T \ltot \gamma}{\varepsilon}}\right) \, dt \,, \nonumber
\end{align}
where we use the inequalities $\gamma < 4 dt^4$ and $dt < 1$. With probability at least $1 - \left(\frac{\varepsilon \, dt}{3T} + \frac{\varepsilon \, dt}{3T}\right) = 1 - \frac{2\varepsilon \, dt}{3T}$, the bounds in \cref{eq:err exchange} and \cref{eq:bp 2} both hold, yielding
\begin{align} \label{eq:ek1bound}
    \norminf{\Ei_{k + 1}}&
         \leq \norminf{\Ei_{k}} + C \norminf{\Ei_{k}} dt + \left(\norminf{E^{(j)}_{k}} + \norminf{\Ei_{k}}\right)\ltot \gamma \, dt \\
        		&\qquad + \sum_{l = 1}^{L_k+1}  \left(F \, dt + C G \, dt + C L_k M \gamma \right) dt_{k,l} \nonumber\\
        		&\qquad + \sum_{l = 1}^{L_k}\left(2 G \gamma \, dt + 2L_k M \gamma^2\right) + M \gamma \sqrt{\frac{3\kappa T \ltot}{\varepsilon}} + \widetilde F \, dt^2 \nonumber\\
    &
         \leq \left(1 + (C + \ltot \gamma) \, dt\right) \norminf{\Ei_{k}}  + \ltot \gamma \, dt \norminf{E^{(j)}_{k}} \nonumber\\
        		&\qquad + (F + CG) dt^2 + C L_k M \gamma \, dt + 2 G L_k \gamma \, dt \nonumber\\
        		&\qquad + 2 L_k^2 M \gamma^2 + M \sqrt{\gamma} \sqrt{\frac{3\kappa T \ltot \gamma}{\varepsilon}} + \widetilde F \, dt^2
    \nonumber \\
    &
        \leq  \left(1 + (C + \ltot \gamma) \, dt\right) \norminf{\Ei_{k}}  + \ltot \gamma \, dt \norminf{E^{(j)}_{k}} \nonumber\\
        		&\qquad + (F + CG) dt^2 + C M \left(\ltot \gamma +  2\sqrt{\frac{3 T \ltot \gamma}{\varepsilon}}\right) \, dt^2  \nonumber \\
        		&\qquad  + 2 G \left(\ltot \gamma +  2\sqrt{\frac{3 T \ltot \gamma}{\varepsilon}}\right) \, dt^2 + 2 M \left(\ltot \gamma +  2\sqrt{\frac{3 T \ltot \gamma}{\varepsilon}}\right)^2 \, dt^2 \nonumber \nonumber \\
        		&\qquad  + 2M \sqrt{\frac{3\kappa T \ltot \gamma}{\varepsilon}} \, dt^2 + \widetilde F \, dt^2 \,. \nonumber 
\end{align}
Grouping all of the prefactors of $dt^2$ into a single constant $Z$, we simplify \cref{eq:ek1bound} and write
\begin{equation}
    \norminf{\Ei_{k + 1}}  \leq   \left(1 + (C + \ltot \gamma) \, dt\right) \norminf{\Ei_{k}}  + \ltot \gamma \, dt \norminf{E^{(j)}_{k}} + Z \, dt^2 \, .
\end{equation}

Let
\begin{equation}
    E_k = \max_i \left\{\norminf{\Ei_k}\right\} 
\end{equation}
denote the maximum error at time $t_k$. The maximum error at time $0$ is $E_0 = 0$. Let $Y = (C + 2\ltot \gamma)$. The maximum error at time $t_{k + 1}$ satisfies the bound
\begin{equation} \label{eq:Ek bound}
    E_{k + 1}  \leq  (1 + Y \, dt) E_{k} + Z \, dt^2 \, .
\end{equation}
With probability at least $1 - \frac{T}{dt} \left(\frac{2\varepsilon \, dt}{3 T}\right) = 1 - \frac{2}{3\varepsilon} > 1 - \varepsilon$, the inequality \cref{eq:Ek bound} holds for all $k \in \{1,\ldots,\frac{T}{dt} - 1\}$. Therefore, with probability larger than $1-\varepsilon$, the maximum error at time $t_k$ satisfies
\begin{align}
    \norminf{E_{k}} &\leq Z \, dt^2 \sum_{k'=0}^{k - 1} (1 + Y \, dt)^{k'} \\
    &= Z \, dt^2 \left(\frac{(1 + Y \, dt)^k - 1}{(1 + Y \, dt) - 1}\right) \nonumber\\
    &\leq \frac{Z}{Y} \, dt \left(e^{Y \, dt}\right)^k \nonumber \\
     &\leq \frac{Z}{Y} \, dt \left(e^{Y \, dt}\right)^\frac{T}{dt}  \nonumber\\
    &\leq \frac{Ze^{Y T}}{Y} \, dt \nonumber
\end{align}
for all $k \in \{0,\ldots,\frac{T}{dt}\}$

Consider an arbitrary time $t' \in (t_k,t_{k+1})$. For some $l$, we have $t' \in [t_{k,l},t_{k,l + 1})$. It then follows that
\begin{align}
     \Ei (t') &= \nsup{i}(t') - \nsupapp{i}(t') \\
           &=  \nsup{i}(t') - \nsup{i}\left(t_{k,l}^+\right) + \nsup{i}\left(t_{k,l}^+\right) - \nsup{i}\left(t_{k,l}^-\right) \notag  \\
           &\qquad + \nsup{i}\left(t_{k,l}^-\right) - \nsup{i}(t_k) + \nsup{i}(t_k) - \nsupapp{i}(t_k) \notag \\ 
           &\qquad + \nsupapp{i}(t_k) - \nsupapp{i}(t')
       \,. \nonumber
       \end{align}
Therefore, 
\begin{align}
       \norminf{\Ei (t')} &
            \leq \norminf{\nsup{i}(t') - \nsup{i}\left(t_{k,l}^+\right)} + \norminf{\nsup{i}\left(t_{k,l}^+\right) - \nsup{i}\left(t_{k,l}^-\right)} \\
           &\qquad + \norminf{\nsup{i}\left(t_{k,l}^-\right) -\nsup{i}(t_k)}
           + \norminf{\nsup{i}(t_k) - \nsupapp{i}(t_k)} \nonumber\\
           &\qquad + \norminf{\nsupapp{i}(t_k) - \nsupapp{i}(t')}
      \nonumber\\
       &\leq \ G \left(t' - t_{k,l}\right) + M \gamma  + \norminf{\Ai_{k,l}} + \norminf{\Ei_k} + \widetilde G (t'-t_k)
        \nonumber\\
       &\leq \ G \, dt + 4M \, dt^2  + (G \, dt + L_k M \gamma) + \frac{Z e^{Y T}}{Y} \, dt + \widetilde G \, dt
        \nonumber\\
       &\leq \ \left(2G + 4M + \frac{Z e^{Y T}}{Y} + \widetilde G\right) \, dt + M \left(\ltot \gamma +  2\sqrt{\frac{3 T \ltot \gamma}{\varepsilon}}\right) \, dt
       \, , \nonumber\\
       &\leq \ \left(2G + 4M + \frac{Ze^{YT}}{Y} + \widetilde G + M \ltot \gamma + 2M \sqrt{\frac{3 T \ltot \gamma}{\varepsilon}}\right) \, dt 
       \, . \nonumber
\end{align}
Because $dt < \gamma^\frac{1}{4}$, we have
\begin{equation}
    \norminf{\Ei (t')} < W \gamma^\frac{1}{4} \,,
\end{equation}
where
\begin{equation}
    W = \left(2G + 4M + \frac{Z e^{YT}}{Y}+ \widetilde G + M \ltot \gamma + 2M \sqrt{\frac{3 T \ltot \gamma}{\varepsilon}}\right) \, .
\end{equation}
With the inequality
\begin{equation}
    \gamma \leq \frac{\delta^4}{2W} \, ,
\end{equation}
it follows for all $i$ and all times $t \in [0,T]$ that
\begin{equation}
    \norminf{\Ei(t)} \leq \delta 
\end{equation}
with probability larger than $1 - \varepsilon$.

\end{proof}


\section{Proof of High-Frequency Constant-Strength Approximation Theorem} \label{sec:HFCSA Proof}

In this appendix, we prove the HFCSA Theorem (see \cref{thm:HFCSA}).

\begin{theorem}[High-Frequency, Constant-Strength Approximation Theorem]
  Fix the relative interaction-frequency parameters $l_{ij}$, the interaction strength $\gamma > 0$, and a time $T$. Suppose that each local-dynamics function $g^{(i)}$ is Lipschitz continuous and bounded (see \cref{sec:LD}). Let $\varepsilon \in (0,1]$, $\delta > 0$, and $\eta > 0$ be arbitrary but fixed constants. For sufficiently large $\ltot$, each host microbiome abundance vector $\nsup{i}(t)$ satisfies
  \begin{equation}\tag{\ref{eq:HFCSA bound}} 
      \normLinf{\nsup{i} - \widetilde{\bm{N}}}{[\eta,T]} < \delta
  \end{equation}
with probability larger than $1 - \varepsilon$, where
  \begin{align}\tag{\ref{eq:HFCSA}}
         \frac{\der \widetilde{\bm{N}}}{\der t} &= \frac{1}{|H|} \sum_{j = 1}^{|H|} g^{(j)} \left(\widetilde{\bm{N}}\right) \,, \\
         \widetilde{\bm{N}}(0) &= \overline{\bm{N}}(0) \,. \nonumber
\end{align}
\end{theorem}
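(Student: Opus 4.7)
The plan is to split the target bound via the triangle inequality
\[
    \normLinf{\nsup{i} - \widetilde{\bm{N}}}{[\eta,T]} \leq \normLinf{\nsup{i} - \overline{\bm{N}}}{[\eta,T]} + \normLinf{\overline{\bm{N}} - \widetilde{\bm{N}}}{[\eta,T]}
\]
and to control the two summands separately. The first summand captures the rapid stochastic synchronization of the microbiome abundance vectors around their mean $\overline{\bm{N}}(t)$, while the second captures the agreement between the exact evolution \cref{eq:dermean} of $\overline{\bm{N}}$ and the deterministic mean-field surrogate \cref{eq:HFCSA}.

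For the first summand, I would introduce the dispersion $V(t) = \sum_i \normtwo{\nsup{i}(t) - \overline{\bm{N}}(t)}^2$. Because $\overline{\bm{N}}$ is preserved under interactions (see \cref{eq:Ex no d}), a short computation shows that an interaction between hosts \hsupa{i} and \hsupa{j} changes $V$ by exactly $-2\gamma(1-\gamma)\normtwo{\nsup{i} - \nsup{j}}^2$. Summing over potential interactions weighted by their rates gives the expected exchange-induced contraction
\[
    \E\!\left[\frac{dV}{dt}\right]_{\mathrm{exch}} = -2\gamma(1-\gamma)\sum_{i<j}\lambda_{ij}\normtwo{\nsup{i} - \nsup{j}}^2 \leq -2\gamma(1-\gamma)\,\ltot\,\widetilde{\mu}\, V(t) \,,
\]
where $\widetilde{\mu} > 0$ is the smallest positive eigenvalue (Fiedler eigenvalue) of the fixed weighted Laplacian with entries $l_{ij}$, assuming the interaction network is connected. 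Lipschitz continuity of each $\gi$ together with boundedness (see \cref{sec:LD}) lets me bound the local-dynamics growth of $V$ by $C'\sqrt{V}$, which is swamped by the exchange contraction for large $\ltot$. Following the template of the HFLSA proof in \cref{sec:HFLSA Proof}, I would partition $[0,T]$ into short subintervals, bound the per-interval expected change in $V$ and its variance via the law of total variance (each interaction contributes a bounded change of size at most $2\gamma M^2$, and the number of interactions in a subinterval is Poisson with mean $\ltot\, dt$), and apply Chebyshev's inequality together with a union bound over subintervals to conclude that $V(t)$ stays close to its expected exponentially decaying envelope with probability larger than $1 - \varepsilon/2$. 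For sufficiently large $\ltot$, that envelope lies below any prescribed $\xi > 0$ on $[\eta,T]$, and so $\normtwo{\nsup{i}(t) - \overline{\bm{N}}(t)} \leq \sqrt{\xi}$ for every $i$ with the stated probability.

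For the second summand, subtracting the two evolution equations and invoking Lipschitz continuity yields
\[
    \normtwo{\frac{d(\overline{\bm{N}} - \widetilde{\bm{N}})}{dt}} \leq C_L \normtwo{\overline{\bm{N}} - \widetilde{\bm{N}}} + \frac{C_L}{\sqrt{|H|}}\sqrt{V(t)} \,,
\]
where $C_L$ is a common Lipschitz constant of the $\gi$. Because $\overline{\bm{N}}(0) = \widetilde{\bm{N}}(0)$, Gr\"onwall's inequality gives $\normtwo{\overline{\bm{N}}(t) - \widetilde{\bm{N}}(t)} \leq C_L e^{C_L T} \int_0^t \sqrt{V(s)/|H|}\, ds$, and the high-probability exponentially-decaying bound on $V$ from the first step shows this integral is $O(1/\ltot)$. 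Hence $\normLinf{\overline{\bm{N}} - \widetilde{\bm{N}}}{[0,T]} \to 0$ as $\ltot \to \infty$ on an event of probability at least $1-\varepsilon/2$. Choosing $\ltot$ large enough that both pieces of the triangle-inequality decomposition lie below $\delta/2$ closes the argument.

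The main obstacle will be promoting the expected exponential contraction of $V$ into a genuine high-probability uniform bound on $[\eta,T]$, because $V$ is a jump process driven by Poisson-type interactions rather than a smooth function and thus cannot be controlled by ODE comparison alone. The interval $[0,\eta]$ is excluded from the uniform bound precisely because the synchronization requires a short burn-in period; the length $\eta$ is arbitrary, but something must be subtracted from $[0,T]$ to allow the microbiome abundance vectors to relax toward $\overline{\bm{N}}$ before they can be uniformly close to $\widetilde{\bm{N}}$. A secondary subtlety is verifying that the null space of the weighted Laplacian is spanned by the all-ones vector so that the quadratic-form inequality $\bm{v}^T L \bm{v} \geq \ltot \widetilde{\mu}\,\normtwo{\bm{v}}^2$ applies coordinatewise to the centered deviation vectors; this reduces to requiring that the interaction network be connected.
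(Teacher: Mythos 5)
Your proposal is correct in outline but takes a genuinely different route to the key synchronization lemma. Both proofs share the same skeleton --- triangle inequality through $\overline{\bm{N}}$, a Lyapunov-type functional that interactions can only decrease, a partition of $[0,T]$ into short subintervals with Chebyshev plus a union bound, and a Gr\"onwall comparison between $\overline{\bm{N}}$ and $\widetilde{\bm{N}}$ --- and both implicitly require the interaction network to be connected, which you correctly flag. The difference is in how synchronization is extracted. The paper works with the edge-weighted Dirichlet energy $U(t) = \frac{1}{2}\sum_{i,j}\frac{\lambda_{ij}}{\ltot}\normtwo{\nsup{i}-\nsup{j}}^2$ and never invokes a spectral gap: it argues by contradiction that if $U > \xi/3$ throughout a subinterval, then at every interaction time some adjacent pair has a large squared difference, so with probability at least $l_{\mathrm{min}}$ each interaction decreases $U$ by a fixed amount; summing the roughly $\ltot\,dt$ decrements would drive $U$ negative. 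Monotonicity of $U$ under interactions (each jump is nonpositive) then propagates the bound forward, and connectivity enters only afterwards, through a path-length argument converting adjacent-pair control into all-pair control. You instead use the centered dispersion $V(t)=\sum_i\normtwo{\nsup{i}-\overline{\bm{N}}}^2$ and the Fiedler eigenvalue of the weighted Laplacian to get an expected exponential contraction $-2\gamma(1-\gamma)\ltot\widetilde{\mu}\,V$. Your route buys a quantitative decay rate, an explicit burn-in time of order $(\ltot)^{-1}\log\ltot$, and the sharper $O(1/\ltot)$ estimate on $\normLinf{\overline{\bm{N}}-\widetilde{\bm{N}}}{[0,T]}$ (the paper only gets a bound proportional to $\sqrt{\xi}$); the paper's route buys a more elementary argument that sidesteps concentration for a state-dependent multiplicative jump process entirely.

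The one place where your sketch is materially harder than you may anticipate --- and you do flag it --- is promoting the expected contraction of $V$ into a pathwise high-probability bound. The per-interaction decrement $-2\gamma(1-\gamma)\normtwo{\nsup{i}-\nsup{j}}^2$ is state-dependent and can vanish for an individual interaction even when $V$ is large, so the increments within a subinterval are neither independent nor identically distributed, and Chebyshev applied naively to their sum does not yield exponential decay of $V$ itself (the contraction is multiplicative, not additive). You will need either a supermartingale/log-transform argument or, more simply, the paper's device: on each subinterval, show that $V$ either is already below the target or, with probability at least $l_{\mathrm{min}}$ per interaction, suffers a decrement bounded below by a fixed positive quantity whenever $V$ exceeds the target, and then use the fact that interactions never increase $V$ to carry the bound to the endpoint. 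Also note that the local-dynamics drift of $V$ is $O(\sqrt{V})$, which dominates the contraction $O(\ltot V)$ once $V \lesssim \ltot^{-2}$; this does not break your argument (the resulting equilibrium level still vanishes as $\ltot\to\infty$), but the phrase ``swamped by the exchange contraction'' should be qualified accordingly.
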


\begin{proof}
Each local-dynamics function $g^{(i)}$ is bounded (see \cref{sec:LD}), so there exists a constant $M$ such that each entry of $\nsup{i}(t)$ is nonnegative and
\begin{equation}
 	\norminf{\nsup{i}(t)} \leq M   
\end{equation}
for each microbiome abundance vector $\nsup{i}(t)$ and all times $t \geq 0$.

The approximate microbiome abundance vector $\widetilde{\bm{N}}t)$ is the mean of all $\nsup{i}(t)$. The dynamics of $\widetilde{\bm{N}}(t)$ \cref{eq:HFCSA} is given by the mean of all hosts' local dynamics. Because each local-dynamics function $\gi$ is bounded, so is the mean of each $\gi$. Therefore, for all times $t \geq 0$, each entry of $\widetilde{\bm{N}}(t)$ is nonnegative and
\begin{equation}
	 \norminf{\widetilde{\bm{N}}(t)} \leq M   \,.
\end{equation}

We assume that each local-dynamics function $\gi$ is Lipschitz continuous. Therefore, there exists a constant $C$ such that 
\begin{equation}
    \norminf{g^{(i)}\left(\bm{x}\right) - g^{(i)}\left(\bm{y}\right)} \leq C \norminf{\bm{x} - \bm{y}}
\end{equation}
for each $\gi$ and all $\bm{x},\bm{y} \in[0,M]^{n}$. Because each local-dynamics function $\gi$ is continuous (which follows from their Lipschitz continuity) and each microbiome abundance vector $\nsup{i}(t)$ is in the compact region $[0,M]^{n}$, there exists a constant $G$ such that \begin{equation} \label{eq:LD Bound 3}
        \norminf{\frac{\der \nsup{i}}{\der t}} = \norminf{g^{(i)}\left(\nsup{i}\right)} \leq G 
\end{equation}
for all $\nsup{i}(t)$. For an interaction involving host \hsupa{i} that occurs at time $t_I$, we choose $\nsup{i}\left(t_I\right) = \nsup{i}\left(t_I^+\right)$. Therefore, $\nsup{i}(t)$ is right-continuous at time $t_I$. It is usually not left-continuous at time $t_I$, so it is usually not left-differentiable at time $t_I$.\footnote{The only situation where $\nsup{i}(t)$ is left-continuous at time $t_I$ occurs when the host \hsupa{j} with which \hsupa{i} interacts has a microbiome vector $\nsup{j}(t_I^-) = \nsup{i}(t_I^-)$.} In such situations, the derivative that we use in \cref{eq:LD Bound 3} is a right derivative.

Define the Dirichlet energy
\begin{equation}
    U(t) = \frac{1}{2} \sum_{i,j} \frac{\lambda_{ij}}{\ltot} \normtwo{\nsup{i}(t) - \nsup{j}(t)}^2 \,,
\end{equation}
which is nonnegative by construction. Let $\xi > 0$ be arbitrary but fixed. We will show that $U(t) \leq \xi$ with probability larger than $1 - \varepsilon$ for sufficiently large $\ltot$ and all times $t \in [\eta,T]$.

Between interactions,
\begin{equation}
    \frac{\der U}{\der t} =  \sum_{i,j} \frac{\lambda_{ij}}{\ltot} \left(\gi\left(\nsup{i}\right) -g^{(j)}\left(\nsup{j}\right)\right) \cdot \left(\nsup{i} - \nsup{j}\right) \,.
\end{equation}
Therefore,
\begin{align} \label{eq:dU bound}
    \normod{\frac{\der U }{\der t}} &\leq  \sum_{i,j} \frac{\lambda_{ij}}{\ltot} (2G) M  \\
    	&= 4 G M \nonumber \,.
\end{align}

Fix a $dt > 0$ such that ${T}/{dt}$ is an integer and
\begin{equation} \label{eq:deltabound}
    dt < \max\left\{\eta, \frac{\xi}{12 G M}, \frac{\delta}{2 C M e^{C T}} \right\} \,.
\end{equation}
Let $t_k = k \, dt$. There are only finitely many $t_k$ in the interval $[0,T]$. The probability that an interaction occurs precisely at any of these $t_k$ is $0$. Therefore, for the remainder of this proof, we only consider interactions that occur at times 
$t \neq t_k$ for any $k$. Under this assumption,
\begin{equation}
    U(t_k^-) = U(t_k) 
\end{equation}
for each $t_k \in [0,T]$.

We now consider how the Dirichlet energy $U(t)$ changes over an interval $[t_k,t_{k + 1}]$. Let $L_k$ be the number of interactions that occur in $(t_k,t_{k + 1})$. (This interval is open because no interactions occur at any of the $t_k$.) We denote the associated ordered set of interactions by $\{t_{k,l}\}_{l = 1}^{L_k}$. Additionally, we let $t_{k,0} = t_k$ and $t_{k,L_k + 1} = t_{k + 1}$, and we define $dt_{k,l} = t_{k,l} - t_{k,l - 1}$. For $l \in \{1,\ldots,L_k\}$, let
\begin{equation}
    W_{k,l} = U\left(t_{k,l}^+\right) - U\left(t_{k,l}^-\right) \,.
\end{equation}
The difference $W_{k,l}$ is the change in $U$ due to an interaction at time $t_{k,l}$.

We decompose the change in $U(t)$ over the interval $[t_k,t_k+1]$ by writing
\begin{align} \label{eq:Udecomp}
        U(t_{k+1}) - U(t_{k}) &= \sum_{l = 1}^{L_k + 1} \left[U\left(t_{k,l}^-\right) - U\left(t_{k,l - 1}^-\right)\right]  \\
        		&= \sum_{l = 1}^{L_k + 1} \left[U\left(t_{k,l}^-\right) - U\left(t_{k,l-1}^+\right)\right] + \sum_{l = 2}^{L_k + 1} \left[U\left(t_{k,l-1}^+\right) - U\left(t_{k,l-1}^-\right)\right]  \nonumber \\
        		&= \sum_{l = 1}^{L_k + 1} \left[U\left(t_{k,l}^-\right) - U\left(t_{k,l-1}^+\right)\right] + \sum_{l = 1}^{L_k} W_{k,l} \,.  \nonumber
    \end{align}
The magnitude of the first sum in \cref{eq:Udecomp} has the upper bound
\begin{align}
    \normod{\sum_{l = 1}^{L_k+1} \left[U\left(t_{k,l}^-\right) - U\left(t_{k,l-1}^+\right)\right]} &\leq \sum_{l = 1}^{L_k+1}\normod{\left[U\left(t_{k,l}^-\right) - U\left(t_{k,l-1}^+\right)\right]} \\
    	&\leq \sum_{l=1}^{L_k + 1} 4 G M \, dt_{k,l} \nonumber \\
    	&= 4 G M \, dt \,. \nonumber
\end{align}

We now consider the sum $\sum_{l=1}^{L_k} W_{k,l}$. If the interaction at time $t_{k,l}$ is between hosts \hsupa{i} and \hsupa{j}, then
\begin{align}
    W_{k,l} &= \frac{\lambda_{ij}}{\ltot} \left[\normtwo{\nsup{i}\left(t_I^+\right) - \nsup{j}\left(t_I^+\right)}^2 - \normtwo{\nsup{i}\left(t_I^-\right) - \nsup{j}\left(t_I^-\right)}^2\right] \ \\
	    &= \frac{\lambda_{ij}}{\ltot} \left[\begin{aligned} 
	    	&\normtwo{(1 - \gamma) \nsup{i}\left(t_I^-\right) + \gamma \nsup{j}\left(t_I^-\right) - (1 - \gamma) \nsup{j}\left(t_I^-\right) - \gamma \nsup{i}\left(t_I^-\right)}^2 \ \\ 
	    &\quad- \normtwo{\nsup{i}\left(t_I^-\right) - \nsup{j}\left(t_I^-\right)}^2\end{aligned}\right] \nonumber  \\
	    &= \frac{\lambda_{ij}}{\ltot} \left[(1 - 2\gamma)^2 \normtwo{\nsup{i}\left(t_I^-\right) - \nsup{j}\left(t_I^-\right)}^2 - \normtwo{\nsup{i}\left(t_I^-\right) - \nsup{j}\left(t_I^-\right)}^2\right] \nonumber  \\
	    &= -4\gamma (1 - \gamma) \frac{\lambda_{ij}}{\ltot} \normtwo{\nsup{i}\left(t_I^-\right) - \nsup{j}\left(t_I^-\right)}^2\nonumber \,.
\end{align}
Regardless of which hosts interact, each $W_{k,l}$ is always nonpositive. We now show by contradiction that the Dirichlet energy $U(t) \leq {\xi}/{3}$ with probability at least $1 - \frac{\varepsilon \, dt}{2T}$ for sufficiently large total-interaction-frequency parameter $\ltot$ and some time $t \in [t_k,t_{k + 1}]$.

Suppose that $U(t) > {\xi}/{3}$ for all times $t \in [t_k,t_{k + 1}]$. For all $t \in [t_k,t_{k + 1}]$, there are then some $i$ and $j$ such that
\begin{equation}
    \frac{\lambda_{ij}}{\ltot} \normtwo{\nsup{i}\left(t\right) - \nsup{j}\left(t\right)}^2 > \frac{\xi}{3 |H|^2} \,.
\end{equation}
Therefore, at each time $t_{k,l}^-$ immediately before an interaction, there is some $i$ and $j$ such that
\begin{equation}
    \frac{\lambda_{ij}}{\ltot} \normtwo{\nsup{i}\left(t_{k,l}^-\right) - \nsup{j}\left(t_{k,l}^-\right)}^2 > \frac{\xi}{3 |H|^2} \,.
\end{equation}

Let
\begin{equation}
    l_\text{min} = \min_{i,j} \left\{ l_{ij} \mid l_{ij} > 0 \right\} \,.
\end{equation}
An interaction at time $t_{k,l}$ occurs between hosts \hsupa{i} and \hsupa{j} with probability $l_{ij} \geq l_\text{min}$. Therefore, regardless of the previous interactions, each $W_{k,l}$ satisfies
\begin{equation}
    W_{k,l} < -\frac{4 \gamma (1-\gamma) \xi}{3 |H|^2}
\end{equation}
with probability at least $l_\text{min}$.

We define a random variable $\mathcal{W}$ such that
\begin{align}
    \Pr\left(\mathcal{W} = 0\right) &= 1 - l_\text{min} \,, \\
    \Pr\left(\mathcal{W} = -\frac{4 \gamma (1 - \gamma) \xi}{3 |H|^2}\right) &= l_\text{min} \nonumber \,.
\end{align}
For all $w \leq 0$, we have
\begin{equation} \label{eq:sumWcomp}
    \Pr\left(\sum_{l = 1}^{L_k} W_{k,l} < w\right) \geq \Pr\left(\sum_{l = 1}^{L_k} \mathcal{W} < w\right)\,.
\end{equation}

We seek to bound
\begin{equation} \label{eq:sumW}
    \sum_{l = 1}^{L_k} \mathcal{W} \,.
\end{equation} 
The random variable $\mathcal{W}$ is stochastic, so we can only find a bound for \cref{eq:sumW} that holds with some probability. The first two moments of $\mathcal{W}$ are
\begin{align}
    \E\left[\mathcal{W}\right] &= -l_\text{min} \frac{4 \gamma (1-\gamma) \xi}{3 |H|^2} \,, \\
     \E\left[\mathcal{W}^2\right] &= l_\text{min} \left(\frac{4 \gamma (1-\gamma) \xi}{3 |H|^2}\right)^2 \,. \nonumber
\end{align}
The expectation of the sum \cref{eq:sumW} is
\begin{align}
    \E \left[\sum_{l=1}^{L_k} \mathcal{W}\right] &= \E\left[\;\E\left[\sum_{l = 1}^{L_k} \mathcal{W} \;\middle|\; L_k\right]\;\right]  \\
    	&= \E \left[L_k \left(-l_\text{min} \frac{4 \gamma (1 - \gamma) \xi}{3 |H|^2}\right) \right] \nonumber\\
    	&= -\ltot  dt \, l_\text{min} \frac{4 \gamma (1 - \gamma) \xi}{3 |H|^2}\,. \nonumber
    \end{align}
Applying the law of total variance, the variance of the sum \cref{eq:sumW} is
\begin{align}
        \Var\left[\sum_{l = 1}^{L_k} \mathcal{W}\right] &= \E\left[\; \Var \left[\sum_{l = 1}^{L_k} \mathcal{W}\;\middle|\; L_k \right]\; \right] + \Var\left[ \; \E \left[\sum_{l = 1}^{L_k} \mathcal{W}\;\middle|\; L_k \right]\; \right] \\
        		&= \E\left[L_k \Var \left[\mathcal{W} \right]\; \right] + \Var\left[ \; L_k \E\left[\mathcal{W} \right]\; \right] \nonumber\\
        		&= \ltot   dt \, \Var\left[\mathcal{W} \right] + \ltot  dt \left(\E\left[\mathcal{W} \right]\right)^2 \nonumber\\
        		&=  \ltot  dt \, \E\left[\mathcal{W}^2\right] \nonumber\\
        		&=  \ltot  dt \, l_\text{min} \left(\frac{4 \gamma (1 - \gamma) \xi}{3 |H|^2}\right)^2  \,. \nonumber
\end{align}

Let $\alpha = \sqrt{\frac{2 T}{\varepsilon \, dt}}$. By Chebyshev's inequality,
\begin{align} \label{eq:chebW}
    \Pr\left(\;\sum_{l = 1}^{L_k} \mathcal{W} \geq  \E\left[\sum_{l = 1}^{L_k} \mathcal{W}\right] + \alpha \sqrt{\Var\left[\sum_{l = 1}^{L_k} \mathcal{W}\right]} \;\right) &\leq \frac{1}{\alpha^2} \,, \\
    \Pr\left(\;\sum_{l = 1}^{L_k} \mathcal{W} \geq -\ltot  dt  \, l_\text{min} \frac{4 \gamma (1 - \gamma) \xi}{3 |H|^2} + \sqrt{\frac{2 T \ltot l_\text{min}}{\varepsilon}} \frac{4 \gamma (1 - \gamma) \xi}{3 |H|^2} \;\right) &\leq \frac{\varepsilon \, dt}{2 T} \,, \nonumber\\
    \Pr\left(\;\sum_{l = 1}^{L_k} \mathcal{W} < -\left(\ltot  dt \, l_\text{min} - \sqrt{\frac{2 T \ltot l_\text{min}}{\varepsilon}} \right) \frac{4 \gamma (1-\gamma) \xi}{3 |H|^2} \;\right) &\geq 1 - \frac{\varepsilon \, dt}{2 T} \,. \nonumber
\end{align}
Using the bound \cref{eq:sumWcomp} in \cref{eq:chebW} gives
\begin{align}
    \Pr\left(\;\sum_{l = 1}^{L_k} W_{k,l} < -\left(\ltot  dt  \, l_\text{min} - \sqrt{\frac{2 T \ltot l_\text{min}}{\varepsilon}} \right) \frac{4 \gamma (1 - \gamma) \xi}{3 |H|^2} \;\right) &> 1- \frac{\varepsilon \, dt}{2 T} \,, \\
    \Pr\left(\;\sum_{l = 1}^{L_k} W_{k,l} < -\sqrt{\ltot} \left(\sqrt{\ltot} dt \, l_\text{min} - \sqrt{\frac{2 T l_\text{min}}{\varepsilon}} \right)\frac{4 \gamma (1-\gamma) \xi}{3 |H|^2} \;\right) &> 1- \frac{\varepsilon \, dt}{2 T} \,. \nonumber
\end{align}
By choosing sufficiently large $\ltot$, we can make $\sqrt{\ltot} \left(\sqrt{\ltot} dt \, l_\text{min} - \sqrt{\frac{2 T l_\text{min}}{\varepsilon}} \right)\frac{4 \gamma (1-\gamma) \xi}{3 |H|^2}$ arbitrarily large. In particular, we choose a sufficiently large $\ltot$ so that
\begin{equation}
    \sqrt{\ltot} \left(\sqrt{\ltot} dt  l_\text{min} - \sqrt{\frac{2 T l_\text{min}}{\varepsilon}} \right)\frac{4 \gamma (1 - \gamma) \xi}{3 |H|^2} \geq U(0) + 4 G M T + 4 G M \, dt \,.
\end{equation}
Therefore,
\begin{equation} \label{eq:sumWbound}
    \Pr\left(\;\sum_{l = 1}^{L_k} W_{k,l} < -\left[U(0) + 4 G M T+ 4 G M \, dt\right] \;\right) > 1 - \frac{\varepsilon \, dt}{2 T} \,.
\end{equation}

Local dynamics can cause the Dirichlet energy $U(t)$ to change at a rate of at most $4 G M$ per unit time (see \cref{eq:dU bound}). Additionally, because each $W_{k,l}$ is nonpositive, interactions cannot cause $U$ to increase. Therefore, for any time $t \in [0,T]$, we have $U(t) < U(0) + 4 G M T$. Combining this upper bound, the bound \cref{eq:sumWbound}, and the decomposition of $U(t_{k + 1}) - U(t_k)$ in \cref{eq:Udecomp} yields
\begin{align}
    U(t_{k + 1}) &\leq U(t_{k}) + \normod{\sum_{l = 1}^{L_k + 1} \left[U\left(t_{k,l}^-\right) - U\left(t_{k,l - 1}^+\right)\right]} + \normod{\sum_{l = 1}^{L_k} W_{k,l}} \\
    	&< U(0) + 4 G M T + 4 G M \, dt - \left[U(0) + 4 G M T + 4 G M \, dt\right] \nonumber \\
    	&< 0 
\end{align}
with probability larger than $1 - \frac{\varepsilon \, dt}{2 T}$. However, $U(t)$ is always nonnegative. Therefore, with probability larger than $1 - \frac{\varepsilon \, dt}{2 T}$, we have a contradiction and there is some time $t' \in [t_k,t_{k + 1}]$ such that $U(t') \leq {\xi}/{3}$. For some $l'$, we have $t' \in [t_{k,l'},t_{k,l'+1})$. Therefore, 
\begin{align}
    U(t_{k + 1}) &= U(t') + U(t_{k,l' + 1}) - U(t') + \sum_{l = l' + 2}^{L_k + 1} \left[U\left(t_{k,l}^-\right) - U\left(t_{k,l - 1}^+\right)\right] + \sum_{l = l' + 1}^{L_k} W_{k,l} \\
    	&\leq U(t') + \normod{U(t_{k,l' + 1}) - U(t')} + \sum_{l = l' + 2}^{L_k + 1} \normod{U\left(t_{k,l}^-\right) - U\left(t_{k,l - 1}^+\right)} + \normod{\sum_{l = l' + 1}^{L_k} W_{k,l}} \nonumber \\    
    	&\leq \frac{\xi}{3} + 4 G M \, dt_{k,l' + 1} + \sum_{l = l' + 2}^{L_k + 1} 4 G M \, dt_{k,l} \nonumber \\
    	&\leq \frac{\xi}{3} + 4 G M \, dt \,.
    \end{align}

Recall that $dt < \frac{\xi}{12 G M}$ (see \cref{eq:deltabound}). We have
\begin{align}
    U(t_{k+1}) < \frac{\xi}{3} + \frac{\xi}{3} =  \frac{2\xi}{3} \nonumber \,.
\end{align}
With probability larger than $1 - \frac{\varepsilon \, dt}{2T}\left(\frac{T}{dt}\right) = 1 - \frac{\varepsilon}{2} > 1 - \varepsilon$, every $U(t_k)$ except $U(0)$ satisfies $U(t_k) < {2\xi}/{3}$. Consider an arbitrary time $t' \in [t_k,t_{k + 1}]$, where $k \geq 1$. For some $l'$, we have $t' \in [t_{k,l'},t_{k,l' + 1})$. Therefore,
\begin{equation}
    U(t') = U(t')-U(t_{k,l'}) +  \sum_{l = 1}^{l'} \left[U\left(t_{k,l}^-\right) - U\left(t_{k,l-1}^+\right)\right] + \sum_{l = 1}^{l'} W_{k,l} + U(t_k) \,,
\end{equation}
which implies that
\begin{align} \label{eq:xiboundU}
    U(t') & \leq \normod{U(t') - U(t_{k,l'})} + \sum_{l = 1}^{l'} \normod{U\left(t_{k,l}^-\right) - U\left(t_{k,l - 1}^+\right)} + U(t_k) \\
    	&\leq 4 G M \, dt_{k,l' + 1} + \sum_{l = 1}^{l'} 4 G M \, dt_{k,l} + U(t_k) \nonumber\\
    	&\leq 4 G M \, dt + U(t_k) \nonumber\\
    	&< \frac{\xi}{3} + \frac{2\xi}{3} \nonumber\\
    	&= \xi \,. \nonumber
\end{align}

With probability larger than $1 - \varepsilon$, we have $U(t) < \xi$ for all times $t \in [dt,T]$. Because $\xi$ is arbitrary, by choosing a sufficiently large total-interaction-frequency parameter $\ltot$, we can make $U(t)$ arbitrarily small over the interval $[dt,T]$ with probability larger than $1 - \varepsilon$. Assume that $U(t) < \xi$ for all $t \in [dt,T]$. We can then bound the maximum difference between any microbiome abundance vectors $\nsup{i}(t)$ and $\nsup{j}(t)$. In particular, if \hsupa{i} and \hsupa{j} are adjacent, then
\begin{align}
    \frac{\lambda_{ij}}{\ltot} \normtwo{\nsup{i}(t) - \nsup{j}(t)}^2 &< \xi \,, \\
    \normtwo{\nsup{i}(t) - \nsup{j}(t)} &< \sqrt{l_\text{max}\xi} \,, \nonumber
\end{align}
where $l_\text{max} = \max_{i,j}  l_{ij}$. A shortest path between any two hosts in the interaction network has length at most $|H | -1$. Therefore, the 2-norm of the difference between each pair of microbiome abundance vectors $\nsup{i}(t)$ and $\nsup{j}(t)$ satisfies
\begin{equation}\label{eq:ninjbound}
    \normtwo{\nsup{i}(t) - \nsup{j}(t)} < \left(|H| - 1\right)\sqrt{l_\text{max}\xi} \,.
\end{equation}
The inequality \cref{eq:ninjbound} guarantees that the difference between each microbiome abundance vector $\nsup{i}(t)$ and the mean microbiome abundance vector $\overline{\bm{N}}(t)$ satisfies
\begin{align} \label{eq:ninmbound}
    \normtwo{\nsup{i}(t) - \overline{\bm{N}}(t)} &< \left(|H| - 1\right)\sqrt{l_\text{max}\xi} \,, \\
    \norminf{\nsup{i}(t) - \overline{\bm{N}}(t)} &< \left(|H| - 1\right)\sqrt{l_\text{max}\xi} \,. \notag
\end{align}

We now bound the magnitude of the difference between the mean microbiome abundance vector $\overline{\bm{N}}(t)$ and the approximate microbiome abundance vector $\widetilde{\bm{N}}(t)$ for all times $t \in [0,T]$. We refer to $\norminf{\overline{\bm{N}}(t) - \widetilde{\bm{N}}(t)}$ as the \emph{approximation--mean error} at time $t$. At time $0$, the approximation--mean error $\norminf{\overline{\bm{N}}(0) - \widetilde{\bm{N}}(0)} = 0$ by construction. We bound the approximation--mean error by first bounding its time derivative, which satisfies
\begin{align}
    \frac{\der}{\der t}\norminf{\overline{\bm{N}} - \widetilde{\bm{N}}} &\leq \norminf{\frac{\der}{\der t}\left(\overline{\bm{N}} - \widetilde{\bm{N}}\right)} \\
   	 &= \norminf{\frac{1}{|H|} \sum_{j = 1}^{|H|} g^{(j)}\left(\nsup{j}\right) - \frac{1}{|H|} \sum_{j = 1}^{|H|} g^{(j)}\left(\widetilde{\bm{N}}\right)} \nonumber \\
   	 &\leq \frac{1}{|H|} \sum_{j = 1}^{|H|} \norminf{g^{(j)}\left(\nsup{j}\right) - g^{(j)}\left(\widetilde{\bm{N}}\right)}\nonumber \\
   	 &\leq \frac{1}{|H|} \sum_{j = 1}^{|H|} C \norminf{\nsup{j} - \widetilde{\bm{N}}}\nonumber \\
   	 &\leq \frac{1}{|H|}\sum_{j = 1}^{|H|} C \norminf{\nsup{j} - \widetilde{\bm{N}}} \nonumber\\
   	 &\leq \frac{1}{|H|} \sum_{j = 1}^{|H|} C \left( \:\norminf{\nsup{j} - \overline{\bm{N}}} + \norminf{\overline{\bm{N}} - \widetilde{\bm{N}}}\:\right)\nonumber \\
   	 &\leq C \norminf{\overline{\bm{N}} - \widetilde{\bm{N}}} + \frac{1}{|H|} \sum_{j = 1}^{|H|} C \norminf{\nsup{j} - \overline{\bm{N}}} \,. \nonumber
\end{align}

For times $t \in [dt,T]$, each $\norminf{\nsup{j}(t) - \overline{\bm{N}}(t)} < \left(|H| - 1\right) \sqrt{l_\text{max} \xi}$. Therefore, on this interval,
\begin{equation}
    \frac{\der}{\der t}\norminf{\overline{\bm{N}} - \widetilde{\bm{N}}} <  C \norminf{\overline{\bm{N}} - \widetilde{\bm{N}}} + \left(|H| - 1\right)\sqrt{l_\text{max} \xi} \,.
\end{equation}
For $t \in [0,dt]$, we obtain a weaker bound for the time derivative of the approximation--mean error:
\begin{align}
    \frac{\der}{\der t}\norminf{\overline{\bm{N}} - \widetilde{\bm{N}}} &\leq \frac{1}{|H|} \sum_{j = 1}^{|H|} C \norminf{\nsup{j} - \widetilde{\bm{N}}} \\
    &\leq \frac{1}{|H|} \sum_{j = 1}^{|H|} C M \nonumber \\
    &= C M \nonumber \, .
\end{align}
Therefore, for times $t \in [0,dt]$, the approximation--mean error satisfies
\begin{align}
     \norminf{\overline{\bm{N}}(t) - \widetilde{\bm{N}}(t)} &\leq \norminf{\overline{\bm{N}}(0) - \widetilde{\bm{N}}(0)} + C M t \\
    		&\leq C M \, dt \nonumber \\
    		&\leq \frac{\delta}{2 e^{C T}} \,. \nonumber
\end{align}
For times $t \in [dt,T]$, we construct a function $u(t)$ that gives an upper bound of the \linebreak
approximation--mean error. This function $u(t)$ is the solution of the dynamical system
\begin{align}
    \frac{\der u}{\der t} &= C u + \left(|H| - 1\right)\sqrt{l_\text{max} \xi} \,, \\
    u(dt) &= \frac{\delta}{2 e^{C T}} \,. \nonumber
\end{align}
Using an integrating factor to solve for $u(t)$ yields
\begin{align}
     e^{-C t} \frac{\der u}{\der t} &= e^{-C t} C u + e^{-C t} \left(|H| - 1\right) \sqrt{l_\text{max} \xi} \,, \\
     \frac{\der}{\der t} \left(e^{-C t} u\right) &= e^{-C t}\left(|H| - 1\right) \sqrt{l_\text{max} \xi} \,, \nonumber \\
     e^{-C t} u(t) &= e^{-C dt}\left(\frac{\delta}{2 e^{C T}}\right)+\int_{dt}^t  e^{-C t'} \left(|H| - 1\right) \sqrt{l_\text{max}\xi} \ \der t' \,, \nonumber \\
     u(t) &= e^{C (t - dt)} \left(\frac{\delta}{2 e^{C T}}\right) + \frac{\left(|H| - 1\right) \sqrt{l_\text{max} \xi}}{C} \left(e^{C(t - dt)} - 1\right) \,. \nonumber
\end{align}

For $t \in [dt,T]$, the function $u(t)$ satisfies
\begin{align}
    u(t) &\leq e^{C T} \left(\frac{\delta}{2e^{C T}}\right) + \frac{\left(|H| - 1\right) \sqrt{l_\text{max} \xi}}{C} e^{C T} \\
   		 &= \frac{\delta}{2} + \frac{\left(|H| - 1\right) \sqrt{l_\text{max} \xi}}{C} e^{C T} \,. \nonumber
\end{align}
Therefore,
\begin{equation} \label{eq:nmnabound}
    \norminf{\overline{\bm{N}}(t) - \widetilde{\bm{N}}(t)} \leq \frac{\delta}{2} + \frac{\left(|H|-1\right)\sqrt{l_\text{max} \xi}}{C} e^{C T} \,.
\end{equation}
As we discussed previously (see \cref{eq:xiboundU}), we can make $\xi$ arbitrarily small by choosing a sufficiently large $\ltot$. Therefore, we make $\xi$ sufficiently small so that
\begin{equation} \label{eq:xibound}
    \left(|H| - 1\right) \sqrt{l_\text{max} \xi} < \max \left\{\frac{\delta}{4}, \frac{\delta C}{4 e^{C T}} \right\} \,.
\end{equation} 
Using the bound \cref{eq:xibound} in \cref{eq:nmnabound} and \cref{eq:ninmbound} yields
\begin{align}
    \norminf{\overline{\bm{N}}(t) - \widetilde{\bm{N}}(t)} &< \frac{\delta}{2} + \frac{\delta C}{4 e^{C T}} \left(\frac{e^{C T}}{C}\right) = \frac{3\delta}{4} \,, \\
    \norminf{\nsup{i}(t) - \overline{\bm{N}}(t)} &< \frac{\delta}{4} \notag
\end{align}
for every $t \in [dt,T]$ with probability larger than $1 - \varepsilon$. Because $dt \leq \eta$, for all times $t \in [\eta,T]$, the difference between each microbiome abundance vector $\nsup{i}(t)$ and the approximate microbiome abundance vector $\widetilde{\bm{N}}(t)$ satisfies
\begin{align}
    \norminf{\nsup{i}(t) - \widetilde{\bm{N}}(t)} &\leq \norminf{\nsup{i}(t) - \overline{\bm{N}}(t)} + \norminf{\overline{\bm{N}}(t) - \widetilde{\bm{N}}(t)} \\
    &< \frac{\delta}{4} + \frac{3\delta}{4} = \delta \nonumber
\end{align}
with probability larger than $1 - \varepsilon$.

\end{proof}


\section*{Acknowledgements}

We thank Kedar Karhadkar, Christopher Klausmeier, Thomas Koffel, Elena Litchman, Sarah Tymochko, and Jonas Wickman for helpful discussions.



\begin{thebibliography}{10}

\bibitem{ADAIR17}
{\sc K.~L. Adair and A.~E. Douglas}, {\em Making a microbiome: {T}he many
  determinants of host-associated microbial community composition}, Current
  Opinion in Microbiology, 35 (2017), pp.~23--29.

\bibitem{fblec}
{\sc F.~Bullo}, {\em Lectures on Network Systems}, Kindle Direct Publishing,
  {1.7}th~ed., 2024.
\newblock Available at \url{https://fbullo.github.io/lns}.

\bibitem{airway}
{\sc E.~D. Christensen, M.~H. Hjelms{\o}, J.~Thorsen, S.~Shah, T.~Redgwell,
  C.~E. Poulsen, U.~Trivedi, J.~Russel, S.~Gupta, B.~L. Chawes,
  K.~B{\o}nnelykke, S.~J. S{\o}rensen, M.~A. Rasmussen, H.~Bisgaard, and
  J.~Stokholm}, {\em The developing airway and gut microbiota in early life is
  influenced by age of older siblings}, BMC Microbiome, 10 (2022), 106.

\bibitem{Sponge}
{\sc D.~F.~R. Cleary, T.~Swierts, F.~J. R.~C. Coelho, A.~R.~M. Pol{\'o}nia,
  Y.~M. Huang, M.~R.~S. Ferreira, S.~Putchakarn, L.~Carvalheiro, E.~van~der
  Ent, J.-P. Ueng, N.~C.~M. Gomes, and N.~J. de~Voogd}, {\em The sponge
  microbiome within the greater coral reef microbial metacommunity}, Nature
  Communications, 10 (2019), 1644.

\bibitem{Cui24}
{\sc W.~Cui, R.~M. III, and P.~Mehta}, {\em Les Houches lectures on community
  ecology: From niche theory to statistical mechanics}, arXiv:2403.05497,
  (2024), \url{https://arxiv.org/abs/2403.05497}.

\bibitem{Keshet}
{\sc L.~Edelstein-Keshet}, {\em Mathematical Models in Biology}, Society for
  Industrial and Applied Mathematics, Philadelphia, PA, USA, 2005.

\bibitem{Feller1950-fh}
{\sc W.~Feller}, {\em An Introduction to Probability Theory and Its
  Applications. Volume {I}}, John Wiley \& Sons, Inc,, Oxford, UK, 3rd~ed.,
  1968.

\bibitem{MetaSDEC}
{\sc M.~Holyoak, M.~A. Leibold, and R.~D. Holt}, {\em Metacommunities:
  {S}patial Dynamics and Ecological Communities}, University of Chicago Press,
  Chicago, IL, USA, 2005.

\bibitem{Hou22}
{\sc K.~Hou, Z.-X. Wu, X.-Y. Chen, J.-Q. Wang, D.~Zhang, C.~Xiao, D.~Zhu, J.~B.
  Koya, L.~Wei, J.~Li, and Z.-S. Chen}, {\em Microbiota in health and
  diseases}, Nature Signal Transduction and Targeted Therapy, 7 (2022), 135.

\bibitem{mental_health}
{\sc C.-S. Kim, G.-E. Shin, Y.~Cheong, J.~Shin, D.-M. Shin, and W.~Y. Chun},
  {\em Experiencing social exclusion changes gut microbiota composition},
  Nature Translational Psychiatry, 12 (2022), 254.

\bibitem{Leibold04}
{\sc M.~A. Leibold, M.~Holyoak, N.~Mouquet, P.~Amarasekare, J.~M. Chase, M.~F.
  Hoopes, R.~D. Holt, J.~B. Shurin, R.~Law, D.~Tilman, M.~Loreau, and
  A.~Gonzalez}, {\em The metacommunity concept: A framework for multi-scale
  community ecology}, Ecology Letters, 7 (2004), pp.~601--613.

\bibitem{KlausLV}
{\sc B.~A. Lerch, A.~Rudrapatna, N.~Rabi, J.~Wickman, T.~Koffel, and C.~A.
  Klausmeier}, {\em Connecting local and regional scales with stochastic
  metacommunity models: Competition, ecological drift, and dispersal},
  Ecological Monographs, 93 (2023), e1591.

\bibitem{Loreau03}
{\sc M.~Loreau, N.~Mouquet, and A.~Gonzalez}, {\em Biodiversity as spatial
  insurance in heterogeneous landscapes}, Proceedings of the National Academy
  of Sciences of the United States of America, 100 (2003), pp.~12765--12770.

\bibitem{MacKay05}
{\sc D.~MacKay}, {\em Information Theory, Inference, and Learning Algorithms},
  Cambridge Univesity Press, Cambridge, UK, 2005.

\bibitem{Miller2018}
{\sc E.~T. Miller, R.~Svanb{\"a}ck, and B.~J. Bohannan}, {\em Microbiomes as
  metacommunities: Understanding host-associated microbes through metacommunity
  ecology}, Trends in Ecology {\&} Evolution, 33 (2018), pp.~926--935.

\bibitem{Interaction}
{\sc J.~Mossong, N.~Hens, M.~Jit, P.~Beutels, K.~Auranen, R.~Mikolajczyk,
  M.~Massari, S.~Salmaso, G.~S. Tomba, J.~Wallinga, J.~Heijne,
  M.~Sadkowska-Todys, M.~Rosinska, and W.~J. Edmunds}, {\em Social contacts and
  mixing patterns relevant to the spread of infectious diseases}, PLoS
  Medicine, 5 (2008), e74.

\bibitem{Moquet02}
{\sc N.~Mouquet and M.~Loreau}, {\em Coexistence in metacommunities: {T}he
  regional similarity hypothesis.}, The American Naturalist, 159 (2002),
  pp.~420--426.

\bibitem{Mouquet03}
{\sc N.~Mouquet and M.~Loreau}, {\em Community patterns in source‐sink
  metacommunities.}, The American Naturalist, 162 (2003), pp.~544--557.

\bibitem{Newman}
{\sc M.~Newman}, {\em Networks}, Oxford University Press, Oxford, UK,
  second~ed., 2018.

\bibitem{porter2016}
{\sc M.~A. Porter and J.~P. Gleeson}, {\em Dynamical Systems on Networks: {A}
  Tutorial}, vol.~4, Springer International Publishing, Cham, Switzerland,
  2016.

\bibitem{mice}
{\sc A.~Raulo, B.~E. Allen, T.~Troitsky, A.~Husby, J.~A. Firth, T.~Coulson, and
  S.~C.~L. Knowles}, {\em Social networks strongly predict the gut microbiota
  of wild mice}, The ISME Journal, 15 (2021), pp.~2601--2613.

\bibitem{AnimalSurvey}
{\sc A.~Sarkar, S.~Harty, K.~V.-A. Johnson, A.~H. Moeller, E.~A. Archie, L.~D.
  Schell, R.~N. Carmody, T.~H. Clutton-Brock, R.~I.~M. Dunbar, and P.~W.~J.
  Burnet}, {\em Microbial transmission in animal social networks and the social
  microbiome}, Nature Ecology \& Evolution, 4 (2020), pp.~1020--1035.

\bibitem{disease_resistance}
{\sc A.~Sarkar, C.~J.~A. McInroy, S.~Harty, A.~Raulo, N.~G.~O. Ibata,
  M.~Valles-Colomer, K.~V.-A. Johnson, I.~L. Brito, J.~Henrich, E.~A. Archie,
  L.~B. Barreiro, F.~S. Gazzaniga, B.~B. Finlay, E.~V. Koonin, R.~N. Carmody,
  and A.~H. Moeller}, {\em Microbial transmission in the social microbiome and
  host health and disease}, Cell, 187 (2024), pp.~17--43.

\bibitem{einstein}
{\sc C.~Stover and E.~W. Weisstein}, {\em Einstein summation},
  \url{https://mathworld.wolfram.com/EinsteinSummation.html}.
\newblock From MathWorld---A Wolfram Web Resource. Available at
  \url{https://mathworld.wolfram.com/EinsteinSummation.html} (accessed 23 April
  2025).

\bibitem{Thompson20}
{\sc P.~L. Thompson, L.~M. Guzman, L.~De~Meester, Z.~Horv{\'a}th, R.~Ptacnik,
  B.~Vanschoenwinkel, D.~S. Viana, and J.~M. Chase}, {\em A process-based
  metacommunity framework linking local and regional scale community ecology},
  Ecology Letters, 23 (2020), pp.~1314--1329.

\bibitem{baboon}
{\sc J.~Tung, L.~B. Barreiro, M.~B. Burns, J.-C. Grenier, J.~Lynch, L.~E.
  Grieneisen, J.~Altmann, S.~C. Alberts, R.~Blekhman, and E.~A. Archie}, {\em
  Social networks predict gut microbiome composition in wild baboons}, eLife, 4
  (2015), e05224.

\bibitem{Valdes18}
{\sc A.~M. Valdes, J.~Walter, E.~Segal, and T.~D. Spector}, {\em Role of the
  gut microbiota in nutrition and health}, BMJ, 361 (2018), k2179.

\bibitem{Wang17}
{\sc B.~Wang, M.~Yao, L.~Lv, Z.~Ling, and L.~Li}, {\em The human microbiota in
  health and disease}, Nature Signal Transduction and Targeted Therapy, 3
  (2017), pp.~71--82.

\end{thebibliography}



\end{document}